\begin{document}

\newtheorem{theo}{Theorem}[section]
\newtheorem{lem}[theo]{Lemma}
\newtheorem{prop}[theo]{Proposition}
\newtheorem{conj}[theo]{Conjecture}
\newtheorem{coro}[theo]{Corollary}
\newtheorem{sublem}[theo]{Sublemma}
\newtheorem*{IntroTheo}{Theorem}

\newcommand{\lr}{\longrightarrow}
\newcommand{\p}{\pi^{ab}_1}
\newcommand{\coker}{\mathrm{coker}}
\newcommand{\im}{\mathrm{im}}
\newcommand{\chara}{\mathrm{char}}
\newcommand{\Z}{\mathbb{Z}}
\newcommand{\Gal}{\mathrm{Gal}}
\newcommand{\Aut}{\mathrm{Aut}}
\newcommand{\DG}{\mathrm{DG}}
\newcommand{\ord}{\mathrm{ord}}

\theoremstyle{definition}
\newtheorem{defi}[theo]{Definition}

\theoremstyle{remark}
\newtheorem{rem}[theo]{Remark}
\newtheorem{example}[theo]{Example}

\title{Higher class field theory and the connected component}
\author{Moritz Kerz}
\date{25.11.2010}

\thanks{The author is supported by the { DFG-Emmy Noether programme}}
\maketitle

\begin{abstract}
In this note we present a new self-contained approach to the class field theory of arithmetic schemes in the sense of Wiesend. Along the way we prove new results on space filling curves on arithmetic schemes and on the class field theory of local rings. We show how one can deduce the more classical version of higher global class field theory due to Kato and Saito from Wiesend's version. One of our new results says that the connected component of the identity element in Wiesend's class group is divisible if some obstruction is absent.
\end{abstract}

\section*{Introduction}

\noindent The main aim of higher global class field theory is to determine the abelian fundamental group $\p (X)$ of
a regular arithmetic scheme $X$, i.e.~of a connected regular separated scheme flat and of finite type over $\Z$, in terms of an arithmetically defined
class group $C(X)$. In case $\dim(X)=1$ this is classically done in terms of restricted idele groups as follows:

Let $K$ be a number field and $S$ a finite set of places of $K$ containing all infinite places. Let $X$ be the complement of $S$ in $Spec(\mathcal{O}_K)$ as an open subscheme. Define the class group of $X$ to be $$C(X)=\mathrm{coker} [ K^\times \lr \bigoplus_{x\in |X|} \mathbb{Z} \oplus \bigoplus_{v\in S} K^\times_v    ]$$
with the quotient topology of the direct sum topology.  Here $K_v$ is the completion of $K$ at $v$ and $|X|$ is the set of closed points of $X$.
During the first half of the 20th century it was shown that there exists a surjective reciprocity homomorphism $\rho:C(X)\to \p(X)$ whose kernel
is the maximal divisible subgroup of $C(X)$ and which induces a bijection between the open subgroups of $C(X)$ and the open subgroups of $\p(X)$.
In our setting it is this fundamental theorem that one wants to generalize to higher dimensional schemes $X$.
 
For $\dim(X)>1$ a solution to this problem was suggested by Parshin \cite{Par}
and completed by Kato and Saito in~\cite{KS}. Roughly, their solution involves higher Milnor $K$-groups of higher local fields in the definition of the class group as analogs of the multiplicative group of a one-dimensional local field. 
A different approach to higher class field theory was started in~\cite{SchSp} and built into a self-contained approach by Wiesend in~\cite{W4} 
and~\cite{Wclass}.
A completed presentation of Wiesend's ideas was given in~\cite{KeSch}. 

For arithmetic schemes the fundamental result in this latter setting of higher class field theory -- which is the topic of this note -- can be stated as follows:
First, for an arithmetic scheme $X$ one defines a topological group $C(X)$, the class group, together with a
continuous homomorphism $\rho:C(X)\to \p(X)$, the reciprocity map. This is done in Section~4. Sections~5 to~9 are concerned with 
the proof of the fundamental theorem:

\begin{IntroTheo}
For a regular arithmetic scheme $X$ the sequence
\[
0\lr C(X)^0 \lr C(X) \stackrel{\rho}{\lr} \p (X) \lr 0
\]
is a topological short exact sequence. Here $C(X)^0$ is the connected component of the identity in $C(X)$.
Furthermore, if $X$ is proper over some open subscheme of $Spec(\mathcal{O}_K)$ for some number field $K$ then the group $C(X)^0$ is the maximal divisible subgroup of $C(X)$.
\end{IntroTheo}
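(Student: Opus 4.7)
The plan is to reduce the theorem to classical one-dimensional class field theory via curves sitting inside $X$, in the spirit of Wiesend. I would decompose the topological short exact sequence into three subclaims: (i) surjectivity of $\rho$, (ii) the identification $\ker\rho = C(X)^0$, and (iii) that the induced map $C(X)/C(X)^0 \to \p(X)$ is a homeomorphism. The divisibility assertion is a separate statement which I would address after exactness by computing $C(X)^0$ explicitly from the local contributions.

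For (i) I would invoke a Chebotarev-type density result: the Frobenius elements attached to the closed points $x \in |X|$ generate a dense subgroup of the profinite group $\p(X)$, and continuity of $\rho$ combined with compactness of $\p(X)$ upgrades density to surjectivity. One half of (ii), namely $C(X)^0 \subseteq \ker\rho$, is automatic, since $\p(X)$ is profinite hence totally disconnected, and so the connected component must map to the identity under the continuous map $\rho$.

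The heart of the argument is the reverse inclusion $\ker\rho \subseteq C(X)^0$, which together with (iii) is equivalent to showing that every open finite-index subgroup $N \subseteq C(X)$ is the preimage of an open subgroup of $\p(X)$. Given such $N$, the task is to construct a finite abelian \'etale cover $Y \to X$ whose reciprocity data realises $C(X)/N$. Here the space-filling curves theorem (one of the new results of the paper) is decisive: for any finite amount of class-group data visible in $C(X)/N$, it produces a curve $C \hookrightarrow X$ passing through the relevant closed points whose one-dimensional class field theory already detects the cover. Classical one-dimensional CFT on $C$ then yields a finite abelian \'etale cover, and the Wiesend-style patching formalism assembles the covers coming from different curves into a single cover of $X$. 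The main technical obstacle is to verify that this patched cover is well-defined and independent of the chosen family of curves; this is precisely where one needs the flexibility of the space-filling construction, in order to compare any two candidate curves on a third common one.

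For the divisibility assertion, properness of $X$ over an open $U \subseteq Spec(\mathcal{O}_K)$ confines the horizontal local contributions to the finite set $S = Spec(\mathcal{O}_K)\setminus U$. I expect $C(X)^0$ to coincide with the image in $C(X)$ of the archimedean identity components $\mathbb{R}_{>0}$, respectively $\mathbb{C}^\times$, inside $K_v^\times$ for $v\in S$ archimedean; the non-archimedean factors $K_v^\times$ are totally disconnected, and the $\mathbb{Z}$-contributions from closed points are discrete, so neither contributes to $C(X)^0$. The archimedean components are divisible, hence so is their image $C(X)^0$. Maximality is then formal: any divisible subgroup of $C(X)$ projects to a divisible subgroup of the profinite group $C(X)/C(X)^0 \cong \p(X)$, which must be trivial by a standard intersection argument, and therefore lies inside $C(X)^0$.
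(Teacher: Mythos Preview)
Your outline has the right overall shape, but there are three concrete gaps, two of them serious.

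\textbf{Surjectivity.} Density of the image of $\rho$ together with compactness of $\p(X)$ does \emph{not} give surjectivity: the inclusion $\Z\hookrightarrow\hat{\Z}$ is the standard counterexample. What is actually needed is compactness of $C(X)/C(X)^0$, and this is not formal. In the paper it is obtained by reducing, via an \'etale cover and the finiteness theorem, to the situation of a smooth fibration $X\to U\subset Spec(\mathcal{O}_K)$ admitting a section, and then invoking Katz--Lang finiteness of $\ker[\p(X)\to\p(U)]$ together with one-dimensional class field theory for $U$.

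\textbf{Existence and finiteness.} You phrase the key step as ``every open finite-index subgroup $N\subset C(X)$ is the preimage of an open subgroup of $\p(X)$'', but the finite-index hypothesis is exactly what has to be proved: for $\dim X>1$ it is a deep theorem (the Finiteness Theorem) that open subgroups of $C(X)$ have finite index at all. Your patching-of-curves sketch does not address this. The paper's route is rather different from what you describe: one first proves a \emph{weak} existence theorem under the extra hypothesis that $C(X)/N$ has finite exponent, by showing (``effacability'') that after an \'etale base change around the generic point $N$ becomes all of $C(X')$; this step uses elementary fibrations and base change for $R^1\pi_*(\Z/m)$, not space-filling curves. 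The Bloch approximation/space-filling result enters only in the Isomorphism Theorem and in the subsequent Finiteness Theorem, where a delicate Chebotarev counting argument with an $l$-Bloch point removes the finite-exponent hypothesis.

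\textbf{Divisibility.} Your computation of $C(X)^0$ is not correct. The idele group $I(X)$ involves local contributions $\mathbf{k}(C)_v^\times$ for \emph{every} curve $C\subset X$ and every place $v\in C_\infty$, not merely for the places of the base field $K$; properness over $U$ only ensures that no positive-characteristic local fields appear. Moreover $C(X)^0$ is the \emph{closure} of the image of $I(X)^0$, not the image itself, and the quotient $\overline{R(X)}/R(X)$ (in the paper's notation) is generally nonzero. Example~\ref{examcon} shows that without the properness hypothesis $C(X)^0$ can fail to be divisible precisely because of this closure phenomenon. The paper's proof that $\overline{R(X)}/R(X)$ is divisible uses an exhaustion by finite unions of curves and, crucially, the fact that the unit groups $\mathcal{O}_{K_v}^\times$ for $v$ non-archimedean of characteristic $0$ are noetherian $\hat{\Z}$-modules; this noetherianity is what makes the limit argument go through and is exactly what fails in the counterexample. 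Your argument that ``archimedean components are divisible, hence so is their image'' misses all of this.
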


Our principal aim for this note was to give a new, short and direct proof of this fundamental theorem of Wiesend's higher class field theory without using the notion of covering data resp.~covering problems, which is used in \cite{Wclass} and \cite{KeSch}. 
One of the major differences of our approach is that we prove the isomorphism theorem first and use it in the proof of the existence theorem.
The statement on the divisibility of the connected component is new, see Section~5. Furthermore,
we show in Section~10 that this fundamental theorem implies the results of Kato and Saito on arithmetic schemes as presented in~\cite[Theorem 6.1]{Raskind}.
One should remark that in the case of varieties over finite fields we can describle at present only the tame part of the abelian fundamental group using the approach 
of Wiesend. Here the approach of Kato and Saito seems to be indispensable in order to describe the wild part.

It is suggested to the reader who wants to gain an overview of higher class field theory to skip the first three sections -- which are quite technical -- for the first reading and start with Section~4. 

Part of this note grew out of a seminar on Wiesend's work which was held at the University of Regensburg. I would like to thank the participants of this seminar and especially Alexander Schmidt for many interesting discussions on higher class field theory and his comments on preliminary versions of this note. I thank Uwe Jannsen for his constant encouragement. The referee suggested helpful improvements, in particular for Example~\ref{examcon}.
\section{Background Material}

\subsection{Algebraic Geometry}

\subsubsection*{General definitions}

\begin{defi}
An arithmetic scheme is an integral separated normal scheme flat and of finite type over $\mathbb{Z}$.
\end{defi}
\begin{defi}
If $X$ is a scheme we call a closed subscheme $C\to X$ a curve if $C$ is
integral and one-dimensional. 
\end{defi}

For a scheme $X$ we denote by $|X|$ the set of closed points of $X$.

\subsubsection*{Elementary fibrations}
Usually proofs in higher class field theory use induction over the Krull dimension. A basic
ingredient will be Artin's elementary fibrations.
Let $X$ be an arithmetic scheme with $\dim(X)>1$. The next proposition shows that
\'etale locally around the generic point $X$ can be fibred into smooth curves over a regular base.

\begin{prop}\label{ElFib}
There exists a nontrivial arithmetic scheme $X'$ and an \'etale morphism $X'\to X$ such that $X'$ can be fibred as
follows: There exists an open immersion $X'\to \bar{X}'$ and a smooth projective morphism $\bar{\pi}: \bar{X}' \to W$
such that:\\
(i) $W$ is regular of dimension $\dim(X)-1$.\\
(ii) $\bar{X}'-X'$ with the reduced subscheme structure is isomorphic to a direct sum of copies of $W$.\\
(iii) There exists a section $s:W\to X'$ of $\bar{\pi}$.
\end{prop}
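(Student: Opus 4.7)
The plan is to adapt Artin's classical construction of elementary fibrations (SGA~4, Exp.~XI) to the arithmetic setting. Since the conclusion is \'etale-local near a generic point of $X$, the overall strategy is: (1)~fix a projective compactification of $X$; (2)~project from a carefully chosen linear centre to obtain a map that is ``almost'' an elementary fibration; (3)~pass to an \'etale neighborhood to clean up the remaining defects --- smoothness of the fibration, trivialization of the boundary, and existence of a section.

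Concretely, since $X$ is normal it is regular in codimension one, so after shrinking I may assume $X$ is regular. Choose a compactification $\bar X \subset \mathbb{P}^N_{\mathbb{Z}}$ and let $Y = \bar X \setminus X$ with its reduced structure. I pick a linear subspace $L \subset \mathbb{P}^N_{\mathbb{Z}}$ of codimension $d = \dim(X)$, disjoint from $\bar X$ and from a suitable ``bad locus'' inside $Y$, so that the induced projection $\bar\pi: \bar X \to \mathbb{P}^{d-1}_{\mathbb{Z}}$ is smooth over an open $W_0$ and $Y \to W_0$ is finite \'etale. Existence of such an $L$ is given by a Bertini-type argument: over the generic fibre $\mathrm{Spec}(\mathbb{Q})$ the classical Bertini theorem produces an abundance of good $L$'s, and spreading out yields an $L$ defined over a dense open of $\mathrm{Spec}(\mathbb{Z})$; the remaining finitely many primes will be absorbed into the final \'etale cover. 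Next, restrict $\bar\pi$ to a regular open $W \subset W_0$ over which $Y \to W$ is finite \'etale, and trivialize this cover by passing to the connected \'etale cover $W' \to W$ over which $Y \times_W W'$ splits as a disjoint union of copies of $W'$. The base change $\bar X' := \bar X \times_W W'$ then satisfies (i) and (ii) with complement $X' := \bar X' \setminus (Y \times_W W')$.

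For the section in~(iii), the sheets of $Y \times_W W'$ lie in the boundary rather than in $X'$, so I produce a section landing in $X'$ separately: either apply Bertini to an auxiliary hyperplane section to produce an additional marked component of $\bar\pi^{-1}(W')$ meeting each fibre transversally, or replace $W'$ by a further \'etale neighborhood in which a $W'$-point of $X'$ becomes available (this is possible because the generic fibre of $\bar\pi$ is a geometrically irreducible positive-dimensional smooth projective variety, and therefore acquires rational points after a finite separable extension). The main obstacle is the Bertini step over $\mathrm{Spec}(\mathbb{Z})$, since classical Bertini requires an infinite base field and fails over the residue fields at closed points: one must either invoke an arithmetic Bertini theorem in the style of Poonen--Gabber to secure a good $L$ already over $\mathbb{Z}$, or work generically over $\mathbb{Q}$ and then handle the finitely many bad primes by enlarging the \'etale cover --- in either case, one must check carefully that the three elementary-fibration conditions hold integrally over the final base $W'$, not merely at the generic fibre, and that the resulting scheme $X'$ remains a nontrivial arithmetic scheme in the sense of the paper.
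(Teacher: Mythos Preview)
The paper's own proof is literally one sentence: it cites SGA~4, Exp.~XI, 3.3 and nothing more. What makes the statement a ``simple corollary'' is the route you mention only as a fallback near the end: apply Artin's theorem to the generic fibre $X_{\mathbb{Q}}$---a smooth variety over a field, where classical Bertini is available---to obtain an elementary fibration on some \'etale neighborhood of the generic point of $X_{\mathbb{Q}}$, and then spread everything out over a dense open of $\mathrm{Spec}(\mathbb{Z})$. Conditions (i)--(iii) are all of finite presentation, so they persist after spreading out; and since the proposition only asks for a \emph{nontrivial} $X'$ with an \'etale map to $X$, you may simply discard the finitely many bad primes rather than ``handle'' them by enlarging the cover. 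No arithmetic Bertini (Poonen--Gabber) is needed at all.

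Your direct-over-$\mathbb{Z}$ reconstruction of Artin's argument is the harder road; it can be made to work, but it is more than the paper intends. One slip worth fixing if you pursue it: with $d=\dim(X)$ the Krull dimension (the paper's convention), projection from a centre of codimension $d$ in $\mathbb{P}^N_{\mathbb{Z}}$ lands in $\mathbb{P}^{d-1}_{\mathbb{Z}}$, which itself has Krull dimension $d$, so your $\bar\pi$ would be generically finite rather than a relative curve. You want the centre of codimension $d-1$, giving target $\mathbb{P}^{d-2}_{\mathbb{Z}}$ of Krull dimension $d-1$, so that the fibres are curves and $W$ has the correct dimension. (Equivalently: over $\mathbb{Q}$ you are fibring a $(d-1)$-fold into curves over a $(d-2)$-fold.)
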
 

\begin{proof} This is a simple corollary to \cite[XI 3.3]{SGA4-3}. \end{proof}

\subsubsection*{Fundamental group}
Let $\mathfrak{Gr}$ be the category whose objects are profinite groups and whose morphisms are continuous group homomorphisms modulo inner
automorphism, i.e.~ two group homomorphisms $f,g:A\to B$ give the same morphism in the category $\mathfrak{Gr}$ if there exists $b\in B$ with 
$b f b^{-1} =g$. Abelianization is a functor from the category $\mathfrak{Gr}$ to the category of 
abelian profinite groups. Let $A$ be a set and $B$ be a group. An equivalence class of maps from $A$ to $B$ up to inner automorphisms is called a subset up to inner automorphism if it corresponds to an injective map.

Grothendieck's fundamental group is a  covariant functor, denoted $\pi_1$, from the category of connected noetherian
schemes to $\mathfrak{Gr}$. The abelian fundamental group functor $\p$ is the composition of $\pi_1$ with the abelianization
functor described above.

For a connected noetherian scheme $X$ the open normal subgroups in $\pi_1(X)$ 
correspond bijectively to the isomorphism classes of (connected) Galois coverings of
$X$. A connected \'etale covering $Y\to X$ is called a Galois covering if $\# \Aut(Y/X)=\deg(Y/X)$.
The Galois covering corresponding to $V\le \pi_1(X)$ has Galois group $G=\pi_1(X)/V$.
 If $f:X'\to X$ is a
morphism of connected normal noetherian schemes  and $Y\to X$ is a Galois covering corresponding to
$V\le \pi_1(X)$ the pullback $Y_{X'}=X' \times_X Y$ is a disjoint union of Galois coverings each of which
has Galois group isomorphic to $G'=\pi_1(X') /f_*^{-1}(V)$.

For an arithmetic scheme $X$ and a closed point $x\in |X|$ we will denote the Frobenius at $x$ by $Frob_x$, which is an element of $\pi_1(X)$ up to conjugation. If $Y\to X$ is a Galois covering we call the subgroup up to conjugation of $\Gal(Y/X)$
generated by $Frob_x$ the decomposition group at the point $x$, denoted $\DG_{Y/X}(x)$.

For further reference recall:
\begin{lem}\label{fcov}
If $f:Y\to X$ is an \'etale morphism of arithmetic schemes there exists a dense open subscheme $U\subset X$ such that the
induced morphism $f^{-1}(U)\to U$ is finite.
\end{lem}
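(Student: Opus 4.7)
The plan is to reduce to Zariski's Main Theorem. Since $Y$ and $X$ are both separated over $\Z$, the morphism $f$ is separated. As $f$ is \'etale and of finite type, it is also quasi-finite (\'etale fibers are discrete). Zariski's Main Theorem (e.g.\ EGA~IV, 8.12.6) then supplies a factorization
\[
f \colon Y \stackrel{j}{\lr} \bar{Y} \stackrel{\bar{f}}{\lr} X
\]
with $j$ an open immersion and $\bar{f}$ finite. The first move I would make is to replace $\bar{Y}$ by the scheme-theoretic closure of $j(Y)$ in $\bar{Y}$; this is still finite over $X$ (closed subscheme of a finite $X$-scheme), and now $Y$ is open and dense in $\bar{Y}$.

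Set $Z := \bar{Y}\setminus Y$, equipped with the reduced subscheme structure. Because $Y$ is dense in $\bar{Y}$, no generic point of an irreducible component of $\bar{Y}$ lies in $Z$. Combined with the finiteness of $\bar{f}$, this is the key observation: the points of the generic fibre $\bar{Y}_\eta$ over the generic point $\eta$ of $X$ are precisely the generic points of those components of $\bar{Y}$ that dominate $X$, and all of these points lie in $Y$. Hence $Z\cap \bar{f}^{-1}(\eta)=\emptyset$, i.e.\ $\eta\notin \bar{f}(Z)$.

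Finally, since $\bar{f}$ is proper, $\bar{f}(Z)$ is closed in $X$, and by the previous step it is a proper closed subset of the irreducible $X$. Setting
\[
U := X\setminus \bar{f}(Z),
\]
we obtain a dense open subscheme of $X$. Over $U$ any point of $\bar{f}^{-1}(U)$ lies outside $Z$, hence inside $Y$, so $f^{-1}(U)=\bar{f}^{-1}(U)$, which is finite over $U$ as a base change of the finite morphism $\bar{f}$.

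The only subtle point is step two: for the argument to work, the image $\bar{f}(Z)$ must avoid $\eta$. I expect this to be the main obstacle — and it is precisely what forces the passage to the schematic closure of $Y$ in $\bar{Y}$, which ensures that $Z$ contains no generic point of $\bar{Y}$ and therefore no point lying over $\eta$.
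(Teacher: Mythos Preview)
The paper does not actually prove this lemma: it is introduced with ``For further reference recall'' and stated without argument, as a standard fact. Your proof via Zariski's Main Theorem is correct and is precisely the standard way to establish it; the only point worth making explicit is that the fibre $\bar f^{-1}(\eta)$ consists of generic points of components of $\bar Y$ because of incomparability for the finite (hence integral) morphism $\bar f$, which you use implicitly. With that granted, your identification of $U = X\setminus \bar f(Z)$ works exactly as written.
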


\subsubsection*{Etale cohomology}

Etale cohomology of a connected noetherian scheme $X$ is related to the fundamental group by 
$$Hom_{cont}(\pi_1(X) , \mathbb{Q} / \mathbb{Z} ) =  H^1(X,\mathbb{Q}/\mathbb{Z})\; . $$

We will need a base change result for relative curves.
Let $\bar{\pi}:\bar{X} \to W$ be a smooth proper relative curve with geometrically connected fibres, let $W$ a regular arithmetic scheme. 
Let $i:X\to \bar{X}$ be an open immersion such that
$\bar{X} -X$ is isomorphic to a direct sum of copies of $W$ and such that there exists a section $s:W\to X$.
Set $\pi = \bar{\pi} \circ i$.

\begin{prop}\label{EFiniteness}
The base change homomorphism
\[
(R^1 \pi_* (\mathbb{Z}/m) )_{\bar{w}}\tilde{\lr} H^1(X\times_W \bar{w} ,\mathbb{Z}/m )
\]
for a geometric point $\bar{w}$ of $W$ is an isomorphism for all $m\in \mathbb{N}$ which are invertible in $H^0(W,\mathcal{O}_W)$.
Moreover $R^1 \pi_* (\mathbb{Z}/m)$ is locally constant.
\end{prop}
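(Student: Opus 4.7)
The strategy is to reduce the statement for the non-proper morphism $\pi = \bar{\pi} \circ i$ to the smooth proper morphism $\bar{\pi}$ by means of cohomological purity for the smooth pair $(\bar{X},D)$, where $D := \bar{X}\setminus X$.

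First, since $i:X \hookrightarrow \bar{X}$ is the open complement of the smooth codimension-$1$ closed subscheme $D$, and $m$ is invertible on $\bar{X}$, relative cohomological purity yields $R^0 i_*\mathbb{Z}/m = \mathbb{Z}/m_{\bar{X}}$, $R^1 i_*\mathbb{Z}/m = j_*\mathbb{Z}/m(-1)$, and $R^q i_*\mathbb{Z}/m = 0$ for $q\geq 2$, where $j:D\hookrightarrow \bar{X}$ is the closed immersion. By the hypothesis on the boundary, $\bar{\pi}|_D : D \to W$ is a finite disjoint union of copies of $\mathrm{id}_W$, so $R(\bar{\pi}|_D)_*\mathbb{Z}/m(-1)$ is the constant sheaf $\bigoplus_k \mathbb{Z}/m(-1)_W$, concentrated in degree zero.

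Feeding this into the Leray spectral sequence
$$E_2^{p,q} = R^p \bar{\pi}_* R^q i_* \mathbb{Z}/m \;\Rightarrow\; R^{p+q}\pi_*\mathbb{Z}/m$$
yields a five-term exact sequence of sheaves on $W$
$$0 \lr R^1\bar{\pi}_*\mathbb{Z}/m \lr R^1\pi_*\mathbb{Z}/m \lr \bigoplus_k \mathbb{Z}/m(-1)_W \stackrel{d_2}{\lr} R^2\bar{\pi}_*\mathbb{Z}/m.$$
Applying proper smooth base change to the smooth proper morphism $\bar{\pi}$, each $R^q\bar{\pi}_*\mathbb{Z}/m$ is locally constant on $W$ and commutes with base change to $\bar{w}$; moreover, geometric connectedness of the fibres and the trace identify $R^2\bar{\pi}_*\mathbb{Z}/m$ with the constant sheaf $\mathbb{Z}/m(-1)_W$. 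Hence $d_2$ is a map of constant sheaves on $W$, so its kernel is locally constant, and $R^1\pi_*\mathbb{Z}/m$ is an extension of two locally constant sheaves, hence locally constant.

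For the base change claim, the identical purity-plus-spectral-sequence argument applied to the fibre $\bar{X}_{\bar{w}} \to \bar{w}$ produces the corresponding five-term sequence computing $H^1(X \times_W \bar{w}, \mathbb{Z}/m)$, and the base change morphism gives a map between the two sequences. This map is an isomorphism on every outer term: on the $R^q\bar{\pi}_*\mathbb{Z}/m$ terms by proper smooth base change, and trivially on the $\bigoplus \mathbb{Z}/m(-1)_W$ term since it is constant. The five-lemma then yields the isomorphism on the middle term. The principal technical inputs are relative cohomological purity and proper smooth base change; once they are in place, the only mildly subtle point is recognising that $d_2$, viewed as a homomorphism between constant sheaves on $W$, is itself constant, so that its kernel remains locally constant.
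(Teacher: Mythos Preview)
Your argument is correct and follows essentially the same route as the paper: both proofs use cohomological purity for the smooth pair $(\bar X, D)$ to produce the exact sequence relating $R^1\pi_*(\mathbb Z/m)$ to $R^1\bar\pi_*(\mathbb Z/m)$ and the boundary contribution, then invoke smooth proper base change for $\bar\pi$ and finish with the five lemma; you simply spell out the Leray spectral sequence origin of the sequence that the paper writes down directly. One small terminological point: $\mathbb Z/m(-1)$ is in general only locally constant on $W$, not constant, but since kernels of maps between locally constant sheaves of finite abelian groups are again locally constant, your conclusion is unaffected.
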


\begin{proof} Purity \cite[Corollary 5.3]{Milne} implies that there is a commutative diagram

$$ \xymatrix @-1pc { 
0 \ar[r]  &  R^1 \bar{\pi}_* ( \mathbb{Z}/m )_{\bar{w}}  \ar[r] \ar[d] &  R^1 \pi_* (\mathbb{Z}/m)_{\bar{w}} \ar[r] \ar[d]  &     
 \bigoplus\limits_{\pi_0(\bar{X}-X)} 
\mathbb{Z}/m(-1) \ar[r] \ar[d] & \mathbb{Z}/m(-1) \ar[d] \\
 0 \ar[r] &  H^1(\bar{X}\times_W \bar{w} ,\mathbb{Z}/m ) \ar[r] &  H^1(X\times_W \bar{w} ,\mathbb{Z}/m ) \ar[r] & \bigoplus\limits_{\pi_0(\bar{X}-X)} 
\mathbb{Z}/m(-1) \ar[r] & \mathbb{Z}/m(-1) } 
 $$ 
with exact rows.
So the result follows from the smooth proper base change and finiteness theorem \cite[Corollary 4.2]{Milne} and the five lemma.
\end{proof}
 
\subsubsection*{Compactification of curves}

Let $C$ be a reduced scheme of dimension one, separated and of finite type over $\mathbb{Z}$.

\begin{prop}\label{compcurve}
There exists a scheme $\bar{C}$ which is proper over $\Z$ and a dense open immersion $C\subset \bar{C}$ with the following property:
Every morphism from $C$ to a scheme $X$ which is proper over $\Z$ factors uniquely through $\bar{C}$. This clearly determines $\bar C$ up to unique
isomorphism.
\[
\xymatrix{
C \ar[r] \ar[d] & \bar C \ar@{-->}[dl]^(.4){\exists !}\\
X &
}
\]
\end{prop}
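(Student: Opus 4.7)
The plan is to construct $\bar{C}$ by first normalizing $C$, compactifying the normalization to a proper normal scheme, and then descending back to obtain a compactification of $C$ itself. Let $\nu \colon \tilde{C} \to C$ be the normalization, which is finite since $C$ is reduced and of finite type over $\Z$. Each connected component of $\tilde{C}$ is an integral regular one-dimensional scheme of finite type over $\Z$, hence is either an open subscheme of $Spec(\mathcal{O}_K)$ for some number field $K$ (in the horizontal case), or an open subscheme of the unique smooth projective curve over $\mathbb{F}_p$ with the given function field (in the vertical case). In both cases there is a canonical proper normal compactification, and assembling the components yields a proper normal $\Z$-scheme $\bar{\tilde{C}} \supset \tilde{C}$ whose complement $Z := \bar{\tilde{C}} \setminus \tilde{C}$ is a finite set of closed regular points with DVR local rings.

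Next I would construct $\bar{C}$ by gluing. Let $\Sigma \subset C$ denote the (finite) non-normal locus of $C$ and set $U := C \setminus \Sigma$. Then $\nu$ restricts to an isomorphism $\nu^{-1}(U) \cong U$, so $U$ embeds as an open subscheme of both $C$ and of $V := \bar{\tilde{C}} \setminus \nu^{-1}(\Sigma)$, the latter being the open subscheme of $\bar{\tilde{C}}$ containing $\nu^{-1}(U)$ together with $Z$. Define $\bar{C}$ to be the scheme obtained by gluing $C$ and $V$ along the common open $U$. Then $C$ is open in $\bar{C}$ with complement the finite set $Z$, and $\mathcal{O}_{\bar{C}, z} = \mathcal{O}_{\bar{\tilde{C}}, z}$ for $z \in Z$. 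The normalization extends to a morphism $\bar{\nu} \colon \bar{\tilde{C}} \to \bar{C}$ defined as $\nu$ on the open $\tilde{C} \subset \bar{\tilde{C}}$ and as the identity on the open $V$; these agree on the overlap $\nu^{-1}(U)$ by construction of the gluing, and locally on $\bar{C}$ the map $\bar{\nu}$ is either $\nu$ or the identity, hence finite. Since $\bar{\tilde{C}}$ is proper over $\Z$ and $\bar{\nu}$ is finite and surjective, properness is inherited by $\bar{C}$: universal closedness of $\bar{C} \to Spec(\Z)$ follows from that of $\bar{\tilde{C}} \to Spec(\Z)$ together with the surjectivity of $\bar{\nu}$, while separatedness follows from the identity $\Delta_{\bar{C}}(\bar{C}) = (\bar{\nu} \times \bar{\nu})(\Delta_{\bar{\tilde{C}}}(\bar{\tilde{C}}))$, the right-hand side being the image of a closed subset under the finite (hence closed) map $\bar{\nu} \times \bar{\nu}$.

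For the universal property, let $f \colon C \to X$ be a morphism with $X$ proper over $\Z$. The composition $f \circ \nu \colon \tilde{C} \to X$ extends uniquely to $\bar{f} \colon \bar{\tilde{C}} \to X$ by applying the valuative criterion of properness at each point $z \in Z$, whose local ring is a DVR with fraction field the function field of the corresponding component of $\tilde{C}$. One then obtains a morphism $\bar{C} \to X$ by taking $f$ on the open $C \subset \bar{C}$ and $\bar{f}|_V$ on the open $V \subset \bar{C}$; these agree on the overlap $U$ because $\bar{f}|_U = f \circ \nu|_U = f|_U$, so the extension is well defined. Uniqueness of the extension is immediate from the fact that $C$ is dense in the separated scheme $\bar{C}$. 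The principal technical step is the gluing construction defining $\bar{C}$ and the verification that the finite surjection $\bar{\nu}$ transfers properness from $\bar{\tilde{C}}$ to $\bar{C}$; once these are in place, the universal property reduces to a single application of the valuative criterion on the regular one-dimensional compactification.
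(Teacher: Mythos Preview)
Your argument is correct, but it follows a different route from the paper. The paper invokes Nagata's compactification theorem to obtain \emph{some} reduced proper $\bar{C}'\supset C$, then desingularises $\bar{C}'$ at the finitely many boundary points $\bar{C}'\setminus C$ to produce $\bar{C}$; once the boundary points have DVR local rings, the valuative criterion of properness immediately yields the factorisation property. Your construction avoids Nagata entirely by exploiting dimension one: you normalise $C$, use the classical fact that every regular integral separated one\nobreakdash-dimensional scheme of finite type over $\Z$ sits as an open in either $\mathrm{Spec}(\mathcal{O}_K)$ or a smooth projective curve over a finite field, and then glue the resulting proper normal model back to $C$ along the normal locus. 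Both approaches arrive at the same essential point---a compactification whose boundary points are regular---and both finish with the valuative criterion. What your route buys is self\nobreakdash-containedness (no appeal to Nagata, only elementary facts about Dedekind schemes); what the paper's route buys is brevity, since Nagata plus a one\nobreakdash-line desingularisation replaces your explicit gluing and the verification that the finite surjection $\bar{\nu}$ transfers properness to the glued scheme. The separatedness argument via $\Delta_{\bar{C}}(\bar{C})=(\bar{\nu}\times\bar{\nu})(\Delta_{\bar{\tilde{C}}}(\bar{\tilde{C}}))$ is valid and is the standard way to descend separatedness along a finite surjection, though one should note that ``finite'' here is being checked on an open cover of a target not yet known to be separated; this is harmless since finiteness is local on the target and finite morphisms are closed regardless of separatedness of the base.
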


\begin{proof}
By a theorem of Nagata, see~\cite{Luet}, there exists a reduced compactification $C\subset \bar{C}'$. Let $\bar{C}$ be the desingularization
of $\bar{C}'$ at the points $\bar{C}'\backslash C$. An application of the valuative criterion of properness finishes the proof.
\end{proof}

\subsection{Topological Groups}\mbox{}\\
Let $(G,e)$ be a topological group. For further reference recall:
\begin{lem}\label{HausGr}
$\{e\}$ is closed in $G$ if and only if $G$ is Hausdorff.
\end{lem}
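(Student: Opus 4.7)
The plan is to handle the two implications separately. For the forward direction I would simply recall that in any Hausdorff space singletons are closed; this immediately gives that $\{e\}$ is closed in $G$, and nothing about the group structure is needed.

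For the nontrivial reverse direction, I would use the characterization of Hausdorff spaces via the diagonal. Consider the continuous map $\mu: G \times G \to G$ given by $\mu(g,h) = g h^{-1}$, which is continuous because multiplication and inversion are continuous in a topological group. The diagonal $\Delta \subset G \times G$ is exactly $\mu^{-1}(\{e\})$, so if $\{e\}$ is closed then $\Delta$ is closed, and closedness of the diagonal is equivalent to the Hausdorff property.

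As an alternative, more elementary route that avoids quoting the diagonal characterization, I could argue directly: given $g \neq h$, the element $gh^{-1}$ lies in the open set $G \setminus \{e\}$, and by continuity of $\mu$ at $(g,h)$ there exist open neighborhoods $U$ of $g$ and $V$ of $h$ with $U V^{-1} \subseteq G \setminus \{e\}$, which forces $U \cap V = \emptyset$.

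There is no real obstacle here; the only subtlety is remembering that continuity of the group operations is what lets one transfer the single closed point $\{e\}$ into a separation property at arbitrary pairs of points. The whole argument should take only a few lines.
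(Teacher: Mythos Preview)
Your argument is correct and entirely standard. Note, however, that the paper does not actually prove this lemma: it is stated under ``For further reference recall'' with no proof given, so there is nothing to compare your approach against. Your diagonal argument (or the direct separation variant you sketch) is exactly the textbook proof one would supply if asked.
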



By $G^0$ we denote the maximal connected subset of $G$ containing $e$. It is well known that $G^0$ is a closed
subgroup.

\subsection{One-dimensional class field theory.}\mbox{}\\
Let $K$ be a global field and $S$ a finite set of places of $K$ containing all infinite places. Let $X$ be
the open subscheme of $Spec(\mathcal{O}_K)$ whose closed points are exactly the closed points of $Spec(\mathcal{O}_K)$
not in $S$. Denote by $$C(X)=\mathrm{coker} [ K^\times \lr \bigoplus_{x\in |X|} \mathbb{Z} \oplus \bigoplus_{v\in S} K^\times_v    ]$$ the (idele) class group
of $X$. Here $K_v$ is the completion of $K$ at $v$. $C(X)$ is a locally compact Hausdorff group and covariant functorial in $X$. 
There exists a canonical continuous homomorphism $$\rho: C(X) \lr \p (X)$$
called the reciprocity map. If $p=\chara(K)>0$ we have the natural homomorphism $ C(X) \to \mathbb{Z} $ induced by
$X\to Spec(\mathbb{F}_p)$. 
The main theorem of class field theory, as proved for example in \cite[Chapter 8]{ArtinTate}, reads now:

\begin{prop}\label{OneClass}
If $\chara(K)=0$ the sequence
\[
0 \lr C(X)^0 \lr C(X) \lr \p(X) \lr 0
\]
is a topological short exact sequence.\\
If $p=\chara(K)>0$ the homomorphism 
\[
\ker[C(X)\to \mathbb{Z} ] \lr \ker[ \p (X) \to \p ( \mathbb{F}_p ) = \hat{\mathbb{Z}} ]
\]
is a topological isomorphism.
\end{prop}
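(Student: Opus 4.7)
The plan is to reduce Proposition~\ref{OneClass} to the classical adelic formulation of global class field theory, as found in \cite[Chapter~8]{ArtinTate}. The only substantive step beyond citation is to identify $C(X)$ with the appropriate quotient of the idele class group. Setting $C_K=\mathbb{A}_K^\times/K^\times$ and $U^S=\prod_{v\notin S}\mathcal{O}_{K_v}^\times$, the continuous surjection
\[
\mathbb{A}_K^\times\lr \bigoplus_{x\in|X|}\mathbb{Z}\oplus\bigoplus_{v\in S}K_v^\times,
\]
which sends an idele $(\alpha_v)_v$ to the tuple of valuations $\ord_v(\alpha_v)$ for $v\notin S$ together with the $S$-components, has kernel exactly $U^S$; since $U^S$ is compact, this induces a topological isomorphism $C_K/U^S\cong C(X)$. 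The classical reciprocity $C_K\to\p(\mathrm{Spec}(K))$ annihilates $U^S$ because local reciprocity at an unramified place kills units, and thus descends, via the identification of $\p(X)$ with the Galois group of the maximal abelian extension of $K$ unramified outside~$S$, to the map $\rho:C(X)\to\p(X)$ of this section.

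For the number-field case, classical CFT states that $C_K\to \p(\mathrm{Spec}(K))$ is surjective with kernel exactly $C_K^0$. Consequently the image of $C_K^0$ in $C(X)$ is connected, hence contained in $C(X)^0$, while $C(X)/\im(C_K^0)\cong \p(X)$ is profinite, hence totally disconnected; this forces $C(X)^0=\im(C_K^0)$. Topological exactness then follows because $C_K\twoheadrightarrow C(X)$ is an open map, by the open mapping theorem for $\sigma$-compact locally compact Hausdorff groups. For the function-field case, the key classical input is that the degree-zero subgroup $C_K^1\subset C_K$ is compact and maps topologically isomorphically onto $\ker[\p(\mathrm{Spec}(K))\to\hat{\mathbb{Z}}]$ after restriction to the quotient unramified outside $S$. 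Since $U^S\subset C_K^1$ and the degree map factors through $C(X)$, dividing by $U^S$ yields the asserted topological isomorphism $\ker[C(X)\to\mathbb{Z}]\to\ker[\p(X)\to\hat{\mathbb{Z}}]$.

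The real content is of course done in \cite[Chapter~8]{ArtinTate}; what remains here is essentially verifying that the adelic topological identifications survive division by the compact subgroup $U^S$, which is handled uniformly by the open mapping theorem, so no extra subtleties arise. The deepest classical input I would rely on is the existence part of global reciprocity, providing surjectivity in both cases.
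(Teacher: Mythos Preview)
Your proposal is correct and matches the paper's approach, which offers no argument beyond the citation to \cite[Chapter~8]{ArtinTate}; you have simply spelled out the standard bridge---the topological identification $C(X)\cong C_K/U^S$ via the open mapping theorem and the compatibility of global reciprocity with the quotient by $U^S$---that the paper leaves implicit.
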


Let now $X$ be as above and $\phi:Y\to X$ be a Galois covering, $G=\Gal(Y/X)$. 
\begin{coro}[Isomorphism Theorem]\label{IsoOne}
The reciprocity map induces an isomorphism
\[
C(X)/\phi_* C(Y)  \tilde{\lr} G^{ab}\; .
\]
\end{coro}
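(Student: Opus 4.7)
By functoriality of the reciprocity map applied to $\phi\colon Y\to X$, the square
\[
\xymatrix{
C(Y) \ar[r]^{\phi_*} \ar[d]_{\rho_Y} & C(X) \ar[d]^{\rho_X}\\
\p(Y) \ar[r]^{\phi_*} & \p(X)
}
\]
commutes. Since $\phi$ is Galois with group $G$, abelianizing the short exact sequence $1\to\pi_1(Y)\to\pi_1(X)\to G\to 1$ yields exactness of $\p(Y)\to\p(X)\to G^{ab}\to 0$, so the above composition produces a continuous homomorphism
\[
\bar\rho\colon C(X)/\phi_*C(Y)\longrightarrow G^{ab},
\]
and the task is to prove that $\bar\rho$ is a bijection.

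Surjectivity follows from Proposition~\ref{OneClass}: in characteristic $0$ the map $\rho_X$ is itself surjective, while in characteristic $p$ the image of $\rho_X$ contains $\ker[\p(X)\to\hat\Z]$ together with Frobenii at closed points and is therefore dense in $\p(X)$; either way the induced map to the finite discrete group $G^{ab}$ is surjective. For injectivity in characteristic $0$, take $c\in\ker\bar\rho$, so that $\rho_X(c)\in\phi_*\p(Y)$. Surjectivity of $\rho_Y$ yields $y\in C(Y)$ with $\rho_X(\phi_*(y))=\rho_X(c)$, whence $c-\phi_*(y)\in\ker\rho_X=C(X)^0$. It therefore suffices to prove $C(X)^0\subseteq\phi_*C(Y)$. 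This one verifies directly from the definition of $C(X)$: the group $C(X)^0$ is the image in $C(X)$ of the product $\prod_{v\in S_\infty}(K_v^\times)^0$ of archimedean connected components, and a short case analysis over real/real, real/complex, and complex/complex embeddings shows that for every archimedean place $v$ of $K$ the local norm $\prod_{w\mid v}N_{L_w/K_v}$ sends $\prod_{w\mid v}(L_w^\times)^0$ onto $(K_v^\times)^0$.

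In the characteristic $p$ case the injectivity is even cleaner. Proposition~\ref{OneClass}, combined with the commutativity of $\rho_X$ with the degree maps $C(X)\to\Z\hookrightarrow\hat\Z\leftarrow\p(X)$, forces $\ker\rho_X=0$. Given $c\in\ker\bar\rho$, lift $\rho_X(c)$ to $\bar y\in\p(Y)$ with $\phi_*(\bar y)=\rho_X(c)$; the degree $\deg(\bar y)=\deg(c)$ then lies in the intersection of $\Z$ with the image of $\p(Y)\to\hat\Z$, which equals the image of $\deg\colon C(Y)\to\Z$. Choose any $y_0\in C(Y)$ of that degree and correct by an element in the degree kernel using the isomorphism of Proposition~\ref{OneClass} to obtain $y\in C(Y)$ with $\rho_Y(y)=\bar y$; then $\phi_*(y)=c$ because the difference lies in $\ker\rho_X=0$. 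The main obstacle is the archimedean local-norm computation supporting the characteristic $0$ injectivity; all other steps are formal consequences of Proposition~\ref{OneClass}.
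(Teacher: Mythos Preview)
Your diagram chase is the same as the paper's, and the characteristic $p$ argument is fine. The gap is in characteristic $0$, where you assert that $C(X)^0$ \emph{equals} the image in $C(X)$ of $\prod_{v\in S_\infty}(K_v^\times)^0$. This is false in general: that image need not be closed. Concretely, take $K$ real quadratic and let $S$ contain a finite place $\mathfrak{p}$. Then the image of $I(X)^0$ in $C(X)$ is closed iff the image of $K^\times$ in $I(X)/I(X)^0$ is closed, and the latter fails because the global units $\mathcal{O}_K^\times$ inject as an infinite cyclic subgroup into the compact profinite group $\mathcal{O}_{\mathfrak{p}}^\times$, where no infinite cyclic subgroup can be closed. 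So $C(X)^0$ is only the \emph{closure} of that archimedean image (compare Proposition~\ref{Intersec}), and your local norm computation does not by itself yield $C(X)^0\subset\phi_*C(Y)$.

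The paper bypasses this entirely: one checks directly that $\phi_*C(Y)$ is \emph{open} in $C(X)$ (the local norm $N_{L_w/K_v}\colon L_w^\times\to K_v^\times$ has open image for every $v\in S$, archimedean or not), and any open subgroup of a topological group contains the connected component of the identity. This single observation replaces your archimedean case analysis and closes the gap; the rest of your diagram chase then goes through. If you want to keep your approach, you must add the remark that $\phi_*C(Y)$ is open---hence closed---so that it contains the closure of the archimedean image; but at that point the explicit norm computation at the infinite places becomes superfluous.
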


\begin{proof}
One checks that $\phi_* C(Y)$ is an open subgroup of $C(X)$, in particular we have $C(X)^0\subset \phi_* C(Y)$. Now a simple diagram chase, using
Proposition~\ref{OneClass} for $X$ and $Y$, proves the corollary.
\end{proof}

For further reference we finally recall the weak approximation lemma. Let $F$ be a field and $|\cdot |_1, \ldots  , |\cdot |_n$ inequivalent
valuations on $F$.

\begin{lem}[Weak approximation lemma]\label{ElApprox}
Given $a_1,\ldots , a_n\in F$ and $\epsilon >0$ there exists $a\in F$ with
$|a-a_i|_i<\epsilon $ for all $1\le i \le n$.
\end{lem}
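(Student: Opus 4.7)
The plan is to follow the classical Artin--Whaples argument: reduce the simultaneous approximation to the construction of idempotent-like elements $e_1,\dots,e_n\in F$, each being close to $1$ in the $i$-th valuation and close to $0$ in the others, and then form the linear combination $a=\sum_i a_i e_i$.

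First I would observe that if one can produce, for each $i$, an element $b_i\in F$ satisfying $|b_i|_i>1$ and $|b_i|_j<1$ for $j\neq i$, then taking
\[
e_i=\frac{b_i^N}{1+b_i^N}
\]
for $N$ sufficiently large yields $|e_i-1|_i<\epsilon'$ and $|e_i|_j<\epsilon'$ for $j\neq i$, where $\epsilon'$ can be chosen as small as desired. Indeed, in $|\cdot|_i$ the denominator satisfies $|1+b_i^N|_i\ge |b_i|_i^N-1$, so $e_i\to 1$; in $|\cdot|_j$ for $j\neq i$ the numerator tends to $0$ while the denominator tends to $1$. Setting $a=\sum_i a_i e_i$ and using the triangle inequality then gives $|a-a_i|_i<\epsilon$ after choosing $\epsilon'$ small relative to $\epsilon$ and the $|a_j|_i$.

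The substantive step is the construction of such a $b_i$; by symmetry it suffices to do this for $i=1$, which I would do by induction on $n$. For $n=2$, the hypothesis that $|\cdot|_1$ and $|\cdot|_2$ are inequivalent means neither topology refines the other, so there exist $x,y\in F$ with $|x|_1<1\le|x|_2$ and $|y|_2<1\le|y|_1$; then $b=y/x$ has $|b|_1>1$ and $|b|_2<1$. For the inductive step, assume we have $b\in F$ with $|b|_1>1$ and $|b|_j<1$ for $2\le j\le n-1$, and (by the case $n=2$) an element $c\in F$ with $|c|_1>1$ and $|c|_n<1$. There are three cases according to $|b|_n$: if $|b|_n<1$ we are done; if $|b|_n=1$, the element $b^N c$ works for $N$ large, since raising $b$ to a high power does not affect $|b|_n$ while shrinking the middle valuations; if $|b|_n>1$, the element $b^N/(1+b^N)$ tends to $1$ in both $|\cdot|_1$ and $|\cdot|_n$ and to $0$ elsewhere, so multiplying by $c$ produces an element with $|\cdot|_1$-value near $|c|_1>1$, $|\cdot|_n$-value near $|c|_n<1$, and middle values near $0$.

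The only real obstacle is the case analysis in the inductive step, in particular handling the case $|b|_n>1$, where one must use the geometric-series-type trick $b^N/(1+b^N)$ to turn a growing quantity into one that is bounded and close to $1$. Once this key construction is in place, everything else is a routine estimate with the triangle inequality.
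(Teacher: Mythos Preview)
Your argument is the standard Artin--Whaples proof and is correct. Note, however, that the paper does not actually give a proof of this lemma: it is stated only as a recall of a well-known fact (``For further reference we finally recall the weak approximation lemma''), with no proof supplied. So there is nothing in the paper to compare your approach against; you have simply filled in the omitted classical argument, which is fine.
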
 

\subsection{Chebotarev density}\mbox{}\\ 
Let $X$ be an arithmetic scheme, $d=\dim(X)$. One can show that
$
\sum_{x\in |X|} \frac{1}{\mathbf{N}(x)^{s}}
$
converges for $s>d$.
Here for a closed point $x$ in $X$ we denote by $\mathbf{N}(x)$ the number of elements in $\mathbf{k}(x)$.

Given a subset $M\subset |X|$ we call
\[
D(M)=\lim_{s\to d+0} \left( \sum_{x\in M} \frac{1}{\mathbf{N}(x)^s}  \right) / \log(\frac{1}{s-d})
\] 
the density of $M$ if the limit exists.
Let $Y \to X$ be a Galois covering with $G=\Gal(Y/X)$ 

\begin{prop}[Generalized Chebotarev]\label{Chebotarev}
If $R\subset G$ is stable under conjugation and $M= \{ x\in |X|\: |\: Frob_x\in R \}$ the density
$D(M)$ is well defined and
\[
D(M)= \frac{\# R}{\# G}\; .
\]
\end{prop}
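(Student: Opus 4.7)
The plan is to follow the classical analytic argument via Artin $L$-functions, adapted here to the étale Galois covering $Y\to X$ of arithmetic schemes. By finite additivity of the Dirichlet sums involved in the definition of the density, I may reduce to the case $R=C$ of a single $G$-conjugacy class. Character orthogonality on $G$ writes $\mathbf{1}_{C}(g)=\tfrac{\#C}{\#G}\sum_{\chi\in\mathrm{Irr}(G)}\overline{\chi(c)}\chi(g)$ for any $c\in C$, turning the constraint $\mathrm{Frob}_x\in C$ into a sum over irreducible characters of $G$.

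For each irreducible character $\chi$ of $G$ I introduce the partial Euler product
\[
L(s,\chi,Y/X)=\prod_{x\in|X|}\det\!\bigl(1-\chi(\mathrm{Frob}_x)\mathbf{N}(x)^{-s}\bigr)^{-1},
\]
which converges absolutely on $\mathrm{Re}(s)>d$ since each eigenvalue of $\rho_\chi(\mathrm{Frob}_x)$ is a root of unity, so the Euler factors are dominated by those of $\zeta_X(s)^{\dim\chi}$. Taking logarithms, the contribution from $n\geq 2$, namely $\sum_{x,n\geq 2}\chi(\mathrm{Frob}_x^n)\mathbf{N}(x)^{-ns}/n$, is absolutely convergent on $\mathrm{Re}(s)>d/2$ and hence bounded near $s=d$. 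Inserting the orthogonality expansion yields
\[
\sum_{x\in M}\mathbf{N}(x)^{-s}=\frac{\#C}{\#G}\sum_{\chi\in\mathrm{Irr}(G)}\overline{\chi(c)}\,\log L(s,\chi,Y/X)+O(1)
\]
as $s\to d^+$. The trivial character contributes $L(s,\mathbf{1},Y/X)=\zeta_X(s)$, which has a simple pole at $s=d$, so its logarithm is $\log\tfrac{1}{s-d}+O(1)$. Granting that for every nontrivial irreducible $\chi$ the $L$-function $L(s,\chi,Y/X)$ is holomorphic and nonzero at $s=d$, the corresponding logarithmic contributions are $O(1)$, and substituting gives precisely the claimed density formula $D(M)=\#C/\#G$ (and in particular the existence of the limit defining $D(M)$).

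The main obstacle is thus the pair of analytic inputs at $s=d$: the simple pole of $\zeta_X$, and the holomorphy and non-vanishing of $L(s,\chi,Y/X)$ for $\chi\neq\mathbf{1}$. The pole behavior of $\zeta_X$ follows from the Hasse--Weil-type factorization $\zeta_X(s)=\prod_p \zeta_{X_p}(s)$ over the residue characteristics, using explicit rational expressions for each $\zeta_{X_p}$. For the non-vanishing of the nontrivial $L(s,\chi,Y/X)$, Brauer's induction theorem expresses $\chi$ as a $\mathbb{Z}$-linear combination of characters induced from one-dimensional characters of subgroups $H\leq G$, and Artin's inductive formalism then factors $L(s,\chi,Y/X)$ into products and quotients of abelian $L$-functions attached to intermediate étale covers $Y\to Y^H\to X$. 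Iteratively fibering by the elementary fibration of Proposition~\ref{ElFib} reduces the analytic question to the one-dimensional situation, where the non-vanishing of Hecke-type $L$-series at the edge of convergence is the classical input underlying Proposition~\ref{OneClass}.
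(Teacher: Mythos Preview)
The paper offers no proof here; it simply refers the reader to Serre \cite{Serre}. Your outline is essentially Serre's argument: reduce to a single conjugacy class, expand $\mathbf 1_C$ via character orthogonality, take logarithms of Artin $L$-functions, discard the $n\ge 2$ terms as bounded near $s=d$, and use Brauer induction to reduce the analytic input to abelian $L$-functions. Up through the penultimate paragraph this is a faithful sketch.

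The gap is in your final reduction. You propose to establish the simple pole of $\zeta_X$ and the non-vanishing of the nontrivial $L(s,\chi)$ at $s=d$ by ``iteratively fibering by the elementary fibration of Proposition~\ref{ElFib}'' to reach dimension one. This does not work: elementary fibrations exist only \'etale-locally around the generic point, and in any case the zeta function of a fibred scheme does not factor into zeta functions of base and fibre in a way that would transport the edge-of-convergence behaviour up from dimension one. Nor is the one-dimensional non-vanishing the content of Proposition~\ref{OneClass}; that proposition is full class field theory, which is logically independent of the $L$-function input you need here.

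What Serre actually proves---and what you must invoke directly---is that for \emph{every} integral scheme $Z$ of finite type over $\Z$ of dimension $d$, the function $\zeta_Z(s)$ continues meromorphically to $\mathrm{Re}(s)>d-\tfrac12$ with a simple pole at $s=d$. His proof uses the factorisation $\zeta_Z(s)=\prod_p \zeta_{Z_p}(s)$ together with the Lang--Weil estimates on point counts of varieties over finite fields (ultimately the Riemann hypothesis for curves), not any fibration into relative curves. Applying this to all the intermediate schemes $Y^H$ and combining with Brauer induction shows each $L(s,\chi)$ is meromorphic near $s=d$; the identity $\zeta_Y=\prod_\chi L(s,\chi)^{\dim\chi}$ together with the standard positivity/conjugation argument then yields holomorphy and non-vanishing of $L(s,\chi)$ at $s=d$ for $\chi\ne\mathbf 1$.
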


For a proof see for example \cite{Serre}.

\subsection{Katz-Lang finiteness}\mbox{}\\
Let $f:X\to S$ be a smooth surjective morphism of arithmetic schemes. Assume the geometric generic
fibre of $f$ is connected. Then Katz and Lang \cite{KatzLang} prove the following geometric finiteness result:
\begin{prop}[Katz-Lang]\label{Katz-Lang}
The kernel of $f_*:\p (X) \to \p (S)$ is finite.
\end{prop}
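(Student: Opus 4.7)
The strategy is to identify $\ker(f_*)$ with (a quotient of) the abelian fundamental group of a geometric fibre of $f$ and then bound this by torsion of an abelian variety over a finitely generated field.

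First, I would make preliminary reductions. Since the locus in $S$ where the fibre is geometrically connected is open and contains the generic point, one can shrink $S$ to assume all fibres of $f$ are geometrically connected; the error introduced by such a shrinking is controlled, e.g.\ by the cokernel of $\p(U)\to\p(S)$ for $U\subset S$ dense open, which is finitely generated and can be absorbed into the statement by chasing an appropriate diagram. One can then apply the homotopy exact sequence at a geometric generic point $\bar\eta$ of $S$,
\[
\pi_1(X_{\bar\eta})\to\pi_1(X)\to\pi_1(S)\to 1,
\]
whose abelianisation identifies $\ker(f_*)$ with the image of $\p(X_{\bar\eta})$ in $\p(X)$.

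Second, I would analyse this image geometrically. The geometric generic fibre $X_{\bar\eta}$ is a smooth connected variety over the algebraically closed field $\kappa=\kappa(\bar\eta)$; by passing to a smooth compactification $\bar X_{\bar\eta}$ (and treating the boundary contribution separately via purity, as in Proposition~\ref{EFiniteness}) one reduces the prime-to-$p$ part of $\p(X_{\bar\eta})$ to $\p(\bar X_{\bar\eta})^{(p')}$, which is canonically the prime-to-$p$ Tate module of the Picard variety $\mathrm{Pic}^0(\bar X_{\bar\eta})$. The image of this in $\p(X)^{(p')}$ is governed by the coinvariants under the monodromy action of $\pi_1(S)$, essentially because a character of $\p(X_{\bar\eta})$ which extends to $X$ must be invariant under specialisation.

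Third, $\mathrm{Pic}^0(\bar X_{\bar\eta})$ descends to an abelian variety over a finitely generated subfield of $\kappa$; by the classical theorem of N\'eron (finiteness of torsion of an abelian variety over a finitely generated field), the $\pi_1(S)$-coinvariants of its prime-to-$p$ Tate module are finite. Combining the steps, $\ker(f_*)^{(p')}$ is finite. The main obstacle I expect is the $p$-part in positive residue characteristic: there $\p(\bar X_{\bar\eta})$ captures the $p$-divisible group of $\mathrm{Pic}^0$, and the analogous coinvariant-finiteness requires Dieudonn\'e-theoretic bounds on $p$-torsion of abelian varieties over finitely generated fields of characteristic $p$, which is precisely the delicate content handled in \cite{KatzLang}. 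A second technical point is making the ``image equals coinvariants'' step rigorous in mixed characteristic, which typically is done via a proper base change / specialisation argument after a further shrinking of $S$.
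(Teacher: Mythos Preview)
The paper does not supply a proof of this proposition at all: it is quoted as a black box from \cite{KatzLang}, with the single sentence ``Then Katz and Lang \cite{KatzLang} prove the following geometric finiteness result''. So there is nothing in the paper to compare your argument against beyond the reference itself.

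Your sketch is in fact a faithful outline of the Katz--Lang argument: the homotopy exact sequence reduces the problem to bounding the image of $\p(X_{\bar\eta})$, one passes to the Albanese/Picard variety of a compactification, and one invokes finiteness of torsion for abelian varieties over finitely generated fields (Lang--N\'eron). Two small remarks. First, in the specific setting of the paper both $X$ and $S$ are arithmetic schemes, hence flat over $\Z$, so the geometric generic fibre lives over a field of characteristic zero and the delicate $p$-part you flag does not actually arise here; it is only needed for the positive-characteristic statement in \cite{KatzLang}. Second, your ``shrinking $S$'' step is phrased slightly awkwardly: the map $\p(U)\to\p(S)$ is surjective for normal $S$, so its cokernel is trivial; the relevant control is rather over $\ker[\p(U)\to\p(S)]$, and one shows that replacing $S$ by $U$ can only enlarge the kernel of $f_*$, so finiteness over $U$ suffices. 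These are cosmetic points; the substance of your plan matches the cited source.
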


\section{Bloch's approximation lemma}

\noindent An essential method in higher global class field theory is to reduce things to the one-dimensional case by the following
procedure (for a simple incidence of the method see the proof of the Isomorphism Theorem in Section~\ref{SecIso}):
Given an arithmetic scheme $X$ and a finite set of closed points on the scheme find a `good' curve on $X$ which contains the
given points.
The most general conjecture in this direction would be:

\begin{conj}[Space filling curves on arithmetic schemes]
Let $X$ be a regular quasi-projective arithmetic scheme, i.e.~$X$ is a subscheme of $\mathbb{P}^n_{\Z}$, and let $S\subset |X|$ be a finite set of closed points. Then there exists a regular curve $C$ on $X$ with $S\subset |C|$.
\end{conj}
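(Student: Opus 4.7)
The plan is to prove the conjecture by a Poonen--style arithmetic Bertini argument with prescribed local conditions at $S$, iteratively cutting the ambient $\mathbb{P}^n_\Z$ by $d-1$ hypersurfaces of a large common degree $e$ (where $d=\dim X$) so that the resulting subscheme of $X$ contains $S$ and is regular at every closed point.

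First I would set up the local data. At each $s\in S$, regularity of $X$ at $s$ allows one to choose a regular system of parameters $t_{s,1},\ldots,t_{s,d}\in \mathcal{O}_{X,s}$ whose first $d-1$ elements cut out a regular curve germ through $s$ in $\mathrm{Spec}(\mathcal{O}_{X,s})$. For $e\gg 0$ the restriction map
\[
H^0(\mathbb{P}^n_\Z,\mathcal{O}(e)) \lr \bigoplus_{s\in S}\mathcal{O}_{X,s}/\mathfrak{m}_s^2
\]
is surjective (a Castelnuovo--Mumford regularity statement), so the $t_{s,i}$ can be realised as prescribed $1$-jets at $S$ of global sections of $\mathcal{O}(e)$, once local trivialisations are fixed. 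This reduces the problem to producing $d-1$ such sections whose common vanishing locus on $X$ is regular everywhere.

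The second step is the Bertini induction. Applying an arithmetic Bertini theorem of Poonen type in its semi-local form, one expects that for $e\gg 0$ the set of sections $\sigma\in H^0(\mathbb{P}^n_\Z,\mathcal{O}(e))$ with the prescribed $1$-jet at $S$ for which $X_1:=X\cap V(\sigma)$ is regular of pure dimension $d-1$ has positive density (heuristically $\prod_{x\in |X|\setminus S}(1-\mathbf{N}(x)^{-d-1})$ up to correction at $S$ and at infinity). Pick such a $\sigma_1$; then $X_1$ is again a regular quasi-projective arithmetic scheme containing $S$, and iterating $d-1$ times yields a regular one-dimensional $C':=X_{d-1}\subset X$ with $S\subset |C'|$. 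A Bertini-type irreducibility argument on the generic fibre, varying the $\sigma_i$ in a linear pencil over $\mathbb{Q}$ and invoking a monodromy/connectedness statement, ensures that for a generic choice the component of $C'$ meeting every point of $S$ is integral, giving the desired $C$.

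The main obstacle is the arithmetic Bertini input in the second step. Over $\mathbb{F}_q$ the result is Poonen's density theorem, and semi-local variants with prescribed jets at finitely many points are available. Over $\Z$, however, the statement for \emph{regularity} (rather than smoothness over $\Z$) is essentially the content of the conjecture: at a closed point $x$ of $X\cap V(\sigma)$ of residue characteristic $p$, regularity requires that $\sigma$ together with a uniformiser of $p$ form part of a regular system of parameters in $\mathcal{O}_{X,x}$, which mixes horizontal and vertical information. The archimedean sieve bounds in Poonen's proof must be supplemented by uniform control on the $p$-adically singular locus of $X$ for every prime $p$ simultaneously, and it is this uniform control that is the essential difficulty and the reason the statement presently remains open.
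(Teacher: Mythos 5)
First, a point of order: the statement you are proving is labelled a \emph{Conjecture} in the paper, and the paper offers no proof of it. The author records only that a conditional result for $S=\emptyset$ is known (via hyperplane sections, \cite{Poonen}) and then proves a much weaker substitute, Proposition~\ref{Approx} (Bloch approximation), which abandons global regularity of $C$ entirely: one asks only that the finitely many prescribed points be regular points of $C$ and that a fixed Galois covering $Y\to X$ pull back irreducibly to $C$. That weak form is what the class field theory actually uses, and it is proved by the Gabber--Poonen Bertini theorem over the \emph{finite} quotient $A/I$ of a semi-local ring of $\mathcal{O}$ (Proposition~\ref{GP}) combined with Chebotarev density (Proposition~\ref{Chebotarev}), not by any Bertini theorem over $\Z$.

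Your proposal, as you yourself concede in its final paragraph, is not a proof: the entire weight rests on an ``arithmetic Bertini theorem of Poonen type'' over $\Z$ guaranteeing that a hypersurface section with prescribed $1$-jets at $S$ is \emph{regular} at every closed point, and no such theorem is available. The difficulty is exactly where you locate it: regularity at a closed point of residue characteristic $p$ is a condition mixing the section $\sigma$ with a uniformiser of $p$, and one must control the failure locus uniformly over all primes simultaneously; even the case $S=\emptyset$ is only known conditionally. So the argument reduces the conjecture to a density statement that is at least as hard as the conjecture itself, and the closing irreducibility step (``varying the $\sigma_i$ in a pencil and invoking monodromy'') is likewise only a heuristic. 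If your goal is to recover what the paper actually needs, the correct move is the one the paper makes: give up on regularity of $C$ away from $S$, apply Bertini only over the finite residue rings where Gabber's and Poonen's results hold unconditionally, and enforce irreducibility of $Y\times_X C$ by forcing the curve through enough closed points whose Frobenius elements meet every conjugacy class of $\Gal(Y/X)$.
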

A conditional result for $S=\emptyset$ has been proven in~\cite{Poonen}; the idea is to use hyperplane sections.
In higher class field theory the following very weak form of the conjecture is sufficient. The next proposition strengthens Bloch's approximation lemma \cite[Lemma 3.3]{Bloch}, \cite[Lemma 6.21]{Raskind}, but whereas Bloch uses Hilbert irreducibility we use a Bertini theorem over finite fields and some classical algebraic geometry.

Let $F$ be a number field and $\mathcal{O}$ its ring of integers.

\begin{prop}[Bloch approximation]\label{Approx}
Let $X/\mathcal{O}$ be a smooth quasi-projective arithmetic scheme and let $Y\to X$ be a
Galois covering.  Let $x_i$ ($1\le i\le n$) be a finite set of closed points of $X$. Then there exists a curve $C\subset X$ such that\\
(i) The points $x_i$ are regular points of the curve $C$.\\
(ii) The scheme $Y\times_X C$ is irreducible.
\end{prop}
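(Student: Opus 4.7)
The plan is to arrange condition (ii) by forcing $C$ to pass through enough auxiliary closed points, chosen by Chebotarev so that their Frobenius elements generate $G=\Gal(Y/X)$. Condition (i) then becomes a pure Bertini problem: find an integral curve in $X$ through a prescribed finite set of closed points, regular at the $x_i$.

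First, apply the generalized Chebotarev density theorem (Proposition~\ref{Chebotarev}) to $Y\to X$: every conjugacy class of $G$ is the Frobenius class of a positive-density set of closed points of $X$, and in particular a positive-density subset of any dense open. Picking one representative per conjugacy class, disjoint from $\{x_1,\ldots,x_n\}$, I obtain auxiliary points $z_1,\ldots,z_m\in|X|$ such that $\{Frob_{z_j}\}_j$ generates $G$.

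Next, fix an embedding $X\hookrightarrow\mathbb{P}^N_{\mathcal{O}}$ coming from quasi-projectivity and cut $X$ successively by hypersurfaces of high degree, each constrained to vanish on the finite set $\{x_i\}\cup\{z_j\}$ but chosen generically otherwise. At each stage one demands two things: the intersection remains geometrically integral, and the new hypersurface meets the current scheme transversally at every $x_i$. Transversality at finitely many points is an open condition in the linear system of hypersurfaces of given degree vanishing on the prescribed set, so both conditions reduce to nonemptiness of an intersection of open subsets of a linear system; the essential input is a Bertini irreducibility statement valid over finite fields (of Poonen/Gabber type), which replaces Bloch's use of Hilbert irreducibility and handles those closed fibres of $X/\mathcal{O}$ whose residue field is finite. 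After $\dim(X)-1$ such cuts one obtains a one-dimensional closed subscheme, and passing to an irreducible component through the prescribed points produces an integral curve $C\subset X$. Transversality gives (i), since the local equations of the successive cuts form a regular system of parameters at each $x_i$. For (ii), integrality of $C$ together with $z_j\in C$ shows that each $Frob_{z_j}\in\pi_1(X)$ lies in the image of $\pi_1(C)\to G$, so that image equals $G$; hence $Y\times_X C$ is connected, and being a (connected) Galois covering is irreducible.

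The main obstacle is the Bertini step: the hypersurfaces must simultaneously pass through a fixed finite set of closed points, some with very small finite residue fields, while keeping each successive cut geometrically integral and transverse at the marked points. Sparse linear systems can fail either property when treated naively over the integers, and the resolution is to pass to sufficiently high degree and invoke a Bertini theorem valid over finite fields. Demanding regularity only at the finitely many points $x_i$ rather than along the whole of $C$ is what keeps the argument manageable, and replacing Hilbert irreducibility by this geometric Bertini is the technical improvement over Bloch's and Raskind's versions.
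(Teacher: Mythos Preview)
Your reduction of (ii) to (i) via Chebotarev---add auxiliary points whose Frobenii hit every conjugacy class, then use the elementary fact that such a subset generates $G$---is exactly what the paper does (Lemma~\ref{GroupGen}). The Bertini strategy for (i) is also the right one, but two genuine gaps remain.

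First, the irreducibility at each cut is not a consequence of ``a Bertini irreducibility statement of Poonen/Gabber type''. Proposition~\ref{GP} is a \emph{smoothness} statement over a finite field; applied fibrewise over the primes under the $x_i$ it controls the special fibres, not the global arithmetic section over $\mathcal{O}$. The paper's fix is to adjoin one further auxiliary prime $p$ at which the closure $\bar{X}_p$ is smooth and geometrically irreducible, arrange via Gabber--Poonen that $H\cap\bar{X}_p$ is smooth, and then invoke the classical connectedness of hypersurface sections (Proposition~\ref{con}) to see that $H\cap\bar{X}_p$ is geometrically integral. \emph{That} is what forces the arithmetic section $H_A\cap\bar{X}$ to be irreducible. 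Your phrase ``passing to an irreducible component through the prescribed points'' hides precisely this issue: without an irreducibility argument there is no reason a single component contains all of them.

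Second, the connectedness input Proposition~\ref{con} requires $\dim(\bar{X}_p)\ge 2$, i.e.\ $\dim(X)\ge 3$, so the final cut from an arithmetic surface down to a curve cannot be handled by the same mechanism. The paper treats $\dim(X)=2$ separately: it builds a quasi-finite map $\phi:X\to\mathbb{P}^1_{\mathcal{O}}$, \'etale at the $x_i$, chooses new Chebotarev points downstairs for the Galois closure $Y_U\to U\subset\mathbb{P}^1_{\mathcal{O}}$, finds a curve in $\mathbb{P}^1_{\mathcal{O}}$ through the images, and pulls it back. Your uniform ``$\dim(X)-1$ cuts'' scheme does not survive this base case.
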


Before we give the proof, we have to recall a Bertini type theorem over finite fields.
Let $X$ be a quasi-projective subscheme of $\mathbb{P}^n_k$ where $k$ is some finite field. For a nonvanishing section $f\in H^0(\mathbb{P}^n_k ,\mathcal{O}(d))$
we denote by $H_f$ the corresponding hypersurface.

\begin{prop}[Gabber-Poonen]\label{GP}
Assume $X$ is smooth and $x_i$ ($1\le i \le m$) is some finite family of closed points of $X$. For  $d\gg 0$ there exists
$f\in H^0(\mathbb{P}^n_k ,\mathcal{O}(d))$, such that $X$ and $H_f$ intersect properly, $X\cap H_f$ is smooth and $x_i\in H_f$ for $1\le i \le m$.
\end{prop}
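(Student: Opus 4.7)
The plan is to reduce to Poonen's Bertini theorem over finite fields (cited in the text as \cite{Poonen}), which asserts that for a smooth quasi-projective $X \subset \mathbb{P}^n_k$ the set of $f \in H^0(\mathbb{P}^n_k, \mathcal{O}(d))$ for which $X \cap H_f$ is smooth of pure dimension $\dim X - 1$ has positive density as $d \to \infty$. The task is then to show that imposing the additional linear conditions $f(x_i) = 0$ for $i=1,\dots,m$ still leaves a positive-density set of good sections, and in particular a nonempty one for $d \gg 0$.

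Let $Z = \{x_1,\dots,x_m\}$ with ideal sheaf $\mathcal{I}_Z$ and set $V_d := H^0(\mathbb{P}^n_k, \mathcal{I}_Z(d))$, the subspace of sections vanishing on $Z$. For $d$ large, $V_d$ has finite codimension $\sum_i [\kappa(x_i):k]$ in $H^0(\mathbb{P}^n_k,\mathcal{O}(d))$. The first key input is that for $d$ sufficiently large the jet map $V_d \to \mathcal{O}_{X,y}/\mathfrak{m}_y^2$ remains surjective for every closed point $y \in X \setminus Z$, with the required lower bound on $d$ growing only linearly in $\deg y$; this follows from Castelnuovo-Mumford regularity applied to the ideal sheaf $\mathcal{I}_Z \cdot \mathcal{I}_y^2$ on $\mathbb{P}^n_k$, using that $Z$ is a fixed finite subscheme.

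I would then treat the prescribed points $x_i$ and the remaining points of $X$ separately. At each $x_i$, the condition that $X \cap H_f$ be smooth at $x_i$ is that the image of $f$ in $\mathfrak{m}_{X,x_i}/\mathfrak{m}_{X,x_i}^2$ be nonzero; restricted to $V_d$ this excludes only a codimension-$\dim X$ linear subspace, so a positive fraction of $f \in V_d$ is smooth at $x_i$. At points $y \in X \setminus Z$ one runs Poonen's density sieve exactly as in the original paper: decompose the ``bad'' locus according to $\deg y$, handle low-degree $y$ by a finite count, medium-degree $y$ by the surjectivity of the $1$-jet map, and high-degree $y$ via Lang--Weil bounds on $|X(k')|$ for finite extensions $k'/k$. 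Because the jet conditions at $y \notin Z$ are independent of those at $Z$ (by the surjectivity noted above), the sieve estimates apply verbatim after restriction to the subspace $V_d$.

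Combining the local contributions yields a positive lower bound on the density within $V_d$ of sections cutting out a smooth, properly intersecting subscheme of $X$ containing all $x_i$, so for $d \gg 0$ some such $f$ exists; proper intersection is automatic from the fact that the smooth locus has the expected dimension $\dim X - 1$. The main technical obstacle is the uniformity in $y$ of the regularity bound needed in the high-degree tail of the sieve: one must control the Castelnuovo--Mumford regularity of the twisted ideal $\mathcal{I}_Z \cdot \mathcal{I}_y^2$ rather than of $\mathcal{I}_y^2$ alone, and this is where the finiteness of $Z$ genuinely enters, though the resulting estimates are the same as in \cite{Poonen} up to a shift in $d$ depending only on $Z$.
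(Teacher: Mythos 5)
Your strategy is sound, but note that the paper's own proof of this proposition is a bare citation: Gabber's Corollary 1.6 handles the case $\mathrm{char}(k)\mid d$, and Poonen's Theorem 1.2 --- the ``Bertini with Taylor conditions'' theorem --- handles all sufficiently large $d$; that theorem already builds in the requirement $x_i\in H_f$ (indeed it allows arbitrary prescribed jets along a finite closed subscheme), so nothing remains to be proved beyond quoting it. What you propose is in substance a reconstruction of Poonen's proof of that refinement from the unconditioned smoothness theorem: restrict the closed-point sieve to $V_d=H^0(\mathbb{P}^n_k,\mathcal{I}_Z(d))$, verify via regularity that the $1$-jet evaluation maps at points $y\notin Z$ stay surjective on $V_d$ for $d$ large, and note that smoothness of $X\cap H_f$ at each $x_i$ excludes only a proper linear subspace of $V_d$, hence holds with positive probability. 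That is exactly how the Taylor-conditions version is obtained, so your route is correct; it simply reproves the cited result rather than invoking it. The one place your sketch understates the difficulty is the high-degree tail of the sieve: for $\deg y$ large relative to $d$ the jet map is no longer surjective, and Lang--Weil counting alone does not close the estimate --- one needs Poonen's decomposition $f=f_0+g_1^p x_1+\cdots+g_n^p x_n+h^p$ and the conditional-probability bound on the simultaneous vanishing of $f$ and its partial derivatives. Since you explicitly defer to Poonen's sieve for that step, this is a matter of attribution rather than a genuine gap.
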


\begin{proof} Gabber \cite[Corollary 1.6]{Gabber} proves the proposition under the assumption that $\chara (k) | d$. Poonen \cite[Theorem 1.2]{Poonen} proves it for large arbitrary
$d$. \end{proof}
Below we have to use the following standard connectivity fact about hypersurfaces, which is shown for example in \cite[Corollary III.7.9]{Hart}:

\begin{prop}\label{con}
If $H\subset \mathbb{P}^n_k$ is a hypersurface and if $X \subset \mathbb{P}^n_k$ is a closed subscheme with $\dim(X)\ge 2$, which is geometrically irreducible and 
smooth over $k$, then the intersection $H\cap X$ is geometrically connected.
\end{prop}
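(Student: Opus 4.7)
The plan is to reduce to the algebraically closed case and then run a direct cohomological argument, using that thickening the hypersurface does not change connectedness while it does buy us Serre vanishing.

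First, since geometric connectedness is stable under, and detected by, base change to $\bar k$, I may as well assume $k=\bar k$. If $H\supset X$ then $H\cap X=X$, which is irreducible and hence connected, so I may further assume $X\not\subset H$. Then $D:=X\cap H$ is an effective Cartier divisor on $X$ with $\mathcal{O}_X(D)\cong \mathcal{O}_X(d)$, where $d=\deg H$.

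Next, for each integer $m\ge 1$ let $D_m\subset X$ be the closed subscheme cut out by $f^m|_X$, where $f$ is a homogeneous form defining $H$. Since $(f^m|_X)$ and $(f|_X)$ have the same radical, $D_m$ and $D$ share the same underlying topological space, so it suffices to show that $D_m$ is connected for some large $m$. I would extract this from the exact sequence
\[
0\lr \mathcal{O}_X(-md)\lr \mathcal{O}_X\lr \mathcal{O}_{D_m}\lr 0
\]
and its associated long exact sequence. Since $\mathcal{O}_X(1)$ is ample and $X$ is positive-dimensional, $H^0(X,\mathcal{O}_X(-md))=0$ for every $m\ge 1$. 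The crucial input is $H^1(X,\mathcal{O}_X(-md))=0$ for $m\gg 0$: using that $X$ is smooth of dimension $n\ge 2$, Serre duality identifies this group with the dual of $H^{n-1}(X,\omega_X(md))$, which vanishes for $m\gg 0$ by Serre's vanishing theorem (this is precisely where $n\ge 2$, equivalently $n-1\ge 1$, is needed). Combined with $H^0(X,\mathcal{O}_X)=k$, which follows from $X$ being geometrically integral (smooth plus geometrically irreducible), the long exact sequence gives $H^0(D_m,\mathcal{O}_{D_m})=k$, and hence $D_m$, and therefore $D=X\cap H$, is connected.

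The principal obstacle is the $H^1$-vanishing: in characteristic zero one could quote Kodaira vanishing for the fixed divisor $H$, but here we want to stay characteristic-free, so the trick of passing to a high thickening $D_m$ and invoking Serre vanishing is what makes the argument work uniformly.
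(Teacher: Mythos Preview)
Your argument is correct. The paper does not give its own proof but simply refers to Hartshorne, Corollary~III.7.9, and what you have written is exactly the classical cohomological argument behind that reference: pass to the algebraic closure, thicken the hypersurface section, and use the short exact sequence together with $H^1$-vanishing to conclude $H^0(\mathcal{O}_{D_m})=k$. The only difference worth noting is that Hartshorne's version goes through the Enriques--Severi--Zariski lemma, which gives the needed $H^1$-vanishing for merely normal projective $X$; you instead invoke Serre duality, which is legitimate here because the hypothesis includes smoothness, but is a slightly less general route to the same vanishing. One cosmetic point: you reuse the letter $n$ for $\dim X$, whereas in the statement $n$ is the dimension of the ambient projective space; it would be cleaner to write $\dim X=r\ge 2$ and phrase the duality as $H^1(X,\mathcal{O}_X(-md))^\vee\cong H^{r-1}(X,\omega_X(md))$.
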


\begin{proof}[Proof of Bloch approximation.] In the first part of the proof we use induction on $\dim(X)$ to reduce to the case $\dim(X)=2$. In the second part we handle the case $\dim(X)=2$. 

{\bf 1st part:} Assume $\dim(X)>2$ and the theorem is known for two-dimensional schemes -- this case will be validated in the second part. 
´According to Proposition \ref{Chebotarev} we can find closed points $x_i\in X$ ($n< i\le m$) such that each
conjugacy class in $\Gal(Y/X)$ contains at least one Frobenius $Frob_{x_i}$ for some $i$. \\[2mm]
{\bf Claim} ($\dim>2$). {There exists a curve $C$ on $X$ which contains $x_i$ ($1\le i\le m$) as regular points.}
\begin{proof} 
We prove the claim by induction on $\dim(X)\ge 2$. The case $\dim(X)=2$ is shown in the 2nd part below.  Let $Z\subset |Spec(\mathcal{O})|$ be the image of the set of points $\{x_i| 1\le i\le m\}$
and denote $\eta$ the generic point of $Spec(\mathcal{O})$.
 Write $X$ as a subscheme of $\mathbb{P}^N_\mathcal{O}$. $\bar{X}$ will denote the closure of $X$ in $\mathbb{P}^N_\mathcal{O}$. 
Using Hironaka's resolution of singularities at the generic fibre we can assume
 without restriction that $\bar{X}_\eta$ is smooth over $F$. After replacing $F$ by the algebraic closure of
$F$ in $X_\eta$ we can assume that $\bar{X}_\eta$ is geometrically irreducible over $F$. 
Then we can find a prime ideal $p$ in $\mathcal{O}$ distinct from the primes
in $Z$ such that $\bar{X}_{\mathcal{O}_{p}}$ is smooth over $\mathcal{O}_{p}$ and such that $\bar{X}_p=\bar{X}\otimes \mathbf{k}(p)$ is irreducible. The latter because of
Zariski's connectedness theorem~\cite[III.11.3]{Hart}.
Let $A$ be the semi-local ring corresponding to the finite set of points $Z\cup \{ p\}$ of $Spec(\mathcal{O})$ and $I$ its Jacobson radical.
Proposition \ref{GP} says, that for $d\gg 0$ there exists a global section of  $\mathcal{O}_{\mathbb{P}^N_{A/I}}(d)$ 
which induces a hypersurface $H$ of $\mathbb{P}^N_{A/I}$ whose  intersection with $\bar{X}\otimes_\mathcal{O} A/I $ is proper, contains the points $x_i$ 
as smooth points and such that $H \cap \bar{X}_{p}$ is smooth.
Let $im:\mathbb{P}^N_{A/I} \to \mathbb{P}^N_{A}$ be the closed embedding.
The homomorphism
\[
im^*:H^0(\mathbb{P}^N_A ,  \mathcal{O}(d) ) \lr H^0(\mathbb{P}^N_{A/I} ,  \mathcal{O}(d) )
\]
is surjective due to the Chinese remainder theorem.
This enables us to lift $H$ to a hypersurface $H_A$ in $\mathbb{P}^N_A$. Observe that $H\cap \bar{X}_p$ is smooth and connected, the latter by Proposition \ref{con}.
This implies that $H_A \cap \bar{X}$ is irreducible.
Let $X'$ be a smooth connected subscheme of $X$ such that $X'\otimes A= H_A\cap X$. 
Then $X'$ is an integral smooth quasi-projective arithmetic scheme over $\mathcal{O}$ of dimension $\dim(X)-1$ containing the points
$x_i$, so that we can apply an induction to reduce to the case $\dim(X)=2$ which is shown below in the 2nd part of the proof.
\end{proof}

Now assume $Y_C=Y\times_X C$ was not irreducible, where $C$ is as in the claim. This would mean that
the composite $\pi_1(C) \to \pi_1(X) \to \Gal(Y/X)$ was not surjective. But as its image contains Frobenius elements in every conjugacy class of $\Gal(Y/X)$, this would give a 
contradiction because of Lemma \ref{GroupGen}.

{\bf 2nd part:} Now we assume $\dim(X)=2$ and consider an embedding $X\to \mathbb{P}^N_\mathcal{O}$.
Let again $Z\subset |Spec(\mathcal{O})|$ be the image of the set of points $\{x_i| 1\le i\le m\}$. Let $(X_0,\ldots ,X_N)$ be homogeneous
coordinates for $\mathbb{P}^N_\mathcal{O}$.
 After performing an $n$-uple
embedding and a linear change of variables we can without restriction assume that $x_i \notin H_{X_0}$ for $1\le i\le m$. Let $A$ be the semi-local
ring with Jacobson radical $I$ corresponding to the finite set of points $Z$ of $Spec(\mathcal{O})$.
Then by Proposition \ref{GP} for $d\gg 0$ we find a section $f'\in H^0(\mathbb{P}^N_{A/I},\mathcal{O}(d))$ such that the hypersurface $H_{f'}$ 
of $\mathbb{P}^N_{A/I}$ has proper intersection with $\bar{X} \otimes_\mathcal{O} A/I$ and the intersection contains the points $x_i$ as smooth points. 
Let $f\in H^0(\mathbb{P}^N_{A},\mathcal{O}(d))$ be a preimage of 
$f$ under the natural map $im_*$ described above.

Now consider the rational map $\phi:\mathbb{P}_A^N \to \mathbb{P}^1_A $ induced by $(X_0^d,f)$. After shrinking  $X$ we can assume that $\phi$ induces a
morphism $\phi|_X:X \to \mathbb{P}^1_\mathcal{O}$.
It is \'etale at the points $x_i$ for $1\le i\le n$. In fact it is enough to show the latter fibrewise for the fibres 
over $Z$ where it follows from the choice of $f'\in H^0(\mathbb{P}^N_{A/I},\mathcal{O}(d))$. Now by further shrinking $X$ around the points $\{x_i| 1\le i\le m\}$
 we can assume that $\phi|_X$ is \'etale.

According to Lemma \ref{fcov} we can find an open subscheme $U\subset \mathbb{P}^1_\mathcal{O}$ such that
$$(\phi|_{X})^{-1}(U) \to U$$ is an \'etale covering. Restricting $Y$ to $\phi|_X^{-1}(U)$ we obtain an \'etale covering of $U$ and denote by $Y_U\to U$ its Galois closure. Choose a finite set
 of closed points $x_j\in |U|$ ($n< j\le m$) such that each conjugacy class in $\Gal(Y_U /U)$ contains one of the Frobenius elements $Frob_{x_j}$ ($n<j \le m$).\\[2mm]
{\bf Claim} ($\dim=2$) {There exists a curve $C$ on $\mathbb{P}^1_\mathcal{O}$ which contains $\phi(x_i)$ ($1\le i\le n$) and
$x_j$ ($n<j\le m$) as regular points.}

\medskip

Let $C$ be as in the claim. The curve $(\phi|_X)^{-1}(C)$ is the curve we were looking for. 
In fact it is irreducible because $Y_U \times C$ is irreducible. The latter because otherwise $\pi_1(C\cap U) \to \pi_1(U) \to \Gal(Y_U/U)$ would not
be surjective in contradiction to Lemma \ref{GroupGen}, as the image contains Frobenius elements $Frob_{x_j}$ ($n<j\le m$) in every conjugacy class.
\end{proof}

\begin{lem}\label{GroupGen}
Let $G$ be a finite group and $D\subset G$ a subset which contains elements from every conjugacy class. Then $D$ generates $G$.
\end{lem}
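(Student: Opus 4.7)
The plan is to argue by contradiction using the classical counting fact that a proper subgroup of a finite group cannot cover the whole group by its conjugates. Let $H=\langle D\rangle$ be the subgroup generated by $D$, and suppose for contradiction that $H\neq G$.

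First I would translate the hypothesis on $D$ into a statement about $H$. Since $D\subset H$ and $D$ meets every conjugacy class of $G$, every conjugacy class of $G$ has a representative in $H$; equivalently, each $g\in G$ is conjugate to some element of $H$. This gives
\[
G=\bigcup_{x\in G} xHx^{-1}.
\]

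Second I would derive a contradiction by a standard counting argument. The distinct conjugates of $H$ are in bijection with the cosets of the normalizer $N_G(H)$, so there are exactly $[G:N_G(H)]\le [G:H]$ of them. Each has cardinality $|H|$ and all of them contain the identity, so
\[
\left|\bigcup_{x\in G} xHx^{-1}\right|\le 1+[G:H]\cdot(|H|-1)=|G|-[G:H]+1.
\]
Since $H$ is a proper subgroup, $[G:H]\ge 2$, and the right-hand side is at most $|G|-1<|G|$, contradicting the covering statement from the first step. Hence $H=G$, which is the claim.

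There is no real obstacle: the whole argument rests on the single counting inequality above, which is a classical observation (essentially Jordan's lemma) and requires only that $G$ be finite. No further input from the paper is needed.
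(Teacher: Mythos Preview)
Your proof is correct and follows essentially the same route as the paper: both show that the conjugates of $H=\langle D\rangle$ cover $G$ and then derive a contradiction from the counting inequality $|G|\le [G:H]\cdot|H|$ with a non-disjoint union when $[G:H]>1$. Your version is just a slightly more explicit form of the same argument, invoking $N_G(H)$ and the shared identity element to make the strict inequality precise.
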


\begin{proof} Let $H$ be the subgroup generated by $D$ and $i=[G:H]$. By assumption we cleary have
\[
G = \bigcup_{x\in G/H} x H x^{-1} \; .
\] 
If $i>1$ the union is not disjoint and counting the elements on both sides would give $\ord(G) < i \cdot \ord(H)$, which is a contradiction.
\end{proof}


\section{Splitting properties}

\noindent We will need two different splitting results, one global which is based on Chebotarev density and one local which
is based on class field theory of henselian local rings.

We begin with the global result which is fairly standard.

\begin{prop}[Global splitting]\label{GlobalSplit}
Let $X$ be an arithmetic scheme and $\phi:Y\to X$ be a connected \'etale covering which splits completely over all closed points of $X$.
Then $\phi$ is an isomorphism.
\end{prop}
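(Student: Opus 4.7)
The plan is to pass to the Galois closure of $\phi$ and then apply Chebotarev density (Proposition~\ref{Chebotarev}). Let $\tilde{Y}\to X$ be the Galois closure of $\phi:Y\to X$, set $G=\Gal(\tilde{Y}/X)$, and let $H\le G$ be the subgroup corresponding to the intermediate covering $Y$. By definition of the Galois closure, the normal core $N:=\bigcap_{g\in G} gHg^{-1}$ is trivial.

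The key translation step is to rephrase the complete-splitting hypothesis at a closed point $x\in |X|$ in group-theoretic terms. The geometric fibre of $Y$ over $x$ is the $\langle Frob_x \rangle$-set $G/H$, so $\phi$ splits completely at $x$ precisely when $Frob_x$ (any conjugacy-class representative) acts trivially on $G/H$, i.e.~$Frob_x \in N$. Since $N$ is normal, this is independent of the chosen representative. By hypothesis this holds for every $x\in |X|$, so \emph{every} Frobenius $Frob_x$ is trivial in $G$.

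Now Chebotarev density (Proposition~\ref{Chebotarev}) gives the contradiction. For any conjugacy class $R\subset G$ the density $D(\{x : Frob_x\in R\}) = \#R/\#G$ is strictly positive, hence every conjugacy class of $G$ contains at least one Frobenius. Combined with the previous paragraph, every conjugacy class of $G$ equals $\{e\}$, so $G=1$ and $\tilde{Y}\to X$ is an isomorphism. Since $\phi:Y\to X$ is dominated by $\tilde{Y}\to X$ and $Y$ is connected, $\phi$ is itself an isomorphism.

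I expect no serious obstacle here: the only subtle ingredient is the conjugacy-invariance of the translation, which is handled by observing that $N$ is normal; the rest is a direct application of the two results already quoted.
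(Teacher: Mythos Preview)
Your proof is correct and follows essentially the same approach as the paper: reduce to the Galois case and use Chebotarev density to conclude that all Frobenii being trivial forces $G=1$. Your version is in fact more carefully written, since you spell out via the normal core $N$ why the complete-splitting hypothesis on $Y$ forces $Frob_x=e$ in the Galois closure, whereas the paper simply asserts ``without restriction we can assume $\phi$ to be Galois'' and proceeds.
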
 
\begin{proof} Without restriction we can assume $\phi$ to be a Galois covering with Galois group $G$. Because
every closed point in $X$ splits completely, all the Frobenius elements are trivial in $G$. But according to Theorem \ref{Chebotarev} the Frobenius elements generate $G$, so $G$ is
trivial. \end{proof}

Now we come to the local result. Let $A$ be an excellent regular henselian local  ring of dimension $d$ with finite residue field. 
Let $X$ be a dense open subscheme of $Spec(A)$. Set $D=Spec(A)-X$. 
The next proposition is due to Saito \cite{Saito} for $d=2$. A geometric 
proof can be found in \cite{KeSch2}. Here we will give a new proof generalizing the
work of Saito.

\begin{prop}[Local splitting]\label{LocalSplit}
Let $\phi:Y\to X$ be an abelian Galois covering which splits completely over all closed points of $X$. Then
$\phi$ is an isomorphism.
\end{prop}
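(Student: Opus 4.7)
The plan is to proceed by induction on $d = \dim A$. The base case $d = 1$ is immediate: $A$ is a henselian discrete valuation ring, and $X$ is either $Spec(A)$ itself (in which case the unique closed point is the maximal ideal $m$, and since $A$ is henselian we have $\pi_1(Spec(A)) = \pi_1(Spec(\mathbf{k}))$, topologically generated by Frobenius at $m$, so splitting there forces the cover to be trivial) or the generic point $Spec(K)$ with $K$ the fraction field of $A$ (in which case splitting at the unique point of $X$ is by definition triviality of the cover).

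For the inductive step with $d \geq 2$, I would argue by contradiction: suppose $\phi:Y\to X$ is a non-trivial abelian Galois cover with group $G$ splitting at all closed points of $X$. The aim is to find a prime $\mathfrak{p}=(f)\subset A$ of height one with $A/\mathfrak{p}$ regular, so that $A' = A/\mathfrak{p}$ is an excellent regular henselian local ring of dimension $d-1$, and such that the hypersurface $H = V(f)\subset Spec(A)$ satisfies: (i) $H\cap X$ is dense open in $H$; (ii) the pullback $Y|_{H\cap X}\to H\cap X$ is still a Galois covering with group $G$; and (iii) $Y|_{H\cap X}$ splits at every closed point of $H\cap X$. Granted such an $f$, the inductive hypothesis applied inside $Spec(A')$ then forces $Y|_{H\cap X}$ to be trivial, contradicting (ii).

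The construction of $f$ is the technical heart. After algebraizing $A$ as the henselization of a local ring on a quasi-projective arithmetic scheme, a local version of Bloch approximation (Propositions~\ref{Approx} and \ref{GP}) produces a smooth hypersurface through any prescribed finite collection of closed points of the algebraization, which then lifts back to $f\in A$ via Hensel's lemma. To secure (ii), one combines the connectivity result Proposition~\ref{con} with a Chebotarev density argument (Proposition~\ref{Chebotarev}) on the algebraization, arranging that $H$ picks up closed points whose Frobenii cover every conjugacy class of $G$, so that the pullback stays connected with the same Galois group.

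The main obstacle will be condition (iii): transferring the splitting hypothesis from the closed points of $X$ to those of $H\cap X$. In Saito's two-dimensional case these two sets essentially coincide with height-one primes of $A$, so the transfer is automatic; for $d\geq 3$ the closed points of $H\cap X$ in general lie in deeper strata than the closed points of $X$. The resolution is to choose $f$ so that $H$ meets a dense set of closed points of $X$ transversally, and then to read off the splitting at the closed points of $H\cap X$ through the decomposition groups of nearby closed points of $X$ via the local class field theory of the associated henselian valuation rings --- this is the genuine generalization of Saito's argument from dimension two to arbitrary dimension.
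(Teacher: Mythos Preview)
Your hypersurface-induction approach is genuinely different from the paper's. The paper instead sets up a higher local class field theory: it builds an idele group out of Milnor $K$-groups indexed by Parshin chains on $(Spec(A),D)$, forms a class group $C(X)$, produces a reciprocity map $\rho:C(X)\to\p(X)$ via Kato's residue reciprocity, and then proves (generalizing Saito's two-dimensional argument using purity and approximation in Dedekind rings) that both $\rho$ and the natural map $\bigoplus_{x\in|X|}\mathbf{k}(x)^\times\to C(X)$ have dense image. The splitting hypothesis then makes the composite to $\Gal(Y/X)$ simultaneously surjective and zero.

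Your plan has a structural gap, and you have misidentified where it lies. Condition~(iii) is in fact \emph{automatic}: if $\mathfrak q$ is closed in $H\cap X$ then $f\in\mathfrak q$, so every prime strictly containing $\mathfrak q$ also contains $f$ and hence lies in $H$; closedness in $H\cap X$ therefore forces every such prime into $D$, which is exactly closedness of $\mathfrak q$ in $X$. But then condition~(ii) becomes \emph{unattainable} whenever $G\neq 1$: since $Y$ splits at every closed point of $H\cap X$, the inductive hypothesis applied to each connected component of $Y|_{H\cap X}$ shows that this pullback is a split cover --- never connected with full group $G$. Your Chebotarev-on-an-algebraization manoeuvre cannot repair this, because closed points of the arithmetic model (other than $z$ itself) do not specialize to $z$ and hence do not lie in the image of $Spec(A)\to Z$; arranging connectivity of $\tilde Y|_{\tilde H\cap\tilde X}$ on the algebraic side says nothing about $Y|_{H\cap X}$ after henselian base change. (Separately, not every excellent regular henselian local ring with finite residue field arises as such a henselization.) What the induction genuinely delivers is only that $\pi_1(H\cap X)\to G$ is zero for every regular hypersurface $H$, and to conclude $G=1$ from this you would need the images of the $\pi_1(H\cap X)$ to jointly generate $\pi_1(X)$ --- easy by purity when $D=\{m\}$, but neither addressed nor clear when $D$ has codimension one, which is precisely where the content of the proposition lies.
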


\begin{proof} The proof uses class field theory of henselian local rings. Without restriction we can assume $X\ne Spec(A)$. Let $\mathcal{P}$ be the set of Parshin chains on $Spec(A)$ of the form
$P=(p_0, \ldots , p_n)$ ($0\le n\le d$) such that $p_0,\ldots ,p_{n-1} \in D$ and $p_n\in X$.
Let $\mathcal{R}$ be the set of chains $P=(p_0, \ldots , p_n)$ ($0\le n <d$) such that $\dim(p_i)=i$ with
$p_i\in D$
for $0\le i<n$ and $\dim(p_n)= n+1$ with $p_n\in X$. For a chain $P=(p_0,\ldots ,p_n)$ we call $\dim(P)=\dim(p_n)$ the
dimension of $P$.
 Now remember that to every chain $P$ we can associate
a finite product of fields $\mathbf{k}(P)$ by a henselization process, see \cite[Section 1.6]{KS}. Define the idele group to be
\[
I(X)= \bigoplus_{P\in \mathcal{P}} K^M_{\dim(P)}(\mathbf{k}(P))\; .
\] 
We endow $I(X)$ with the following topology: A neighborhood base of the zero element is given by 
the subgroups 
\[
\bigoplus_{P\in \mathcal{P}\atop \dim(P)=d} K^M_{d}(\mathbf{k}(P),m) \le I(X)
\]
where $m\in \mathbb{N}$ and for a discrete valuation ring $(R,I)$ with quotient field $F$ we let 
$K^M_n(F,m)$ be the subgroup of $K^M_n(F)$ generated by symbols $\{1+ I^m , F^\times ,\ldots ,F^\times \}$.
Define the class group to be the obvious quotient
\[
C(X)=  \mathrm{coker} \left[\bigoplus_{P\in \mathcal{R}} K^M_{\dim(P)} (\mathbf{k}(P))   \to I(X)\right]  \; .
\]
Now it follows from a reciprocity result due to Kato \cite[Proposition 7]{Kato} that the natural reciprocity homomorphism
$I(X)\to \p (X)$ factors through a continuous $\rho: C(X) \to \p (X)$. The proposition follows from the next lemma whose
proof we leave to the reader. It is only a slight generalization of the two-dimensional case treated in
\cite{Saito}. The proof uses Zariski-Nagata purity of the branch locus and approximation in Dedekind rings, Lemma \ref{ElApprox}.
\begin{lem}
The map $\rho$ has dense image. The natural map 
\[
h:\bigoplus_{x\in |X|} \mathbf{k}(x)^\times \lr C(X)
\]
has dense image.
\end{lem}
It follows from the lemma that the composite
\[
\bigoplus_{x\in |X|} \mathbf{k}(x)^\times \stackrel{h}{\lr} C(X) \stackrel{\rho}{\lr} \p (X) \lr \Gal(Y/X)
\]
is surjective. But the splitting assumption of the proposition implies that it is the zero homomorphism. \end{proof}


\section{The Class Group}

\noindent Wiesend introduced a very simple class group for an arithmetic scheme $X$ which is an extension of the Chow group of zero cycles $CH_0(X)$.
For a curve $C$ on $X$ we let $C_\infty$ be $\bar C \backslash C$ together with the archimedean places of $\mathbf{k}(C)$, where $\bar C$
is the compactification of $C$ defined in Proposition~\ref{compcurve}. In other words $C_\infty$ is the set of places of $\mathbf{k}(C)$ which
do not lie over points of $C$. Let $\mathbf{k}(C)_v$ be
the completion of $\mathbf{k}(C)$ with respect to the place $v$.


\begin{defi}
The idele group of an arithmetic scheme $X$ is defined as the direct sum of topological groups 
\[
I(X)=\bigoplus_{x\in |X|} \mathbb{Z} \oplus \bigoplus_{C, v\in C_\infty} {\mathbf{k}(C)}_v^\times
\]
where $C$ runs over all curves on $X$. The finite idele group $I^f(X)$ is defined in the same way but without the archimedean summands.
\end{defi}

If $\dim(X)>1$ the group $I(X)$ is Hausdorff but not locally-compact. For its connected component of the identity element we have
\[
I(X)^0=\bigoplus \mathbb{C}^\times \oplus \bigoplus \mathbb{R}^\times_+ 
\]
where we sum over all archimedean valuations corresponding to curves on $X$.

The next lemma is essential for our approach to higher class field theory.

\begin{lem}\label{Base}
The open subgroups of $I(X)/I(X)^0$ form a neighborhood base of the zero element.
\end{lem}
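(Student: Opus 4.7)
The strategy is to analyze $I(X)/I(X)^0$ summand by summand and then use the direct sum topology. After quotienting by $I(X)^0$, we obtain
\[
I(X)/I(X)^0 \;=\; \bigoplus_{x\in |X|} \Z \;\oplus\; \bigoplus_{C,\, v\in C_\infty^{\mathrm{na}}} \mathbf{k}(C)_v^\times \;\oplus\; \bigoplus_{C,\, v\in C_\infty^{\mathbb{R}}} \{\pm 1\},
\]
where $C_\infty^{\mathrm{na}}$ denotes the non-archimedean places in $C_\infty$ and $C_\infty^{\mathbb{R}}$ the real archimedean places (complex archimedean summands die entirely since $\mathbb{C}^\times$ is connected, and $\mathbb{R}^\times/\mathbb{R}^\times_+ = \{\pm 1\}$).

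First I would check that in each individual summand the open subgroups form a neighborhood base of the identity: the $\Z$- and $\{\pm 1\}$-summands are discrete, so $\{0\}$ itself works; for a non-archimedean local field $\mathbf{k}(C)_v$ the higher unit groups $1+\mathfrak{m}_v^n$ are open subgroups of $\mathbf{k}(C)_v^\times$ which form a fundamental system of neighborhoods of $1$ by definition of the $v$-adic topology. This is the only place where one uses anything beyond formalities; it is routine.

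Next I would invoke the defining property of the direct sum topology on a family $\{G_i\}$ of topological groups, namely that a fundamental system of neighborhoods of $0$ is given by the sets $\bigoplus_i U_i$ where $U_i$ runs over neighborhoods of $0$ in $G_i$. Given any neighborhood $V$ of $0$ in $I(X)/I(X)^0$, I would choose, for each $i$, an open \emph{subgroup} $U_i$ of the $i$-th summand contained in some neighborhood with $\bigoplus_i U_i \subset V$. Then $U := \bigoplus_i U_i$ is simultaneously a subgroup of $I(X)/I(X)^0$ (being a sum of subgroups) and open (being a basic open neighborhood of $0$), and $U\subset V$.

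The main (and essentially only) obstacle is setting up the decomposition of $I(X)/I(X)^0$ correctly — in particular checking that $I(X)^0$ really equals the displayed subgroup of archimedean connected components, which amounts to the fact that the discrete and non-archimedean factors are totally disconnected while $\mathbb{C}^\times$ and $\mathbb{R}^\times_+$ are connected. Once that is in place, the argument is a mechanical combination of local topological group theory with the universal property of the direct sum topology.
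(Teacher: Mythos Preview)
Your argument hinges entirely on the assertion that a fundamental system of neighborhoods of $0$ in $\bigoplus_i G_i$ is given by the boxes $\bigoplus_i U_i$ with each $U_i$ a neighborhood of $0$ in $G_i$. Under that convention the lemma is indeed immediate from the fact that each non-archimedean local factor has a basis of open subgroups at the identity. However, the paper's own proof makes clear that this box description is \emph{not} being taken as the definition: the author works with the inductive-limit topology coming from the finite sub-sums $I_n=\bigoplus_{i\le n} F_i^\times$, and the nontrivial content of the lemma is precisely that every neighborhood of $0$ in this topology contains such a box. To see why this is not automatic, note that an open set $O$ in the final topology is only required to have $O\cap I_n$ open for each $n$; given an open $U_n\subset O\cap I_n$ there is a priori no single neighborhood $V\subset F_{n+1}^\times$ with $U_n\times V\subset O$, since the admissible size of the $(n{+}1)$-st coordinate may shrink as one moves around in $U_n$.

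The paper closes this gap by exploiting local compactness of the non-archimedean factors: one chooses $U_n$ to be a \emph{compact} open subgroup of $I_n$, covers it by finitely many sets $O_x$ on each of which a fixed open compact subgroup $O'_x\subset F_{n+1}^\times$ satisfies $O_x\times O'_x\subset O$, and sets $U_{n+1}=U_n\oplus\bigcap_{x} O'_x$. This compactness step is the essential idea absent from your outline; without it (or without explicitly adopting the box topology as the definition and verifying that the rest of the paper is compatible with that choice), the argument is incomplete.
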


\begin{proof} $I(X)$ can be decomposed as $I(X)=I^{\mathrm{a}}(X) \oplus I^{\mathrm{na}}(X) \oplus I^{|X|}(X)$
in an archimedean, non-archimedean and closed point part. Then $I(X)^0$ is a subgroup of $I^\mathrm{a}(X)$ with discrete quotient
group. $I^{|X|}(X)$ is discrete too. So it suffices to show that the open subgroups of $I^{\mathrm{na}}(X)$ form a neighborhood base of the zero element.
The set of curves on an arithmetic scheme is at most countable, so if we assume $\dim(X)>1$ we can write
\[
I^{\mathrm{na}}(X) = \bigoplus_{i\in \mathbb{N} }F^\times_i
\]
for local non-archimedean fields $F_i$ ($i\in \mathbb{N}$). Let $O\subset I^{\mathrm{na}}(X)$ be an open neighborhood of the
zero element. We will successively construct open compact subgroups $U_n$ of $I_n = \oplus_{i\le n} F^\times_i$ contained
in $O$. Suppose we have constructed $U_n$ for some $n$. For each $x\in I_n$ choose an open compact subgroup $O'_x \subset
F^\times_{n+1}$ and an open neighborhood $O_x$ of $x$ in $U_n$ such that $O_x \times O'_x \subset O$. As $U_n$
is compact there exists a finite set $W\subset U_n$ with $\cup_{x\in W} O_x = U_n$. Now set 
$U_{n+1}= U_n \oplus (\cap_{x\in W} O'_x )$. It is now clear that $\cup_{n\in \mathbb{N}} U_n$ is an open subgroup
of $I^{\mathrm{na}}(X)$ contained in $O$.\end{proof}

\begin{defi}
The class group is defined to be
\[
C(X)=   \coker[\bigoplus_{C} \mathbf{k}(C)^\times \to I(X)]
\]
with the quotient topology. The finite variant $C^f(X)$ is defined by the same formula  replacing $I(X)$ by $I^f(X)$.
\end{defi}

\begin{rem}
In~\cite[Example 7.1]{KeSch} it is shown that for $X=\mathbb{P}^1_\mathbb{Z}$ the class group $C(X)$ is not Hausdorff. This might suggest
to replace the class group by its Hausdorff quotient. Nevertheless it often seems difficult to explicitly determine the Hausdorff quotient even for simple arithmetic
schemes, so we do not pursue this approach here. 
\end{rem}

\begin{lem}\label{Predense}
The image of $\bigoplus_{x\in |X|} \mathbb{Z} \to C(X)$ is dense in $C(X)$.
If moreover $X$ is regular and $U\subset X$ is a dense open subscheme the image of $\bigoplus_{x\in |U|} \mathbb{Z} \to C(X)$ is dense in $C(X)$.
\end{lem}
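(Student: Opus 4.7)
The plan is to use weak approximation (Lemma~\ref{ElApprox}) on the function fields of curves on $X$ to rewrite an arbitrary idele as a zero-cycle, modulo both the relations $\bigoplus_C \mathbf{k}(C)^\times$ and a prescribed open neighborhood of $0$ in $I(X)$. The first statement will be a direct application of this idea, and the second is a refinement using the regularity assumption.

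For the first statement, given $\alpha \in I(X)$ and an open neighborhood $V$ of $0$, I would decompose $\alpha = \alpha_0 + \sum_j \beta_j$ with $\alpha_0 \in \bigoplus_{x \in |X|} \Z$ and $\beta_j \in \mathbf{k}(C_j)_{v_j}^\times$ for a finite family $(C_j, v_j)$ with $v_j \in (C_j)_\infty$. For each curve $C$ occurring, $C_\infty$ is finite, and $V \cap \mathbf{k}(C)_v^\times$ is an open neighborhood of $1$ at each $v \in C_\infty$. Weak approximation on the global field $\mathbf{k}(C)$ at the places $C_\infty$ then furnishes $b_C \in \mathbf{k}(C)^\times$ with $\beta_j\cdot b_C^{-1} \in V$ at each $v_j$ for which $C_j = C$, and $b_C \in V$ at every other $v \in C_\infty$. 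Subtracting the image of each $b_C$ in $I(X)$ from $\alpha$ yields an idele whose local components all lie in $V$ and whose zero-cycle part is $\alpha_0 - \sum_C \mathrm{div}_C(b_C) \in \bigoplus_{x \in |X|} \Z$, proving the density assertion.

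For the second statement, granting the first, it suffices to approximate $[x]$ for each $x \in |X| \setminus |U|$ by cycles supported on $|U|$ modulo $V$. The idea is to find an integral curve $C \subset X$ through $x$, regular at $x$, and not contained in $X \setminus U$: for $\dim X = 1$ take $C = X$, and for $\dim X \geq 2$ use that the regular local ring $\mathcal{O}_{X,x}$ admits a regular system of parameters, a generic $(d-1)$-subset of which cuts out a regular curve germ at $x$ whose generic point avoids the (finitely many) irreducible components of $X \setminus U$ through $x$. Then $|C| \setminus |C \cap U|$ is a finite set of closed points of $C$, and weak approximation on $\mathbf{k}(C)$ provides $b \in \mathbf{k}(C)^\times$ that is a uniformizer at $x$, a unit at all other points of $|C| \setminus |C \cap U|$ (including every preimage in the normalization), and lies in $V \cap \mathbf{k}(C)_v^\times$ at each $v \in C_\infty$. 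The divisor identity $\mathrm{div}_C(b) = [x] + \sum_{y \in |C \cap U|} n_y[y]$ then shows $[x] \equiv -\sum_{y \in |C \cap U|} n_y[y]$ modulo $V$ in $C(X)$.

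The main obstacle will be the construction in the second part of the curve $C$ through $x$ that is regular at $x$ and meets $U$: regularity of $\mathcal{O}_{X,x}$ yields the regular system of parameters and makes the curve germ regular at $x$, but the genericity argument needed to force the curve to escape every component of $X \setminus U$ through $x$ requires some care, since in dimension $d \geq 2$ a one-dimensional subscheme can a priori sit inside a $(d-1)$-dimensional closed subset.
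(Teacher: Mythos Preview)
Your proof is correct and follows essentially the same route as the paper: both parts reduce to weak approximation (Lemma~\ref{ElApprox}) on the function fields of curves on $X$, and for the second part both invoke the existence of a curve through $x$ that is regular at $x$ and meets $U$ (the paper just asserts this, remarking ``Here we need the regularity of $X$'', while you sketch the regular-system-of-parameters construction and correctly flag it as the delicate step). The only cosmetic difference is that the paper packages the first reduction via the pushforwards $i_*:C(\tilde D)\to C(X)$ from normalized curves, whereas you manipulate ideles directly.
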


\begin{proof} Let $C\subset C(X)$ be a closed subset containing
the image of $\bigoplus_{x\in |X|} \mathbb{Z} \to C(X)$. We have to show $C=C(X)$. For a curve $D$ on $X$ let $\tilde D$ be the normalization and $i:\tilde D\to X$ the natural morphism. The closed set $i_*^{-1}(C)$ contains
the image of $\bigoplus_{x\in |\tilde D|} \mathbb{Z} \to C(\tilde D)$ so by the weak approximation lemma, Lemma~\ref{ElApprox},
$i_*^{-1}(C) = C(\tilde D)$. This means that $C=C(X)$.

Let now $U$ be a dense open subscheme of a regular $X$. By what has just been shown it is enough to verify that the image of
$\bigoplus_{x\in |U|} \mathbb{Z} \to C(X)$ is dense in the image of $\bigoplus_{x\in |X|} \mathbb{Z} \to C(X)$.  
Fix a closed point $x\in |X|$ and choose a curve $D$ which contains $x$ as a regular point and which meets $U$. Here
we need the regularity of $X$. 
Denote by $1_x$ the element of $C(X)$ which has vanishing summands except at $x$, where it is $1\in \mathbb{Z}$. By the choice of $D$ we have
$1_x\in \im [C(\tilde D) \to C(X) ]$. As the image of $\bigoplus_{x\in |\tilde D\times U|} \mathbb{Z} \to C(\tilde D)$ is dense by Lemma~\ref{ElApprox},
we conclude that $1_x$ lies in the closure of the image of $\bigoplus_{x\in |U|} \mathbb{Z} \to C(X)$.
 \end{proof}

\begin{prop}\label{Intersec}
The intersection of all open subgroups of $C(X)$ is the connected component of the identity $C(X)^0$ in $C(X)$, which is also the closure of the image of $I(X)^0$.
\end{prop}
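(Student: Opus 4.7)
My plan is to prove the proposition via the chain of inclusions
\[
\overline{H} \subseteq C(X)^0 \subseteq N \subseteq \overline{H},
\]
where $q:I(X)\to C(X)$ is the canonical projection, $H = q(I(X)^0)$, and $N$ denotes the intersection of all open subgroups of $C(X)$.

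The first inclusion is formal. The group $I(X)^0 = \bigoplus \mathbb{C}^\times \oplus \bigoplus \mathbb{R}^\times_+$ is the directed union of its finite partial sums, each of which is a product of connected groups and hence connected; so $I(X)^0$ is connected, its image $H$ in $C(X)$ is connected, and the closure $\overline{H}$ is a connected subset of $C(X)$ containing the identity, so it lies in $C(X)^0$. The second inclusion uses only that open subgroups of a topological group are automatically closed: if $U \le C(X)$ is open, then $C(X)^0 \cap U$ is a nonempty clopen subset of the connected space $C(X)^0$, forcing $C(X)^0 \subseteq U$.

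The substantive step is $N \subseteq \overline{H}$, and this is where I would invoke Lemma~\ref{Base}. The clopen argument above also shows that every open subgroup of $C(X)$ contains $H$ and hence $\overline{H}$, so open subgroups of $C(X)$ correspond bijectively to open subgroups of the quotient group $G := C(X)/\overline{H}$; the claim then reduces to showing that the intersection of the open subgroups of $G$ is trivial. Since $\overline{H}$ is closed, $G$ is Hausdorff. The composition $I(X) \to C(X) \to G$ is a continuous open surjection (a composition of quotient maps of topological groups by subgroups) that annihilates $I(X)^0$, hence descends to a continuous open surjection
\[
\overline{q} : I(X)/I(X)^0 \lr G.
\]
Lemma~\ref{Base} supplies a neighborhood basis of zero in $I(X)/I(X)^0$ consisting of open subgroups; pushing this basis through the open surjective homomorphism $\overline{q}$ yields a neighborhood basis of zero in $G$ consisting of open subgroups. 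Hausdorff-ness of $G$ then forces the intersection of all open subgroups of $G$ to equal $\{0\}$, and pulling back finishes the proof.

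The only real technicality is the openness of $\overline{q}$, but this follows by applying the standard fact that a quotient of a topological group by an arbitrary subgroup is an open map to the two successive quotients $I(X)\to C(X)$ and $C(X)\to G$; once this is in place, Lemma~\ref{Base} does all the work.
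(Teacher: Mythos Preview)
Your proof is correct and follows essentially the same route as the paper: both arguments pass to the Hausdorff quotient $G = C(X)/\overline{\im(I(X)^0)}$, use the open surjection $I(X)/I(X)^0 \to G$, and invoke Lemma~\ref{Base} to produce a neighborhood basis of $0$ in $G$ consisting of open subgroups. Your write-up is a bit more explicit about the inclusion $C(X)^0 \subseteq N$ and about why $I(X)^0$ is connected, but the architecture is the same.
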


\begin{proof} Observe that the closure of $\im(I(X)^0)$ in $C(X)$ is contained in $C(X)^0$. So it suffices to show that $\overline{\im(I(X)^0)}\subset C(X)$ is the intersection of all open subgroups of $C(X)$.  It follows from Lemma \ref{HausGr} that $G:=C(X) /\overline{\im(I(X)^0)}$ is a Hausdorff group. So the intersection of all open sets
in $G$ containing $0$ is $\{ 0\}$. We have to show that every open subset $O$ 
of $G$ with $0\in O$ contains an open subgroup. The quotient map $$q:I(X)/I(X)^0\to G$$  is open.
According to Lemma \ref{Base} we can find an open subgroup $U\le I(X)/I(X)^0$ such that $U\subset q^{-1}(O)$. Then
$q(U)$ is the open subgroup of $G$ we are looking for. \end{proof}

\begin{prop}
For a morphism of arithmetic schemes $f:X\to Y$ there exists a unique continuous homomorphism
$f_*:C(X)\to C(Y)$ such that for every closed point $x\in |X|$, $y=f(x)$, the diagram
\[
\xymatrix{
C(x)=\mathbb{Z}  \ar[r]  \ar[d]_{\deg(\mathbf{k}(x)/\mathbf{k}(y))} &  C(X) \ar[d]^{f_*}\\
C(y) = \mathbb{Z}  \ar[r] & C(Y)
}
\]
commutes. The left vertical arrow is multiplication by $\deg(\mathbf{k}(x)/\mathbf{k}(y))$.
\end{prop}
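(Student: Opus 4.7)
The plan is to build $f_*$ first at the level of ideles, $\tilde f_*: I(X) \to I(Y)$, summand by summand, check continuity, and then show that it descends to the class-group quotients; uniqueness will come from density of closed-point ideles via Lemma~\ref{Predense}. Since $X$ and $Y$ are finite type over $\mathbb{Z}$, hence Jacobson, and $f$ is of finite type, $f$ sends closed points to closed points, so on $\mathbb{Z}_x$ (for $x \in |X|$) I set $1_x \mapsto [\mathbf{k}(x):\mathbf{k}(f(x))]\cdot 1_{f(x)}$. On $\mathbf{k}(C)_v^\times$ (for a curve $C \subset X$, $v \in C_\infty$) I split into two cases according to the image $f(C)$. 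If $f(C)$ is a closed point $y \in |Y|$, then $\mathbf{k}(C)$ is a function field over $\mathbf{k}(y)$ with no archimedean places, so $v$ corresponds to a closed point $w \in \bar C \setminus C$, and I send $\alpha \mapsto v(\alpha)\,[\mathbf{k}(w):\mathbf{k}(y)]\cdot 1_y$. If $f(C)$ is a curve $C' \subset Y$, the induced map $\bar C \to \bar{C'}$ takes $v$ to a place $v'$ of $\mathbf{k}(C')$, and I use the local norm $N_{\mathbf{k}(C)_v/\mathbf{k}(C')_{v'}}(\alpha)$: it lands in $\mathbf{k}(C')_{v'}^\times$ if $v' \in C'_\infty$, and in $\mathbb{Z}_{y'}$ via $v'(\cdot)\cdot 1_{y'}$ if $v'$ corresponds to a closed point $y' \in |C'|$. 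Each summand map is continuous (norms of local fields are continuous, and the valuation--degree maps land in discrete summands), hence so is $\tilde f_*$ by the universal property of the direct sum topology.

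For the descent I must verify that for every curve $C \subset X$ and $\alpha \in \mathbf{k}(C)^\times$, $\tilde f_*(\alpha) \in I(Y)$ becomes trivial modulo the image of $\bigoplus_{C''}\mathbf{k}(C'')^\times$. In the first case, summing the contributions from all $x \in |C|$ and all $w \in \bar C \setminus C$ yields $\bigl(\sum_{w \in |\bar C|} v_w(\alpha)[\mathbf{k}(w):\mathbf{k}(y)]\bigr)\cdot 1_y$, which vanishes by the product formula on the proper curve $\bar C$ over $\mathbf{k}(y)$. In the second case, I expect $\tilde f_*(\alpha)$ to be exactly the idele attached to $\beta := N_{\mathbf{k}(C)/\mathbf{k}(C')}(\alpha) \in \mathbf{k}(C')^\times$; this follows from the classical local-global identities $v'(N_{L/K}(\alpha)) = \sum_{v\mid v'}[\mathbf{k}(v):\mathbf{k}(v')]\,v(\alpha)$ at finite places and $N_{L/K}(\alpha) = \prod_{v\mid v'} N_{L_v/K_{v'}}(\alpha)$ at the remaining places, applied to $\mathbf{k}(C)/\mathbf{k}(C')$. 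Uniqueness of the continuous lift $f_*$ is then immediate: the prescribed diagram determines $f_*$ on $\bigoplus_{x\in|X|}\mathbb{Z}$, whose image in $C(X)$ is dense by Lemma~\ref{Predense}.

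I expect the main technical obstacle to be the bookkeeping in the second case: places of $C_\infty$ may lie above places of $C'_\infty$ or above closed points of $C'$, and closed points of $|C|$ can sit in fibers of various sizes. Matching each contribution to $\tilde f_*(\alpha)$ against the corresponding contribution coming from $\beta$ requires invoking the appropriate compatibility (norm versus discrete valuation versus residue-field degree) in each flavour of place, and some care is needed to see that the partition of places of $\mathbf{k}(C)$ into ``finite for $C$'' versus ``in $C_\infty$'' interacts correctly with the analogous partition for $C'$ when passing from $I(X)$ to $I(Y)$.
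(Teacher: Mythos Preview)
Your argument is correct and follows exactly the standard construction that the paper has in mind: the paper's own proof consists only of the remark that uniqueness follows from Lemma~\ref{Predense} and that existence ``can be shown similarly to~\cite[Lemma 7.3]{KeSch}'', and what you have written is precisely that construction spelled out. One small point of phrasing: when $f(C)$ is one-dimensional it need not be closed in $Y$, so you should take $C'=\overline{f(C)}$ rather than $f(C)$ itself; and in the product-formula step you should sum over the closed points of the normalization $\tilde C$ (or equivalently of the regular compactification) rather than of $C$, but this is the same gloss the paper itself makes throughout.
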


\begin{proof} Uniqueness follows from Lemma \ref{Predense}. Existence can be shown similarly to~\cite[Lemma 7.3]{KeSch} \end{proof}

\begin{prop}[Reciprocity]
There exists a unique continuous homomorphism $\rho: C(X) \to \p (X)$ such that for every closed point
$x\in  |X|$ the diagram
\[
\xymatrix{
C(x) =\mathbb{Z} \ar[r] \ar[d]_{Frob} &  C(X) \ar[d]^{\rho}\\
\p (x) = \hat{\mathbb{Z}}   \ar[r] & \p (X) 
}
\]
commutes.
\end{prop}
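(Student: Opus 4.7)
The plan is to first construct a continuous homomorphism $\tilde\rho:I(X)\to\p(X)$ on the idele group and then show that it descends through the defining quotient to the required $\rho:C(X)\to\p(X)$. I define $\tilde\rho$ summand by summand. On the $\mathbb{Z}$-summand at a closed point $x\in|X|$ send $1\mapsto Frob_x$, which is automatically continuous since this summand is discrete. On the summand $\mathbf{k}(C)_v^\times$ attached to a curve $C\subset X$ and a place $v\in C_\infty$, compose the one-dimensional local reciprocity homomorphism $\mathbf{k}(C)_v^\times \lr \p(\mathbf{k}(C)_v)$ with the natural continuous map $\p(\mathbf{k}(C)_v)\lr\p(X)$ induced by $\mathrm{Spec}\,\mathbf{k}(C)_v\lr\tilde C\lr X$, where $\tilde C$ is the normalization of $C$. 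Since $I(X)$ carries the topology of a direct sum of topological groups and each summand contribution is continuous, the resulting $\tilde\rho$ is continuous.

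The crucial step is to show that for every curve $C\subset X$ the composition $\mathbf{k}(C)^\times\hookrightarrow I(X)\stackrel{\tilde\rho}{\lr}\p(X)$ vanishes. Let $\tilde C$ be the normalization of $C$ and, using Lemma~\ref{fcov}, pick a dense open $U\subset\tilde C$ whose closed points all map to closed points of $X$. Every place of $\mathbf{k}(C)=\mathbf{k}(\tilde C)$ not lying over a closed point of $U$ belongs either to $C_\infty$ or comes from a closed point of $\tilde C\setminus U$ that also maps into $|X|$; hence the map $\mathbf{k}(C)^\times\to I(X)$ factors through the classical one-dimensional idele group of the open subscheme $U$, and the further composition $\mathbf{k}(C)^\times\to C(U)\to \p(U)\to\p(X)$ vanishes by the one-dimensional Artin reciprocity of Proposition~\ref{OneClass}. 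Therefore $\tilde\rho$ descends to a homomorphism $\rho:C(X)\to\p(X)$, and $\rho$ is continuous because $C(X)$ carries the quotient topology.

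The Frobenius compatibility at each closed point $x$ is built into the construction. Uniqueness is immediate from Lemma~\ref{Predense}: the image of $\bigoplus_{x\in|X|}\mathbb{Z}$ is dense in $C(X)$ and $\p(X)$ is profinite, hence Hausdorff, so any continuous homomorphism $C(X)\to\p(X)$ with the prescribed values at closed points is determined.

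I expect the main obstacle to be the descent step. Identifying the contributions of a given curve $C\subset X$ inside $I(X)$ with those in the classical idele group of a dense open in $\tilde C$ requires care: one must track how the $\mathbb{Z}$-summands at closed points of $\tilde C$ pair with the $\mathbb{Z}$-summands at closed points of $X$ under the normalization $\nu:\tilde C\to X$, and check that the degree factors $\deg(\mathbf{k}(\tilde x)/\mathbf{k}(\nu(\tilde x)))$ arising from $\nu$ match the identity $\nu_* Frob_{\tilde x}=\deg(\mathbf{k}(\tilde x)/\mathbf{k}(\nu(\tilde x)))\cdot Frob_{\nu(\tilde x)}$ in $\p(X)$. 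Once this bookkeeping is handled, the vanishing reduces cleanly to one-dimensional reciprocity.
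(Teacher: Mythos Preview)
Your approach is correct and matches what the paper does: the paper's own proof is just a two-line reference, citing Lemma~\ref{Predense} for uniqueness and \cite[Proposition~7.5]{KeSch} for existence, and your construction is precisely the standard one carried out in that reference---build $\tilde\rho$ on $I(X)$ from Frobenius at closed points and local reciprocity on the curve summands, then descend via one-dimensional global reciprocity.

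One small correction: your appeal to Lemma~\ref{fcov} is misplaced, since that lemma concerns \'etale morphisms and the composite $\tilde C\to C\hookrightarrow X$ is generally not \'etale. Fortunately you do not need it: for any morphism between schemes of finite type over $\mathbb{Z}$, closed points (having finite residue field) map to closed points, so you may simply take $U=\tilde C$ and identify $\tilde C_\infty$ with $C_\infty$ directly. With this fix the bookkeeping you flag at the end---matching the degree factors $[\mathbf{k}(\tilde x):\mathbf{k}(\nu(\tilde x))]$ in the pushforward on $\mathbb{Z}$-summands with the relation $\nu_*Frob_{\tilde x}=[\mathbf{k}(\tilde x):\mathbf{k}(\nu(\tilde x))]\cdot Frob_{\nu(\tilde x)}$ in $\p(X)$---is exactly what makes the diagram
\[
\xymatrix{
\mathbf{k}(C)^\times \ar[r]\ar[d] & I(\tilde C)\ar[r]\ar[d] & C(\tilde C)\ar[r]^-{\rho_{\tilde C}}\ar[d] & \p(\tilde C)\ar[d] \\
\mathbf{k}(C)^\times \ar[r] & I(X)\ar[r] & C(X)\ar@{-->}[r] & \p(X)
}
\]
commute, and the top row vanishes by Proposition~\ref{OneClass}.
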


\begin{proof} Again uniqueness follows from Lemma \ref{Predense} and existence is shown in analogy to~\cite[Proposition 7.5]{KeSch}. \end{proof}

\begin{coro}
For a morphism of arithmetic schemes $f:X\to Y$ the diagram 
\[
\xymatrix{
C(X) \ar[r]^\rho \ar[d]_{f_*} & \p (X)  \ar[d]^{f_*} \\
C(Y) \ar[r]_\rho  & \p (Y) 
}
\]
commutes.
\end{coro}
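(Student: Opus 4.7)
The plan is to reduce to a check on the dense subgroup $\bigoplus_{x\in |X|}\mathbb{Z}\subset C(X)$ supplied by Lemma~\ref{Predense}. Both composites $f_*\circ\rho$ and $\rho\circ f_*$ are continuous homomorphisms from $C(X)$ to $\p(Y)$, so if they agree on the image of $\bigoplus_{x\in|X|}\mathbb{Z}$, and if $\p(Y)$ is Hausdorff (which it is, being a profinite group), then they must be equal.

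So the only thing to verify is that for every closed point $x\in|X|$, with $y=f(x)$, one has the equality
\[
f_*(\rho(1_x))=\rho(f_*(1_x))
\]
in $\p(Y)$, where $1_x$ denotes the generator of the $\mathbb{Z}$-summand at $x$. Applying the two previous propositions gives: the left-hand side equals $f_*(\mathrm{Frob}_x)$, and the right-hand side equals $\rho(\deg(\mathbf{k}(x)/\mathbf{k}(y))\cdot 1_y)=\deg(\mathbf{k}(x)/\mathbf{k}(y))\cdot\mathrm{Frob}_y$. Thus the corollary reduces to the standard functoriality of Frobenius in the abelian fundamental group, namely
\[
f_*(\mathrm{Frob}_x)=[\mathbf{k}(x):\mathbf{k}(y)]\cdot\mathrm{Frob}_y\in\p(Y).
\]
This identity is classical: under the residue-field map $\mathbf{k}(y)\hookrightarrow\mathbf{k}(x)$, the geometric Frobenius of $\mathbf{k}(x)$ restricts to the $[\mathbf{k}(x):\mathbf{k}(y)]$-th power of the geometric Frobenius of $\mathbf{k}(y)$; combined with the commutative square of $\pi_1^{ab}$ induced by $x\to y\to Y$ and $x\to X\to Y$, it yields exactly the displayed formula.

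I do not expect a real obstacle here; the only subtlety is keeping track of the normalization of Frobenius and the degree factor, so that the uniqueness clauses in the two preceding propositions (characterizing $f_*$ and $\rho$ on $C(X)$) feed cleanly into each other. Once that bookkeeping is done, the density assertion of Lemma~\ref{Predense} together with continuity of all four maps in the square closes the proof.
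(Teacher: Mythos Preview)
Your proposal is correct and is essentially the same argument as the paper's. The paper phrases it slightly more tersely---it just says the result follows from the commutativity of the square
\[
\xymatrix{
C(x) \ar[r] \ar[d]  & \p (x) \ar[d] \\
 C(f(x)) \ar[r]  & \p(f(x))
}
\]
together with the two preceding propositions---but the content is identical: the uniqueness clauses in those propositions are proved via Lemma~\ref{Predense}, so your explicit density-plus-Hausdorff argument is exactly what underlies the paper's appeal to uniqueness.
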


\begin{proof} The corollary follows from the commutativity of the diagram
\[
\xymatrix{
C(x) \ar[r] \ar[d]  & \p (x) \ar[d] \\
 C(f(x)) \ar[r]  & \p(f(x)) 
}
\]
and the last two propositions, where $x\in X$ is an arbitrary closed point. \end{proof}

\begin{prop}\label{Dense}
The image of $\rho$ is dense in $\p(X)$. 
\end{prop}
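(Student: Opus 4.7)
The plan is to show that $\rho(C(X))$ hits every finite quotient $\pi_1^{ab}(X)/V$, as $V$ ranges over open subgroups of $\pi_1^{ab}(X)$. Since $\pi_1^{ab}(X)$ is a profinite group, the family of such quotients is cofinal in its topology, so surjectivity onto each of them is equivalent to density of the image of $\rho$. Concretely, I would fix an open subgroup $V\le \pi_1^{ab}(X)$ and consider the finite abelian group $A = \pi_1^{ab}(X)/V$, together with the composition $\bar\rho : C(X) \stackrel{\rho}{\lr} \pi_1^{ab}(X) \lr A$.

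The open subgroup $V$ pulls back, along $\pi_1(X)\to \p(X)$, to an open normal subgroup of $\pi_1(X)$ whose quotient is $A$; this corresponds to a connected abelian Galois covering $\phi:Y\to X$ with $\Gal(Y/X)\cong A$. The key input is the defining commutative square of $\rho$ from the preceding proposition: for every closed point $x\in |X|$ the element $1_x \in C(x)=\mathbb{Z}$ maps via $C(x)\to C(X)\to \p(X)$ to (a lift of) $Frob_x$. Consequently $\bar\rho(C(X))$ contains the image of every Frobenius $Frob_x$ in $A$.

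Next I would invoke generalized Chebotarev density (Proposition~\ref{Chebotarev}) applied to the Galois covering $Y\to X$: taking $R\subset A$ to be any single element, the set of closed points $x\in |X|$ with $Frob_x = r$ has positive density $1/\# A$, and in particular is nonempty. Hence every element of $A$ is realized as $Frob_x$ for some $x\in |X|$, and therefore lies in $\bar\rho(C(X))$. This shows $\bar\rho$ is surjective for every open $V$, so $\rho(C(X))$ is dense in $\p(X)$.

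There is no real obstacle in this argument; the only point that requires a little care is recording that open subgroups of the abelianization $\p(X)$ correspond bijectively to (connected) abelian Galois coverings of $X$, so that Chebotarev (which is formulated for Galois coverings) can be applied directly. Everything else is just chasing the definitions through the characterizing diagram for $\rho$.
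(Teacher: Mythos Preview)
Your argument is correct and is essentially the same as the paper's: both rest on Chebotarev density (Proposition~\ref{Chebotarev}) to realize elements of the finite quotient $\p(X)/V$ as Frobenius classes. The only cosmetic difference is that the paper packages this step as the Global Splitting result (Proposition~\ref{GlobalSplit}) --- an open subgroup $V$ containing $\rho(C(X))$ would give a nontrivial abelian covering split at every closed point, which Proposition~\ref{GlobalSplit} forbids --- whereas you invoke Chebotarev directly to hit each element of $A$; the underlying content is identical.
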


\begin{proof} Let $V\le \p (X)$ be an open subgroup containing the image of $\rho$. We have to show that $V=\p (X)$.
 If this was not the case $V$ would define a nontrivial abelian Galois covering of $X$ which would split completely over
all closed points of $X$. But this is impossible in view of Proposition~\ref{GlobalSplit}.
\end{proof}

\section{Connected Component}

\noindent In view of the one-dimensional case it is a natural question to ask whether for an arithmetic scheme $X$ the connected component of the identity $C(X)^0$ in the class group $C(X)$ is divisible. In fact in the one-dimensional case it is well known to be divisible and its torsion subgroup can be described at least conjecturally.
The next example shows that $C(X)^0$ is not divisible in general.

\begin{example}\label{examcon}
Let $K$ be a totally imaginary number field of class number $1$ and let $X$ be $\mathbb{A}^1_{\mathcal{O}_K}$. 
By Proposition~\ref{Intersec} we have an exact sequence of topological groups
\[
 \bigoplus \mathbb{C}^\times \lr C(X)^0 \lr C^f(X)^0\lr 0
\]
where the sum on the left is over all archimedean places of curves on $X$. Therefore it suffices to show that $C^f(X)^0$ is not divisible in order
to deduce that $C(X)^0 $ is not divisible.
First, we want to show that $\iota_x$ vanishes for all $x\in |X|$. Fix such an $x$ and let $\mathfrak{p}\subset \mathcal{O}_K$ be
the prime ideal over which $x$ lies, $x$ corresponds to
a monic irreducible polynomial over $\mathcal{O}_K/\mathfrak{p}$. If we lift this polynomial to a monic polynomial over $\mathcal{O}_K$ this lifted
polynomial will be irreducible and therefore gives us a curve  $C$ on $X$ which is finite over $\mathcal{O}_K$. By construction $C$ is \'etale and does not split over $\mathfrak{p}\subset \mathcal{O}_K$. In other words $\mathfrak{p}$ generates the prime ideal in $H^0(\tilde C,\mathcal{O}_{\tilde C})$ corresponding
to the point $x\in \tilde C$, which is therefore principal, since $\mathfrak{p}$ is itself principal ($\mathcal{O}_K$ has class number $1$). Summarizing we
get a commutative diagram
\[
\xymatrix{
\mathbb{Z} \ar[r]^{\iota_x} \ar[dr] &  C^f(X)  \\
 &  \ar[u] C^f(\tilde C)
}
\]
in which the diagonal arrow vanishes. Therefore $\iota_x$ vanishes.  
By Lemma~\ref{Predense} this means that $\{ 0 \}$ is dense in $C^f(X)$ and as $C^f(X)^0$ is a closed subgroup in $C^f(X)$
we deduce that $C^f(X)^0 = C^f(X)$. We will now show  that for any prime number $p$ the group $C^f(X)$ is not $p$-divisible. In fact
$I^f(X)$ contains a direct summand of the form $\mathbb{F}((t))$, where $\mathbb{F}$ is a finite field of characteristic $p$. As 
$\mathbb{F}((t))^\times / (\mathbb{F}((t))^\times)^p $ is uncountable the group $I^f(X)/ p I^f(X)$ is also uncountable. Finally we have a right
exact sequence
\[
\bigoplus_{C} \mathbf{k}(C)^\times  \lr  I^f(X)/p  \lr C^f(X)/p  \lr 0
\]
where the left group is countable, the group in the middle is uncountable, so that the group on the right cannot be trivial.
\end{example}

This example suggests that the only obstruction for $C(X)^0$ to be divisible comes from the local fields of positive characteristic. In fact
if these are absent $C(X)^0$ is divisible.

\begin{theo}\label{ConCom}
For an arithmetic scheme $X$ the connected component $C(X)^0$ in $C(X)$ is divisible if all vertical curves on $X$ are proper. 
\end{theo}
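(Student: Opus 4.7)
The plan is to use the hypothesis to force $C(X)/nC(X)$ to be discrete for every $n\ge 1$, deduce $C(X)^0\subseteq nC(X)$, and then close the divisibility argument via Proposition~\ref{Intersec} together with a reduction to one-dimensional class field theory.

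First I would check that, under the hypothesis, every non-archimedean completion $\mathbf{k}(C)_v^\times$ appearing in $I(X)$ is of characteristic zero. For a vertical curve $C$, $\mathbf{k}(C)$ is a positive-characteristic function field having no archimedean places, and properness of $C$ gives $\bar{C}=C$ and hence $C_\infty=\emptyset$, so $C$ contributes no completion factor to $I(X)$. For a horizontal curve, $\mathbf{k}(C)$ is a number field, so all of its completions are of characteristic zero. Consequently each summand $G_i$ of $I(X)$ satisfies $G_i/nG_i$ finite: the archimedean summands give $0$ or $\mathbb{Z}/2$, closed points give $\mathbb{Z}/n$, and a non-archimedean local field $F$ of characteristic zero has $F^\times/n$ finite (via $F^\times\cong\mathbb{Z}\oplus\mu(F)\oplus\mathbb{Z}_p^{[F:\mathbb{Q}_p]}$). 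A direct sum of discrete groups in the direct-sum topology is discrete, so $I(X)/nI(X)$ and hence its quotient $C(X)/nC(X)$ are discrete. Thus $nC(X)$ is an open, and therefore closed, subgroup of $C(X)$, and Proposition~\ref{Intersec} gives $C(X)^0\subseteq nC(X)$.

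For the divisibility itself, Proposition~\ref{Intersec} also identifies $C(X)^0=\overline{D}$ where $D:=\mathrm{im}(I(X)^0)$, and $D$ is divisible since $I(X)^0=\bigoplus\mathbb{C}^\times\oplus\bigoplus\mathbb{R}^\times_+$ is. In particular $D=nD\subseteq nC(X)^0$, so $nC(X)^0$ is a subgroup of $C(X)^0$ containing the dense subgroup $D$. The remaining task is to upgrade this inclusion of $D$ to an inclusion of $\overline{D}=C(X)^0$ inside $nC(X)^0$; equivalently, to show that $nC(X)^0$ is closed in $C(X)^0$. Granting this, $nC(X)^0\supseteq\overline{D}=C(X)^0$, which is exactly the divisibility.

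The main obstacle is precisely this closedness, since multiplication by $n$ on a closed subgroup need not produce a closed subgroup. My plan is to attack it through Bloch approximation (Proposition~\ref{Approx}) and the one-dimensional case, where Proposition~\ref{OneClass} already supplies divisibility of $C(\tilde{C})^0$. Concretely, any $c\in C(X)^0$ can be written as a limit $c=\lim\pi(y_k)$ with $y_k\in I(X)^0$ supported on archimedean places of finitely many curves; one lifts coordinatewise to $n$-th roots $z_k\in I(X)^0$ (using divisibility of $\mathbb{C}^\times$ and $\mathbb{R}^\times_+$), uses Bloch approximation to consolidate the supporting data to a curve $\tilde{C}_k$ on $X$ so that $\pi(z_k)$ lies in the image of the divisible group $C(\tilde{C}_k)^0$, and then must argue that these images converge in $C(X)^0$ to a valid $n$-th root $c''$ of $c$. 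The hypothesis is essential here: in its absence, Example~\ref{examcon} shows that the analogous convergence genuinely fails and $C(X)^0$ is not divisible, so any successful argument must at some point invoke the absence of $\mathbb{F}_q((t))$-summands in $I(X)$ — and it does so precisely through the discreteness of $C(X)/nC(X)$ in step one of the plan.
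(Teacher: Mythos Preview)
Your opening observation is correct and clean: under the hypothesis every non-archimedean summand of $I(X)$ is a characteristic-zero local field, so $I(X)/nI(X)$ and hence $C(X)/nC(X)$ is discrete, giving $C(X)^0\subseteq nC(X)$ for every $n$. You also correctly identify that this alone does not finish the job, and that the real issue is whether $nC(X)^0$ is closed in $C(X)^0$.

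The gap is precisely here. Your proposed attack on the closedness --- write $c=\lim \pi(y_k)$ with $y_k\in I(X)^0$, take $n$-th roots $z_k$, and ``consolidate the supporting data to a curve $\tilde C_k$'' via Bloch approximation --- does not work. Bloch approximation (Proposition~\ref{Approx}) produces curves through prescribed \emph{closed points} with irreducibility of a pullback; it says nothing about archimedean data, and the elements $y_k,z_k$ live entirely in the archimedean part $I(X)^0$. Even granting some curve $\tilde C_k$ carrying $\pi(z_k)$, there is no mechanism forcing the $\pi(z_k)$ to converge in $C(X)$: multiplication by $n$ is not proper here, and $C(X)$ need not even be first countable, so a sequential limit argument cannot characterize $\overline D$. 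In short, you have reduced the theorem to a statement (closedness of $nC(X)^0$) that is essentially as hard as the theorem itself, and the sketch you give does not address it.

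The paper's argument proceeds quite differently and is worth knowing. One first shows, via Proposition~\ref{Intersec}, that it suffices to prove $\overline{R(X)}/R(X)$ is divisible, where $R(X)$ is the image of $\bigoplus_C\mathbf{k}(C)^\times$ in $I(X)/I(X)^0$. Exhausting the curves by finite unions $C_i$, one proves two things: (1) each $\overline{R(C_i)}/R(C_i)$ is divisible, because $R(C_i)\cap U(C_i)$ is a finitely generated subgroup of the $\hat{\mathbb Z}$-module $U(C_i)=\bigoplus\mathcal O_{K_v}^\times$, so the quotient is a quotient of $(\hat{\mathbb Z}/\mathbb Z)^d$; and (2) $\overline{R(X)}=\varinjlim_i\overline{R(C_i)}$. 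Step~(2) is where the hypothesis is used decisively: since no positive-characteristic local fields appear, each $U(C_i)$ is a \emph{noetherian} $\hat{\mathbb Z}$-module, so the increasing chain of preimages of $\overline{R(C_j)\cap U(C_j)}$ in $U(C_i)$ stabilizes, which is exactly what one needs to show the direct limit is closed. This noetherian stabilization is the substitute for the closedness argument you were unable to supply.
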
 

\begin{rem} Let $X$ be an arithmetic scheme and let $K$ be the algebraic closure of $\mathbb{Q}$ in $\mathbf{k}(X)$.
Recall that all vertical curves on $X$ are proper if and only if the morphism $X\to \mathcal{O}_K$ is proper over its image. This can be seen as
follows: Assume all vertical curves on $X$ are proper and denote the image of $X\to \mathcal{O}_K$ by $U$. Choose a compactification $X\subset \bar{X} \to U$ as constructed by Nagata, for a modern presentation see for example~\cite{Luet}. We have to show $X=\bar X$. For this it suffices to verify 
that for all closed points $u\in U$ we have $X_u=\bar X_u$. Observe that our assumption on the properness of vertical curves implies that $X_u$
is an open and closed subscheme of $\bar{X}_u$.
As $\bar X \otimes K $ is geometrically connected over $K$ we deduce that for all closed $u\in U$ the fibre $\bar X_u$ is geometrically connected
by Zariski's connectedness theorem~\cite[III.11.3]{Hart}.
So we conclude $X_u=\bar X_u$.
\end{rem}

\noindent {\em Proof of Theorem \ref{ConCom}.}
Let $U(X)$ be the open subgroup of $I(X)/I(X)^0$ given by the sum $U(X)=\oplus \mathcal{O}_{K_v}^\times $ over all non-archimedean valuations
appearing in $I(X)$. Let $R(X)$ be the image of 
\[
\bigoplus_C \mathbf{k}(C)^\times \lr I(X) /I(X)^0\; .
\]
Proposition~\ref{Intersec} shows that  we have an exact sequence
\[
I(X)^0 \lr C(X)^0 \lr \overline{R(X)} /R(X)\lr 0\; .
\]
As $I(X)^0$ is clearly divisible it suffices to show that $\overline{R(X)}/R(X)$ is divisible.

\begin{lem}\label{helpcon}
Let $G$ be an abelian topological group, $U\le G$ an open subgroup and $R\le G$ an arbitrary subgroup. Then we have 
\[
\overline{R\cap U} \, \cdot R = \overline{R}\; .
\]
In particular, if $\overline{R\cap U} /R\cap U$ is divisible then $\overline{R}/R$ is divisible, too.
\end{lem}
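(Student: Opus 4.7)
The plan is to establish the identity $\overline{R\cap U}\cdot R = \overline{R}$ by a direct argument using that $U$ is open, and then deduce the divisibility statement by a short coset computation. Throughout I will write the group operation additively for readability, since $G$ is abelian.

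The inclusion $\overline{R\cap U}+R\subseteq\overline{R}$ is immediate: $\overline{R}$ is a subgroup (the closure of a subgroup in a topological group is always a subgroup), and it contains both $\overline{R\cap U}$ and $R$. For the reverse inclusion, I would take an arbitrary $x\in\overline{R}$ and use the openness of $U$ to produce $r_0\in R$ with $x-r_0\in U$: the set $x+U$ is an open neighborhood of $x$, so it meets $R$. The key claim is then that $x-r_0$ actually lies in $\overline{R\cap U}$. To verify this, let $V$ be any open neighborhood of $x-r_0$; then $(V\cap U)+r_0$ is an open neighborhood of $x$ (since $x-r_0\in U$ and $U$ is open), and by $x\in\overline{R}$ it contains some $r\in R$. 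The difference $r-r_0$ then lies simultaneously in $R$, in $U$, and in $V$, showing that every neighborhood of $x-r_0$ meets $R\cap U$. Hence $x=(x-r_0)+r_0\in\overline{R\cap U}+R$, as required.

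For the divisibility statement, assume $\overline{R\cap U}/(R\cap U)$ is divisible, and let $x\in\overline{R}$ and $n\ge 1$. Using the decomposition just proved, write $x=y+r$ with $y\in\overline{R\cap U}$ and $r\in R$. By divisibility of the quotient $\overline{R\cap U}/(R\cap U)$, there exist $z\in\overline{R\cap U}\subseteq\overline{R}$ and $s\in R\cap U\subseteq R$ with $y=nz+s$. Then $x-nz=s+r\in R$, so the coset of $x$ in $\overline{R}/R$ is $n$ times the coset of $z$. This proves $\overline{R}/R$ is divisible.

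I do not anticipate a genuinely hard step here; the only subtle point is the claim that $x-r_0\in\overline{R\cap U}$, which is where openness of $U$ is used in an essential way. The rest is formal manipulation of cosets.
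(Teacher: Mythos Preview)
Your proof is correct. The paper itself leaves this lemma to the reader, so there is no argument in the text to compare against; your direct verification---using openness of $U$ to translate into $U$ and then checking closure pointwise---is exactly the routine argument the author presumably had in mind.
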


The proof of the lemma is left to the reader.
Let $C'_i$ ($i\in  \mathbb{N}$) be a family of curves on $X$ containing each curve at least once and let $C_i$ be $\cup_{j\le i} C'_j$. Exactly as in the arithmetic case
we can define the groups $I(C_i), C(C_i), U(C_i)$ and $R(C_i)$.
Our aim is to show:
\begin{enumerate}
\item{$\overline{R(C_i)}/R(C_i)$ is divisible for every $i\in \mathbb{N}$,}
\item{ $\overline{R(X)}=\lim\limits_{\stackrel{\lr}{i}} \overline{R(C_i)} $.}
\end{enumerate}
Here $\overline{R(C_i)}$ means the closure of $R(C_i)$ in $I(C_i)/I(C_i)^0$.
 Indeed the two properties imply that
\[
\overline{R(X)}/R(X) = \lim_{\stackrel{\lr}{i}} \overline{R(C_i)} /R(C_i)
\]
is divisible and this will prove the theorem.
\begin{proof}[Proof of {\rm (1)}.]
It is well known that $U(C_i)$ is a $\hat{\mathbb{Z}}$-module and that $R(C_i)\cap U(C_i)$ is finitely generated as an abelian group. Choosing $d$
generators of this latter group we get a commutative diagram with exact rows and columns 
\[
\xymatrix{
\Z^d \ar[r] \ar[d]&  R(C_i)\cap U(C_i) \ar[d] \ar[r] & 0\\
\hat{\Z}^d  \ar[r]\ar[d]  & \overline{R(C_i)\cap U(C_i)} \ar[r] \ar[d]& 0\\
(\hat{\Z}/\Z)^d \ar[r]\ar[d] &  \overline{R(C_i)\cap U(C_i)} / R(C_i)\cap U(C_i) \ar[r] \ar[d] & 0\\
0 & 0& 
}
\]
and since $\hat{\Z}/\Z$ is divisible we conclude by Lemma~\ref{helpcon} that $\overline{R(C_i)}/R(C_i)$ is divisible.
\end{proof}

\begin{proof}[Proof of {\rm (2)}.]
By Lemma~\ref{helpcon} it suffices to show that 
\[
\overline{R(X)\cap U(X)}=\lim\limits_{\stackrel{\lr}{i}} \overline{R(C_i)\cap U(C_i)  } \; .
\]
Moreover the right hand side is automatically a dense subgroup of the left hand side, so we need to show that the right hand side is
closed in $U(X)$.
For $j>i$ consider the subgroup $S_{i,j}$ of $U(C_i)$ defined to be the preimage of $\overline{R(C_j)\cap U(C_j) }$ under the map $U(C_i)\to U(C_j)$.
$S_{i,j}$ is a closed subgroup of $U(C_i)$ and therefore a $\hat{\Z}$-submodule. Now the essential observation is that $U(C_i)$ is a noetherian
$\hat{\Z}$-module, since it is a finite sum of unit groups of local fields of characteristic $0$; in fact the assumption on the properness of
vertical curves guarantees that no local fields of positive characteristic show up, whose unit group is highly non-noetherian. This implies that for fixed $i$ the ascending sequence of
$\hat{\Z}$-submodules $S_{i,j} \le U(C_i)$ ($j>i$) becomes stationary at some point, in particular $$S_i:=\lim_{\stackrel{\lr}{j}} S_{i,j}$$
is a closed subgroup of $U(C_i)$. Furthermore it is immediate that 
\[
\lim\limits_{\stackrel{\lr}{i}} \overline{R(C_i)\cap U(C_i)  } =   \lim\limits_{\stackrel{\lr}{i}} S_i =: L \; .
\]
As we saw above it is sufficient to show that $L\le U(X)$ is a closed subgroup. By the definition of the direct limit topology $L$ is closed in $U(X)$ if and only if its preimages in the groups $U(C_i)$ are closed for all $i\in \mathbb{N}$. But the preimage of $L$ in $U(C_i)$ is $S_i$, which we
have just seen to be closed. This finishes the proof of property $\mathrm{(2)}$ and therefore of the theorem.
\end{proof}


\section{Isomorphism Theorem}\label{SecIso}

\noindent Let $\phi: Y\to X$ be a Galois covering of regular arithmetic schemes.

\begin{theo}[Isomorphism]\label{Iso}
The reciprocity map induces an isomorphism
\[
\rho_{Y/X}:C(X)/ \phi_*C(Y)  \tilde{\lr} \Gal(Y/X)^{ab}\; .
\]
\end{theo}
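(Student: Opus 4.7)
The plan is to reduce the higher-dimensional statement to the one-dimensional isomorphism theorem (Corollary~\ref{IsoOne}) by means of Bloch's approximation lemma. Write $G := \Gal(Y/X)$ and $\bar\rho := \rho_{Y/X}$ for the induced map $C(X)/\phi_*C(Y) \to G^{ab}$.

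Surjectivity of $\bar\rho$ is painless: by Proposition~\ref{Dense} the image of $\rho:C(X)\to \p(X)$ is dense, so composing with the surjection $\p(X) \twoheadrightarrow G^{ab}$ (onto because $Y/X$ is a connected Galois cover) produces a map with dense image in the finite discrete group $G^{ab}$, hence surjective.

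The heart of the argument is the following reduction lemma: for closed points $x_1,\dots,x_n \in |X|$ and integers $n_i$ with $\sum n_i\, Frob_{x_i} = 0$ in $G^{ab}$, the class $\sum n_i [x_i] \in C(X)$ already lies in $\phi_*C(Y)$. I would prove it by invoking Bloch approximation (Proposition~\ref{Approx}) to produce a curve $D \subset X$ containing every $x_i$ as a regular point and with $Y\times_X D$ irreducible. Letting $\iota:\tilde D \to X$ be the normalization composed with inclusion and $\tilde Y \to \tilde D$ the corresponding Galois cover with group $G$, the class $\alpha := \sum n_i [x_i] \in C(\tilde D)$ has vanishing reciprocity image in $G^{ab}$, so Corollary~\ref{IsoOne} places $\alpha$ in $(\phi_{\tilde D})_* C(\tilde Y)$, and $\iota_*$ transports this to $\sum n_i [x_i] \in \phi_*C(Y)$.

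It remains to upgrade this to full injectivity of $\bar\rho$. By Proposition~\ref{Chebotarev} combined with the reduction lemma, the composite $\bigoplus_{x\in|X|}\Z \to C(X)/\phi_*C(Y) \xrightarrow{\bar\rho} G^{ab}$ identifies the image of $\bigoplus_x \Z$ with an abstract copy of $G^{ab}$, and this image is dense by Lemma~\ref{Predense}. The main obstacle I anticipate is concluding that this dense finite subgroup exhausts $C(X)/\phi_*C(Y)$, which amounts to showing that $\phi_*C(Y)$ is open (equivalently closed) in $C(X)$, forcing the quotient to be Hausdorff so that a dense finite subgroup must be all of it. I would attempt openness by descending from openness of norms for finite extensions of local fields, combined with the direct-sum topology on $I(X)$ --- the analogous one-dimensional fact is invoked tacitly in the proof of Corollary~\ref{IsoOne} --- and this is where I expect the essential technical work to lie.
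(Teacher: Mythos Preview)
Your proposal is correct and follows essentially the same route as the paper: reduce injectivity to the one-dimensional isomorphism theorem via Bloch approximation on a curve through the given points, then use openness of $\phi_*C(Y)$ (coming from openness of local norm maps) together with density of zero-cycles (Lemma~\ref{Predense}) to pass from the image of $\bigoplus_x \mathbb{Z}$ to all of $C(X)/\phi_*C(Y)$. One small technical point you gloss over: Proposition~\ref{Approx} requires the ambient scheme to be smooth and quasi-projective over some $\mathcal{O}$, so as the paper does you should first restrict to an affine dense open $U\subset X$ smooth over $\mathbb{Z}$, take your points $x_i$ in $|U|$, and then invoke the second part of Lemma~\ref{Predense} (for regular $X$ and open $U$) to get density of $\bigoplus_{x\in |U|}\mathbb{Z}$ in $C(X)$.
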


\begin{proof} Clearly $\Gal(Y/X)^{ab} = \p (X) / \phi_* \p (Y)$ and the surjectivity of $\rho_{Y/X}$ follows from
Proposition \ref{Dense}. Let $U$ be an affine dense open subscheme of $X$ which is smooth over $\mathbb{Z}$.
 Let $W\subset |U|$ be a finite subset and let $$\im_W := \im[\oplus_W \mathbb{Z}
\to  C(X)/\phi_* C(Y)]\; .$$ 
The Bloch approximation method, Proposition \ref{Approx}, produces a curve $D$ 
which contains all points in $W$ as regular points and such that 
$D_{Y}=D\times_X Y $ is irreducible. So we get an isomorphism
\[
\Gal(\mathbf{k}(D_{Y}) /\mathbf{k}(D) ) \tilde{\lr} \Gal(Y/X)
\]
In other words the map $\beta$ in the following commutative diagram is an isomorphism.
\[
\xymatrix{
C(\tilde{D})/ \phi_* C(\tilde{D}_Y)  \ar[r] \ar[d]_\alpha  & \p (\tilde{D}) / \phi_* \p (\tilde{D}_Y)   \ar@{=}[r] \ar[d]^\beta & \Gal(\mathbf{k}(D_Y)/\mathbf{k}(D))^{ab}  \ar[d]^{\wr} \\
 C(X)/ \phi_* C(Y) \ar[r]^{\rho_{Y/X}}  &  \p (X) /\phi_* \p (Y)  \ar@{=}[r] & \Gal (Y/X)^{ab} }
\]
Here $\tilde{D}$ is the normalization of $D$. 
According to Corollary \ref{IsoOne} the upper horizontal arrow is an isomorphism, so that we deduce 
$\im(\alpha) \cap \ker (\rho_{Y/X}) = 0 $. 
Furthermore it is clear by the choice of $D$
that $\im_W \subset  \im(\alpha)$ and consequently $\im_W \cap \ker(\rho_{Y/X}) = 0 $. As this holds for
all finite subsets $W\subset U$ and we know according to Lemma~\ref{Predense} that 
$$\bigcup_W \im_W = C(X)/\phi_* C(Y)$$ we deduce $\ker(\rho_{Y/X}) = 0$. Here we use the fact that $\phi_* C(Y)$
is an open subgroup of $C(X)$, which is true since for a finite extension of local field the norm map on the multiplicative groups of the fields is open.
 This finishes the proof of the theorem.\end{proof}


\section{Weak Existence Theorem}

\subsection{Extension}\mbox{}\\
Consider the commutative diagram
\[
\xymatrix{
C(X') \ar[r]^\rho \ar[d]_{i_*}  &   \p (X') \ar[d]^{i_*} \\
C(X) \ar[r]_\rho  &  \p (X)
}
\]
where $X,X'$ are regular nonempty arithmetic schemes and $i:X'\to X$ is an open immersion.

\begin{prop}[Extension]\label{Extension}
Let $V'\le \p (X')$ and $U\le C(X)$ be open subgroups such that $\rho^{-1} (V') = i_*^{-1} (U)$. Then there exists a unique open subgroup
$V\le \p (X)$ with $\rho^{-1} (V) = U$ and $i_*^{-1}(V) =V'$.
\end{prop}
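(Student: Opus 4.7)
My plan is to construct $V$ as the subgroup of $\p(X)$ corresponding to a finite \'etale covering of $X$ extending the one associated to $V'$, and then verify the two compatibilities. \emph{Uniqueness} is immediate from density: if $V_1,V_2\le \p(X)$ are both open with $\rho^{-1}(V_i)=U$, then for any $g\in V_1$ the open set $gV_2\cap V_1$ meets $\rho(C(X))$ (by Proposition~\ref{Dense}) at some $\rho(c)$; then $c\in\rho^{-1}(V_1)=U=\rho^{-1}(V_2)$ forces $\rho(c)\in V_2$, so $g\in V_2$, and by symmetry $V_1=V_2$.

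\emph{Existence.} Let $\phi':Y'\to X'$ be the finite \'etale abelian Galois covering corresponding to $V'$, with $G=\Gal(Y'/X')=\p(X')/V'$. By Theorem~\ref{Iso}, $\phi'_* C(Y')=\rho^{-1}(V')$, so the hypothesis reads $\phi'_* C(Y')=i_*^{-1}(U)$. I would show $\phi'$ extends to a finite \'etale covering $\phi:Y\to X$ via Zariski--Nagata purity of the branch locus, which reduces the task to verifying that $\phi'$ is unramified at every codimension-one point $\eta\in X\setminus X'$. For each such $\eta$, my strategy is to pick a closed point $x$ in the regular locus of $\overline{\{\eta\}}$ and use a Bloch-approximation-type construction (Proposition~\ref{Approx}) to produce a regular curve $C\subset X$ through $x$ transverse to $\overline{\{\eta\}}$ at $x$. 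Transversality identifies the inertia of $\phi'$ at $\eta$ with the inertia of the pulled-back covering of $\tilde C\cap X'$ at the preimage $\tilde x$ in the normalization $\tilde C$. One-dimensional class field theory on $\tilde C$ (Corollary~\ref{IsoOne}) expresses this inertia as the image in $G$ of the local units $\mathcal{O}_{\tilde C,\tilde x}^\times\subset \mathbf{k}(C)_{\tilde x}^\times\hookrightarrow I(X')$. But these local units have trivial valuation at the new closed point $x\in |X|$ and therefore lie in $\ker(i_*:C(X')\to C(X))\subseteq i_*^{-1}(U)=\phi'_* C(Y')$, so they map to zero in $G$, and $\phi'$ is unramified at $\eta$.

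\emph{Verification of the compatibilities.} Given the extension $\phi:Y\to X$, set $V:=\ker(\p(X)\to \Gal(Y/X))$. The identification $\Gal(Y/X)=G$, which holds because $Y\times_X X'=Y'$ is connected with Galois group $G$, makes $i_*^{-1}(V)=V'$ automatic. Theorem~\ref{Iso} applied to $\phi$ gives $\rho^{-1}(V)=\phi_* C(Y)$; for every closed point $x\in|X'|$ the generator $1_x\in C(X)$ lies in $\phi_* C(Y)$ iff $Frob_{x,X}$ maps trivially to $G$ iff $Frob_{x,X'}\in V'$ iff $1_x\in U$, the last equivalence being the hypothesis translated through Theorem~\ref{Iso} on $X'$. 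Since $\bigoplus_{x\in|X'|}\mathbb{Z}$ has dense image in $C(X)$ (Lemma~\ref{Predense}) and both $\phi_* C(Y)$ and $U$ are open and hence closed, a standard density argument concludes $\phi_* C(Y)=U$. The main obstacle I anticipate is the purity step, specifically producing the transverse regular curve globally through an arbitrary closed point of $\overline{\{\eta\}}$ via Bloch approximation, and verifying that transversality really matches the inertia at the higher-dimensional generic point $\eta$ with the one-dimensional inertia of the pulled-back covering at $\tilde x$; everything else reduces to formal density and functoriality arguments.
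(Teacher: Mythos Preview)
Your approach to existence is genuinely different from the paper's: you reduce to codimension one via Zariski--Nagata purity and then try to detect the inertia $I_\eta$ at each codimension-one point $\eta$ of $X\setminus X'$ by restricting to a single transverse curve. The paper instead works directly at each closed point $x\in |X|\setminus |X'|$, shows that the pullback of $Y$ to the punctured henselian local scheme $X'_x$ splits completely over all of its closed points (these closed points being exactly the curve-germs through $x$ meeting $X'$), and then invokes the Local Splitting result, Proposition~\ref{LocalSplit}, to conclude that the covering is trivial there and hence that $Y\to X$ is \'etale at $x$.

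There is a real gap, and it is precisely where you anticipated. Your local-units computation is correct and in fact works for \emph{any} curve $C$ through a closed point $x\in\overline{\{\eta\}}$ with $x$ regular on $C$: the units $\mathcal{O}_{\tilde C,\tilde x}^\times$ map to zero under $i_*$ and hence lie in $i_*^{-1}(U)$, so the pulled-back covering of $\tilde C\cap X'$ is always unramified at $\tilde x$. But the implication ``unramified on one transverse curve $\Rightarrow I_\eta=0$'' fails for wild ramification. As a model in equal characteristic, over $\mathbb{F}_p[[s,\pi]]$ the Artin--Schreier covering $u^p-u=s/\pi$ of the complement of $V(\pi)$ is totally wildly ramified along $V(\pi)$, yet its restriction to the transverse curve $V(s)$ is the split covering $u^p-u=0$; the same phenomenon is a genuine concern at vertical divisors of an arithmetic scheme. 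What your argument has actually established is unramifiedness of the curve-pullback for \emph{every} curve through $x$, and the paper's Proposition~\ref{LocalSplit} (after the harmless residue-field extension that turns ``unramified'' into ``completely split'') is exactly the tool that converts this into \'etaleness of $Y\to X$ at $x$. So the natural completion of your strategy is the paper's argument; a single-curve inertia comparison is not enough.
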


\begin{proof} Uniqueness is clear because $\p(X')\to \p(X)$ is surjective. For existence let 
$V'\le \p (X')$ corresponds to the abelian Galois covering $\phi':Y'\to X'$. Now let $\phi:Y\to X$ be the normalization
of $X$ in $\mathbf{k}(Y')$. We have to show $\phi $ is \'etale, because then $\phi$ corresponds to some open subgroup $V\le \p(X)$
with $i_*^{-1}(V)=V'$. It follows easily from the fact that the image of $C(X')\to C(X)$ is dense, Lemma~\ref{Predense}, 
that we also have $\rho^{-1}(V)=U$. So it suffices to show
the following claim.
 Fix a closed point $x\in |X|-|X'|$ and let $X_x=Spec(A_x)$ be an \'etale extension of the henselian local ring at $x$ such that pullback
of $U$ to the residue field of $A_x$ is trivial. Let $X'_x$ be $X'\times_X X_x$. \\[2mm]
{\bf Claim.}
The \'etale covering $Y'\times_X X'_x \to X'_x$ splits completely over all closed points.
\begin{proof}
$Y\times_X X_x$ is normal. It is \'etale over $X'_x$ and each of
its components is an abelian Galois covering over $X'_x$. The closed points
of $X'_x$ correspond to the branches of curves on $X$ through $x$ which meet $X'$.\\
For such a curve $D$ on $X$ with normalization $\tilde D$ denote by $j:\tilde D \to X$ the natural map and by $j':\tilde D' \to X'$ its
restriction to $X'$.
The pullback $j'^{-1} ( Y)$ is an \'etale covering of $D'$ which corresponds by one-dimensional
class field theory, Proposition \ref{OneClass}, to the open subgroup $(i \circ j')_*^{-1}(U)\le C(\tilde{D}')$.
But then one-dimensional class field theory
 implies that $j'^{-1} ( Y)$ extends to an \'etale covering of $\tilde{D}$. \\
Now we claim that this implies that $Y \times X_x$ splits completely over all closed points of $X'_x$. 

Let $y \in X'_x$ be such a closed point corresponding to the prime ideal $p_y \subset A_x$. By what has been said above $Y \times \mathbf{k} (y)$ extends
to an \'etale covering of the normalization of $A_x/p_y$. 
But over the closed point of the normalization of $A_x/p_y$ this \'etale covering splits completely, since the 
pullback of $U$ to the residue field of $A_x$ is trivial. As the normalization of $A_x/p_y$ is a henselian discrete 
valuation ring, the whole \'etale covering is the sum of trivial coverings.
\end{proof}
The claim together with the local splitting result, Proposition \ref{LocalSplit}, implies that $Y\times_X X_x\to X_x$ is
a sum of trivial coverings and thus that $Y$ is \'etale over $X$.\end{proof}

\subsection{Effacability}\mbox{}\\
The next proposition is one of the key observations of Wiesend's approach to the class field theory of arithmetic schemes. It says that \'etale locally around the generic point of an arithmetic scheme
$X$ an open subgroup $U\le C(X)$ becomes trivial.

\begin{prop}\label{Eff}
Let $U\le C(X)$ be an open subgroup such that $C(X) /U$ has finite exponent. 
Then there exists an arithmetic scheme $X'$ and an \'etale morphism $f:X'\to X$ with dense image such that $f_*^{-1}(U)= C(X')$.
\end{prop}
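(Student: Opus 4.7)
The plan proceeds by induction on $d = \dim X$. In the base case $d = 1$, $C(X)$ is the classical idele class group, so the open subgroup $U$ with finite-exponent quotient has finite index; by Corollary~\ref{IsoOne} it coincides with the norm subgroup $\phi_*C(Y)$ of a finite abelian \'etale Galois cover $\phi \colon Y \to X$, and then $f := \phi$ satisfies $f_*^{-1}(U) = C(Y)$ tautologically.

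For $d \ge 2$, apply Proposition~\ref{ElFib} to obtain an \'etale morphism $X_1 \to X$ where $X_1$ admits an elementary fibration $\pi \colon X_1 \to W$ with section $s \colon W \to X_1$ and $\dim W = d - 1$. It suffices to prove the statement for $X_1$ with the pullback of $U$, so we assume $X = X_1$ carries such a fibration. Set $U_W := s_*^{-1}(U) \subseteq C(W)$, which is open with finite-exponent quotient; by induction there is an \'etale $g \colon W' \to W$ of dense image with $g_*^{-1}(U_W) = C(W')$. Base-changing, form $X' := X \times_W W'$ with the induced \'etale $h \colon X' \to X$ of dense image, section $s' \colon W' \to X'$, and projection $\pi'$. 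Setting $U' := h_*^{-1}(U)$ we then have $(s'_*)^{-1}(U') = C(W')$, so the section's contribution is trivial modulo $U'$.

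It remains to show, after a further \'etale base change, that $U' = C(X')$. By Lemma~\ref{Predense}, it suffices to verify $[y] \in U'$ for every closed $y \in |X'|$. Such a $y$ lies in the fibre $X'_{w'}$ over $w' = \pi'(y)$, which is a smooth quasi-projective curve over the finite field $k(w')$. The section point $s'(w')$ of this curve already lies in $U'$, and by one-dimensional class field theory on the fibre (Proposition~\ref{OneClass}) the combination $[y] - [k(y):k(w')] \cdot [s'(w')]$ has degree zero on $X'_{w'}$; so modulo $U'$, the closed point $y$ differs from the section only by a degree-zero fibrewise class.

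The main obstacle is to kill these degree-zero classes uniformly over the base: a priori, the pullback of $U'$ to a given fibre need not contain the whole degree-zero subgroup of that fibre's class group. This is where Katz--Lang finiteness (Proposition~\ref{Katz-Lang}) enters decisively, bounding $\ker[\p(X') \to \p(W')]$ by a finite group; the fibrewise degree-zero variation is therefore controlled by finitely many auxiliary abelian covers, which can be absorbed into a single further \'etale base change along an appropriate $W'' \to W'$. This trivializes $U'$ on the pulled-back total space and completes the induction.
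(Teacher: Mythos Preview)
Your inductive setup is the same as the paper's, and the reduction to the case where the section pullback $s_*^{-1}(U)$ is already trivial is correct. The gap is entirely in the final paragraph.

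A further \'etale base change along $W'' \to W'$ cannot kill the fibrewise degree-zero classes. Such a base change leaves the geometric fibres of $\pi'$ unchanged, so any abelian cover of $X'$ that is nontrivial on a geometric fibre remains nontrivial on the corresponding geometric fibre of $X' \times_{W'} W''$. In other words, horizontal base change from $W'$ cannot absorb a cover with nontrivial vertical component. Your appeal to Katz--Lang is also misplaced: Proposition~\ref{Katz-Lang} bounds $\ker[\p(X') \to \p(W')]$, but at this stage of the argument the Weak Existence Theorem is not yet available, so there is no way to translate information about $\p(X')$ back into a statement about the open subgroup $U' \le C(X')$. (In the paper Katz--Lang enters only later, in the proof of the Finiteness Theorem.)

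What the paper does instead is construct an \'etale cover $\phi:Y\to X$ that is genuinely vertical. After an \'etale base change on $W$ making $R^1\pi_*(\mathbb{Z}/m)$ constant (Proposition~\ref{EFiniteness}), one forms the maximal abelian exponent-$m$ cover $Y_{\mathbf{k}(W)}\to X_{\mathbf{k}(W)}$ of the generic fibre that is completely split over the section, and lets $Y$ be the normalization of $X$ in its function field. The base change isomorphisms of Proposition~\ref{EFiniteness} show that $Y\to X$ is \'etale and that over every closed point $w\in |W|$ the fibre $Y_w\to X_w$ is again the maximal abelian exponent-$m$ cover split over $s(w)$. One-dimensional class field theory on each fibre then gives $(i_w\circ\phi_w)_*^{-1}(U)=C(Y_w)$, and density of the fibrewise points (Lemma~\ref{Predense}) yields $\phi_*^{-1}(U)=C(Y)$. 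Thus the scheme $X'$ in the statement is this $Y$, an \'etale cover of $X$ acting nontrivially on the fibres---exactly the kind of map your argument never produces.
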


\begin{proof} Let $m$ be the exponent of $C(X)/U$ and assume without restriction that $m\in H^0(X,\mathcal{O}_X^\times)$. 
We use induction on $d=\dim(X)$. If $d=1$ the proposition can be deduced from one-dimensional class field theory, Proposition
\ref{OneClass}. So assume $d>1$. Using Proposition \ref{ElFib} we can without restriction assume that
there exists an open immersion $i:X\to \bar{X}$ and a relative curve $\bar{\pi}:\bar{X} \to W$ over a regular
arithmetic scheme $W$ such that $\bar{X} -X$ is a direct sum of copies of $W$. Furthermore we can assume that there
exists a section $s:W\to X$. Set $\pi = \bar{\pi} \circ i$.

By induction $s_*^{-1}(U)$ becomes trivial in some \'etale neighborhood of the generic point of $W$. So after
an \'etale base change we can assume that $s_*^{-1}(U)=C(W)$. Furthermore replacing $W$ by an \'etale neighborhood of its
generic point 
we can assume that $R^1 \pi_* ( \mathbb{Z} /m)$ is constant on $W$ using Proposition \ref{EFiniteness}.
Let $Y_{\mathbf{k}(W)}$ be the maximal abelian Galois covering of $X_{\mathbf{k}(W)}=X\times_W \mathbf{k}(W)$
of exponent $m$ which splits completely over $s_{\mathbf{k}(W)}:Spec(\mathbf{k}(W)) \to X_{\mathbf{k}(W)}$. As
a first step we want to determine the Galois group of $Y_{\mathbf{k}(W)}  \to X_{\mathbf{k}(W)}$. We have an
isomorphism
\[
\p (X_{\mathbf{k}(W)}) \cong \p (X_{\overline{\mathbf{k}(W)}})_{\Gal(\mathbf{k}(W))} \oplus \Gal(\mathbf{k}(W))^{ab}
\] 
induced by $s$  and an isomorphism
\[
Hom_{cont}(\p (X_{\overline{\mathbf{k}(W)}} ), \mathbb{Z}/m ) = H^1 (X_{\overline{\mathbf{k}(W)}} ,\mathbb{Z}/m) \; ,
\]
which makes $\p (X_{\overline{\mathbf{k}(W)}} ) \otimes \mathbb{Z}/m$ and $ H^1 (X_{\overline{\mathbf{k}(W)}} ,\mathbb{Z}/m)$ dual finite abelian groups.
Base change, Proposition \ref{EFiniteness}, and the assumption that $R^1 \pi_* ( \mathbb{Z} /m)$ is a constant sheaf 
imply that $\Gal(\mathbf{k}(W))$ acts trivially on
$H^1 (X_{\overline{\mathbf{k}(W)}} ,\mathbb{Z}/m)$ and therefore on $\p (X_{\overline{\mathbf{k}(W)}})\otimes \mathbb{Z}/m$. 
So we have 
\[
\p (X_{\mathbf{k}(W)}) \otimes \mathbb{Z}/m \cong \p (X_{\overline{\mathbf{k}(W)}}) \otimes \mathbb{Z}/m \oplus \Gal(\mathbf{k}(W))^{ab} \otimes \mathbb{Z}/m
\]
and $$\Gal(  Y_{\mathbf{k}(W)}/X_{\mathbf{k}(W)}) = 
 \p ( X_{\overline{\mathbf{k}(W)} }) \otimes \mathbb{Z}/m \; .$$
Let $\phi:Y\to X$ be the normalization of $X$ in $\mathbf{k}(Y_{\mathbf{k}(W)})$. \\[2mm]
{\bf Claim.} {$\phi:Y\to X$ is an \'etale morphism and induces over each fibre of $\pi:X\to W$ the maximal abelian 
Galois covering of exponent $m$ completely split over the image of $s$.}

\begin{proof} Let $w$ be a point of $W$ and let $A^h$ resp.~$A^{sh}$ the henselization resp.~strict henselization of the 
local ring at $w$, $F^h=Q(A^h)$. 
Let $Y_{A^h}$ be the maximal abelian Galois covering of $X_{A^h}=X\times A^h$ of exponent $m$ completely split over the image of $s_{A^h}$.
We will show that $Y_{A^h}$ is indeed isomorphic to the base change $Y\times A^h$, because this implies that $Y\to X$
is \'etale over $w$.
Reasoning as above we get 
\[
\Gal(Y_{A^h}/X_{A^h}) = \p (X_{A^{sh}} )  \otimes \mathbb{Z}/m 
\] 
So the equalities
\[
\p (X_{\overline{\mathbf{k}(W) }}) \otimes \mathbb{Z}/m = \p (X_{A^{sh}} ) \otimes \mathbb{Z}/m = \p (X_{\overline{\mathbf{k}(w)} }) \otimes \mathbb{Z}/m \; ,
\]
which follow from Proposition \ref{EFiniteness},
show that $Y_{\mathbf{k}(W)} \times_{\mathbf{k}(W)} F^h = Y_{A^h} \times_{A^h} F^h $ and that 
$Y_{A^h}\times_{A^h} \mathbf{k}(w)$ is the maximal abelian Galois covering of exponent $m$ of $X_w$ completely split 
over the image of $s_w$. 
The former implies that $Y_{A^h }=Y\times A^h$ as we wanted to show and the latter shows that the fiber of $Y\to X$ over $w$ is
what it should be. 
\end{proof}

Finally, for a closed point $w\in |W|$ consider the commutative diagram
\[
\xymatrix{
Y_w \ar[r] \ar[d]_{\phi_w}  &  Y \ar[d]^{\phi} \\
X_w \ar[r]_{i_w}  &  X 
}
\]
One-dimensional class field theory, Proposition \ref{OneClass},  shows that $(i_w \circ \phi_w  )_*^{-1} (U) =C(Y_w)$
because $\phi_w:Y_w\to X_w$ is the maximal abelian Galois extension of exponent $m$ completely split over the image
of $s$. The image of
\[
\bigoplus_{w\in |W|} C(Y_w) \lr  C(Y)  
\]
is dense, Lemma~\ref{Predense}, and $\phi_*^{-1} (U)\le C(Y)$ is an open subgroup and contains this image. 
So it follows that $\phi_*^{-1}(U)=C(Y)$. We set $X'=Y$. \end{proof}

\subsection{Existence}\mbox{}\\
Let $X$ be a regular arithmetic scheme.

\begin{theo}[Weak Existence]\label{WeakEx}
Let $U\le C(X)$ be an open subgroup such that $C(X) /U$ has finite exponent. Then there exists a unique open subgroup $V\le \p (X)$ with
$\rho^{-1} (V)=U$. In particular $C(X)/U$ is finite.
\end{theo}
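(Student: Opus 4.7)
Suppose $V_1, V_2 \le \p(X)$ are open subgroups with $\rho^{-1}(V_1) = \rho^{-1}(V_2) = U$. If $V_1 \ne V_2$, then $V_1 \setminus V_2$ (or $V_2 \setminus V_1$) is a nonempty open subset of $\p(X)$, since any open subgroup is also closed. By Proposition~\ref{Dense} the image $\rho(C(X))$ is dense in $\p(X)$, so there is $c \in C(X)$ with $\rho(c) \in V_1 \setminus V_2$, contradicting $\rho^{-1}(V_1) = \rho^{-1}(V_2)$. Hence $V_1 = V_2$.

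\textbf{Existence.} The plan is to combine the three main tools of Section~7 and Section~\ref{SecIso} in the following order: Effacability to kill $U$ after an étale pullback, Lemma~\ref{fcov} to restrict to a dense open subscheme over which this pullback is finite étale, the Isomorphism Theorem to produce the desired open subgroup of $\p(X_0)$ for the restriction, and Extension to lift it back to $X$. Concretely, by Proposition~\ref{Eff} applied to $U$, there is an étale morphism $f:X' \to X$ of arithmetic schemes with dense image such that $f_*(C(X')) \subseteq U$; we may assume $X'$ is connected. By Lemma~\ref{fcov} there is a dense open $X_0 \subseteq X$ such that $f^{-1}(X_0) \to X_0$ is finite étale, and, shrinking $X_0$ if necessary, surjective. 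Pick a connected component $X_0' \subseteq f^{-1}(X_0)$ and write $f' : X_0' \to X_0$, with open immersions $j : X_0 \hookrightarrow X$ and $j' : X_0' \hookrightarrow X'$. Set $U_0 := j_*^{-1}(U)$. The commutativity $j_* \circ f'_* = f_* \circ j'_*$ and the inclusion $f_*(C(X')) \subseteq U$ give $f'_*(C(X_0')) \subseteq U_0$.

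Now let $\phi_0 : Y_0 \to X_0$ be the Galois closure of $f'$; it factors through $f'$, hence
\[
(\phi_0)_*(C(Y_0)) \subseteq f'_*(C(X_0')) \subseteq U_0.
\]
Since $X_0$ is regular (as an open subscheme of $X$) and $Y_0$ is étale over it, the pair $Y_0 \to X_0$ is a Galois covering of regular arithmetic schemes, so Theorem~\ref{Iso} gives an isomorphism
\[
\rho_{Y_0/X_0} : C(X_0)/(\phi_0)_*(C(Y_0)) \;\tilde{\lr}\; \Gal(Y_0/X_0)^{ab}.
\]
Let $H \le \Gal(Y_0/X_0)^{ab}$ be the image of $U_0$, and let $V_0 \le \p(X_0)$ be the preimage of $H$ under the surjection $\p(X_0) \twoheadrightarrow \Gal(Y_0/X_0)^{ab}$. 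Then $V_0$ is open, and, since $(\phi_0)_*(C(Y_0)) \subseteq U_0$, unwinding definitions yields $\rho^{-1}(V_0) = U_0 = j_*^{-1}(U)$. Proposition~\ref{Extension} now produces the required open subgroup $V \le \p(X)$ with $\rho^{-1}(V) = U$. Finally, $V$ being open implies $\p(X)/V$ is finite, and $C(X)/U$ injects into $\p(X)/V$, hence is also finite.

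\textbf{Main obstacle.} The conceptual difficulty is bridging Proposition~\ref{Eff}, which only provides an étale morphism $f$ that need not be finite, and Theorem~\ref{Iso}, which requires an honest Galois covering. The intermediate maneuver—shrinking to a dense open $X_0$ via Lemma~\ref{fcov} where $f$ becomes finite étale, passing to a Galois closure $Y_0 \to X_0$, and then using Proposition~\ref{Extension} to transport the resulting open subgroup of $\p(X_0)$ back to $\p(X)$—is exactly what Wiesend's framework was designed to enable; the only technical point to verify is that the various pushforwards on class groups commute compatibly, which follows from functoriality of $C(\cdot)$.
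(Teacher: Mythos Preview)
Your proof is correct and follows essentially the same route as the paper's: effacability (Proposition~\ref{Eff}) to trivialize $U$ after an \'etale pullback, Lemma~\ref{fcov} together with passage to a Galois closure to obtain a genuine Galois covering over a dense open, the Isomorphism Theorem to identify the corresponding open subgroup of the fundamental group, and finally Proposition~\ref{Extension} to lift back to $X$. The only cosmetic difference is that the paper packages the shrinking-plus-Galois-closure step into the single phrase ``after replacing $X''$ by an \'etale neighborhood of its generic point we can factor $f$ into a Galois covering $\phi:X''\to X'$ and an open immersion $i:X'\to X$''; your version spells this out more explicitly.
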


In fact the deep finiteness theorem of the next section will show that the finite exponent assumption is superfluous. This is why we call
the theorem weak existence theorem. In the proof of the finiteness theorem one uses the weak existence theorem in an essential way, so that it deserves
its own name.
\begin{proof} The uniqueness holds because $\rho$ has dense image according to Proposition~\ref{Dense}. 
In fact this implies that for an open subgroup $V\le \p(X) $ the subgroup $\rho(\rho^{-1}(V)) $ is dense in $V$, therefore its closure is $V$. 
So if $V_1, V_2 \le \p (X)$ both have preimage $U$ we have the equalities  
\[
V_1 = \overline{\rho (U)} = V_2 \; .
\]
For the existence part use Theorem \ref{Eff} to find a nontrivial \'etale morphism $f:X''\to X$ with $f_*^{-1} (U) = C(X'')$.
From Lemma \ref{fcov} we know that after replacing $X''$ by an \'etale neighborhood of its generic point we can factor $f$ into a
Galois covering $\phi: X'' \to X'$ and an open immersion $i:X' \to X$. It follows from our assumption
that $\phi_* C(X'') \subset i_*^{-1}(U) = U'$. The isomorphism theorem, Theorem \ref{Iso}, gives an isomorphism
\[
\rho:C(X')/ \phi_* C(X'')  \tilde{\lr} \Gal(X''/X')^{ab}= \p (X') / \phi_* \p (X'') \; .
\]
so that $V'= \rho (U') + \phi_* \p (X'')\le \p (X')$ satisfies $\rho^{-1}(V') = U'$.
Finally, Proposition \ref{Extension} applied to $i:X' \to X$  produces an open subgroups $V\le \p (X)$ with
$\rho^{-1}(V) = U$. \end{proof}

\section{Finiteness}

\noindent Wiesend's finiteness theorem is one of the strongest and most beautiful results in higher global class field theory. In some sense it replaces 
Bloch's exact sequence in the more classical approaches to higher global class field theory originating from~\cite{Bloch}. The proof we give is a corrected version
of Wiesend's proof in \cite{W4} close to~\cite[Section 5]{KeSch}. Let $X$ be a
regular arithmetic scheme.

\begin{theo}[Finiteness]\label{Finiteness}
Let $U\le C(X)$ be an open subgroup. Then $U$ has finite index in $C(X)$.
\end{theo}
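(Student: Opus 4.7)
The strategy is to reduce to the Weak Existence Theorem (Theorem~\ref{WeakEx}): since that result yields $[C(X):U]<\infty$ as soon as $C(X)/U$ has finite exponent, it is enough to prove the intermediate claim that $C(X)/U$ is of bounded exponent for every open $U\le C(X)$. I would establish this by induction on $d=\dim X$, with Proposition~\ref{ElFib} serving as the geometric backbone.

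For $d=1$ the claim is formal. Any open subgroup $U\le C(X)$ contains the connected component $C(X)^0$, since $C(X)^0\cap U$ is a nonempty open subgroup of the connected group $C(X)^0$ and hence equals $C(X)^0$. By Proposition~\ref{OneClass} the quotient $C(X)/C(X)^0$ is isomorphic to the profinite group $\p(X)$ (arithmetic schemes being flat over $\Z$, we are in the number field case). Thus $C(X)/U$ is a discrete quotient of a profinite group by an open subgroup, hence finite and a fortiori of bounded exponent.

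For the inductive step, I would first record that bounded-exponent is preserved under \'etale morphisms $f:X'\to X$ with dense image: the image $f_*(C(X'))$ is dense in $C(X)$ by Lemma~\ref{Predense}, and since $U$ is open and hence closed, the set $\{c\in C(X):nc\in U\}$ is closed; if it contains the dense subgroup $f_*(C(X'))$ it must equal $C(X)$. Invoking Proposition~\ref{ElFib} I then reduce to the case where $X$ sits in an elementary fibration $\bar\pi:\bar X\to W$ over a regular arithmetic scheme $W$ of dimension $d-1$, equipped with a section $s:W\to X$ and with $\bar X-X$ a disjoint union of copies of $W$. The inductive hypothesis applied to the open subgroup $s_*^{-1}(U)\le C(W)$ furnishes an integer $n_1$ with $n_1\, s_*(C(W))\subset U$.

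The main obstacle is to bound the fiber contribution uniformly in $w\in|W|$. One-dimensional class field theory applied to each fiber $X_w$ yields some $n(w)$ with $n(w)\cdot i_{w,*}(C(X_w))\subset U$, but a uniform upper bound $n_2$ independent of $w$ is needed. I would extract this by adapting the device used in the Effacability argument (Proposition~\ref{Eff}): after a further \'etale base change on $W$ (and a preliminary shrinking to invert $n_1$, which is permitted by the \'etale-base-change reduction above), Proposition~\ref{EFiniteness} makes $R^1\pi_*(\Z/n_1)$ locally constant, so that the maximal abelian \'etale covering of exponent $n_1$ of the fibers becomes uniform; combined with Katz--Lang finiteness (Proposition~\ref{Katz-Lang}) bounding $\ker[\p(X)\to\p(W)]$, and with Lemma~\ref{Predense} to approximate arbitrary classes in $C(X)$ by closed-point classes, this produces a uniform $n_2$ with $n_2\cdot\ker[\pi_*:C(X)\to C(W)]\subset U+s_*(C(W))$. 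Multiplying, $n_1n_2\cdot C(X)\subset U$, which closes the induction and, via Weak Existence, proves the theorem.
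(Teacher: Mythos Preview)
Your overall architecture---reduce to bounded exponent and then induct on $\dim X$ via an elementary fibration---is exactly the paper's. But two steps break down.

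First, the reduction to an \'etale neighbourhood $f:X'\to X$ is justified incorrectly. You claim $f_*(C(X'))$ is dense in $C(X)$ by Lemma~\ref{Predense}, but that lemma concerns open subschemes, not \'etale morphisms. In fact density fails already for a connected degree-$2$ covering: by the Isomorphism Theorem $C(X)/f_*(C(X'))\cong\Z/2$, so $f_*(C(X'))$ is a proper closed subgroup. The paper instead uses the short exact sequence
\[
0\lr C(X')/f_*^{-1}(U)\lr C(X)/U\lr C(X)/[f_*(C(X'))+U]\lr 0
\]
and bounds the exponent of the right-hand term by $\deg(X'/X)$ via the norm on $0$-cycles.

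Second, and more seriously, the heart of the proof is the uniform bound on the fibre contributions, and your sketch does not supply one. Making $R^1\pi_*(\Z/n_1)$ locally constant controls covers of exponent $n_1$, but $n_1$ is the exponent coming from the base $W$ and has no a priori relation to the exponent of $C(X_w)/U_w$. Katz--Lang bounds $\ker[\p(X)\to\p(W)]$, but you cannot transfer this to a bound on $C(X)/U$ without already knowing that $U$ arises from $\p(X)$---which is precisely what remains to be shown. The paper's argument is substantially more delicate: one first reduces to the case where all the orders $N_x$ are $l$-primary for a fixed prime $l$, then constructs an $l$-\emph{Bloch point} $w_0\in W$ at which the $\Gal$-invariants of the geometric $H^1$ agree with the generic ones, and finally uses Chebotarev density together with Bloch's approximation lemma (Proposition~\ref{Approx}) to propagate the bound $N_{s(w_0)}\le H(w_0,l^\infty)$ to all closed points, up to the controlled factor $\deg(X'/X)$. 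None of this machinery appears in your outline; the gap is where the real content of the proof lies.
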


\begin{proof} By the weak existence theorem, Theorem \ref{WeakEx}, it is enough to show that $C(X)/U$ has finite
exponent. But in order to prove the latter it is sufficient to show that for some \'etale neighborhood $f:X'\to X$ of the generic
point of  $X$ the exponent of $C(X')/f_*^{-1}(U)$ is finite. In fact this follows from the short exact sequence
\[
0\lr C(X')/f_*^{-1}(U) \lr C(X) / U \lr C(X)/ [f_*(C(X')) + U] \lr 0
\]
because $\mathrm{exp} [C(X) / f_*(C(X')) + U]\le \deg(X'/X)$. We will use this reduction several times in the proof.
In particular we can assume that there
exists a fibration $\pi:X\subset \bar{X} \to W$ as in Proposition \ref{ElFib} with $W$ affine and smooth over $\mathbb{Z}$.
 Furthermore we can by induction and using the weak existence
theorem, Theorem \ref{WeakEx}, assume that
the pullback of $U$ along some section $s:W\to X$ of $\pi$ is trivial.
For the rest of the proof we let $N_x$ be the order of the image of $C(x)\to C(X)/U$ for $x\in |X|$.
$N_x$ is finite for all $x\in |X|$, since for a horizontal curve $i:D\to X$ with $x\in D$ regular
we have $N_x \le \# C(D)/i_*^{-1}(U)< \infty$. The latter finiteness holds by one-dimensional class field theory, Proposition
\ref{OneClass}.
 By Lemma \ref{Predense} it is enough
to show that all the $N_x$ are bounded. First we prove the theorem under the following assumption: \\[2mm]
{\bf Assumption.} There is a prime
$l$ such that for all $x\in |X|$ the natural number $N_x$ is a power of $l$.  \\[2mm]
Without restriction we can assume that $l\in H^0(W,\mathcal{O}_W^\times)$. Then by Proposition  \ref{EFiniteness}
we get that 
$R^1 \pi_* \mathbb{Q}_l /\mathbb{Z}_l$ is locally constant. It is easy to see that the $l^i$-torsion
subsheaf of $R^1 \pi_* \mathbb{Q}_l/\mathbb{Z}_l$ is $R^1 \pi_* \mathbb{Z}/l^{i}$ for all $i>0$. The next claim assures the existence
of an $l$-Bloch point.  \\[2mm]
{\bf Claim.} There exists an arithmetic scheme $W'$, an \'etale morphism $W'\to W$ with dense image and a closed point $w_0\in |W'|$ such that 
\[
H^1(X_{\overline{\mathbf{k}(W')}},\mathbb{Q}_l / \mathbb{Z}_l )^{\Gal(\mathbf{k}(W'))} \cong  H^1(X_{\overline{\mathbf{k}(w_0)}},\mathbb{Q}_l / \mathbb{Z}_l )^{\Gal(\mathbf{k}(w_0))}\; .
\]

\begin{proof} Remark that by our assumptions the left hand side is contained in the right hand side for arbitrary $w_0$ and $W'$. In order to 
see this observe that by Proposition \ref{EFiniteness} we have
\[
H^1(X_{\overline{\mathbf{k}(w_0)}},\mathbb{Q}_l / \mathbb{Z}_l ) = H^1(X_{A^{sh}},\mathbb{Q}_l / \mathbb{Z}_l )=
H^1(X_{\overline{\mathbf{k}(W')}}, \mathbb{Q}_l /\mathbb{Z}_l )\; .
\]
Here $A^h$ resp.~$A^{sh}$ is the henselization resp.~strict henselization of the local ring at some $w_0\in W'$. 
Furthermore we have 
$\Gal(\mathbf{k}(w_0)) =\pi_1 (Spec( A^h)) \subset \Gal (\mathbf{k}(W'))$, so putting this together we get
\begin{eqnarray*}
H^1(X_{\overline{\mathbf{k}(w_0)}},\mathbb{Q}_l / \mathbb{Z}_l )^{\Gal(\mathbf{k}(w_0))} & = & H^1(X_{A^{sh}},\mathbb{Q}_l / \mathbb{Z}_l )^{\pi_1 (Spec(A^h))}\\
& \supset & H^1(X_{\overline{\mathbf{k}(W')}},\mathbb{Q}_l / \mathbb{Z}_l )^{\Gal(\mathbf{k}(W'))} \; .
\end{eqnarray*}

Observing that these groups  are finite by Theorem \ref{Katz-Lang} we fix $w\in |W|$ and let $W'$ be a sufficiently fine 
\'etale neighborhood of $w$ such that there exists $w_0\in |W'|$ with $\mathbf{k}(w_0) =\mathbf{k}(w)$ and such that the last inclusion becomes an equality. 
The details can be found in \cite[Proposition 5.7]{KS}.\end{proof}
For the rest of the proof of the theorem we assume without restriction that there exists an $l$-Bloch point $w_0\in W$, 
that is a point as in the last claim. The generic point of $W$ is denoted by $\eta$. We introduce the following notation
\[
H(w,l^i)= \# H^1(X_{\overline{\mathbf{k}(w)}},\mathbb{Z}/ l^i )^{\Gal(\mathbf{k}(w))}
\] for $w\in W$ and $i\in \mathbb{N} \cup \{\infty\}$; here $\mathbb{Z}/l^\infty = \mathbb{Q}_l / \mathbb{Z}_l$. It follows from Katz-Lang finiteness,
Theorem \ref{Katz-Lang}, that for all points $w\in W$ we have $H(w,l^\infty)< \infty$.  \\[2mm]
{\bf Claim.} There exists a Galois covering $W'\to W$  such that
$H(w,l^\infty)=H(w_0,l^\infty)$ for all points $w\in |W|$ with $\DG_{W'/W}(w_0)\subset \DG_{W'/W}(w)$.
\begin{proof} Choose $n>0$ such that  $H(w_0,l^\infty) = H(w_0, l^n)$ and let $W'$ be the Galois covering trivializing $R^1 \pi_* \mathbb{Z}/ l^{n+1}$. It is then sufficient to show
that $H(w_0,l^n)= H(w,l^n)=H(w,l^{n+1})$ for a point $w\in |W|$ with  $\DG_{W'/W}(w_0)\subset \DG_{W'/W}(w)$, since if for a finite $l$-primary abelian group
exponent $l^n$ resp. $l^{n+1}$ elements coincide the group has exponent $l^n$ itself. So fix a point $w$ with 
$\DG_{W'/W}(w_0)\subset \DG_{W'/W}(w)$ and observe that this implies $H(w,l^i)\le H(w_0 ,l^i)$ for $i\le n+1$. Generally we have for 
every point $w\in W$ and $i\in \mathbb{N}\cup \{\infty\} $ the inequality $H(\eta, l^i) \le H(w,l^i)$. On the other hand the assumption that
$w_0$ is an $l$-Bloch point means $H(w_0 , l^i) = H(\eta , l^i)$. If we put everything together we see $H(w_0,l^n) = H(w,l^n)=H(w,l^{n+1})$. 
\end{proof}

Set $X'=X \times_W W'$ and $x_0=s(w_0)$. It is easily seen that $X'\to X$ is a Galois covering with Galois group $\Gal(W'/W)$.\\[2mm]
{\bf Claim.}  The  numbers $N_x$ are bounded above if $x\in |X|$ varies over all points with $\DG_{X'/X}(x) = \DG_{X'/X}(x_0)$.
\begin{proof} Using one-dimensional class field theory, Proposition \ref{OneClass},  it follows that 
$N_x\le \# C(X_{\pi(x)})/U_{\pi(x)}\le H(\pi(x) , l^\infty)$, because the pullback of $U\le C(X)$ along $s:W\to X$ is trivial. Here $U_{\pi(x)}$
is the inverse image of $U$ in $C(X_{\pi(x)}) $.
Now if $\DG_{X'/X}(x) = \DG_{X'/X}(x_0)$ we have $$\DG_{W'/W}(w_0) = \DG_{X'/X}(x_0) = \DG_{X'/X}(x) \subset \DG_{W'/W}(\pi(x))\; ,$$ 
so that by the last claim $H(\pi(x),l^\infty) = H(w_0, l^\infty)$. This means the $N_x$ in question 
are bounded by $H(w_0,l^\infty )$. \end{proof}

Let $B$ be a bound for the numbers $N_x$ with $\DG_{X'/X}(x)=\DG_{X'/X}(x_0)$ and $d=\deg(X'/X)$.\\[2mm]
{\bf Claim.} $N_x\le B\, d$ for all $x\in |X|$.
\begin{proof} Let $x\in |X|$. We will show $N_x\le B\, d$. Let $C\hookrightarrow X$ be a curve which contains $x_0$ and $x$ as regular points and such that
$C'=C\times_X X'$ is irreducible. Such a curve exists according to the Bloch approximation method, Proposition \ref{Approx}. Set
\[
A = \{ y\in |C| \, |\, y \text{ regular}, \DG_{C'/C}(y)=\DG_{C'/C}(x_0) \}\; .
\]
Chebotarev density, Proposition \ref{Chebotarev}, shows that $D(A)\ge 1/d$. Set
\[
A' = \{ y\in |C| \, | \, y \text{ regular}, N_y\le B \}\; .
\]
By the last claim $A\subset A'$ and consequently $D(A')\ge D(A) \ge 1/d$. One-dimensional class field theory, Theorem \ref{OneClass},
gives a canonical abelian Galois covering  $C_U\to \tilde C$ whose Galois group $G$ is naturally isomorphic to a subgroup of $C(X)/U$.
Then $$A'=\{y\in |C| \, | \, y \text{ regular}, Frob_y\in  _B\!\!G  \}\; ,$$ where $Frob_y$ is the Frobenius in $G$
and $ _B G$ is the $B$-torsion subgroup of $G$. 
 Assume for the moment that $N_x> B\, d$; this will lead to a contradiction.
It would imply $\# G/ _B G > d$. But then the Chebotarev density theorem applied to $C_U \to \tilde C$ would give
$D(A')<1/d$, which is a contradiction to $D(A')\ge 1/d$.
\end{proof}

{\bf General case.}
The finiteness theorem holds without the assumption that the $N_x$ are powers of some prime.\\[2mm]
Remember that we have reduced the theorem to the case of a fibration $\pi: X\to W$ and a section $s:W\to X$ such that $s_*^{-1}(U)=C(W)$.
For a prime $l$ let $U_l\le C(X)$ be the smallest subgroup containing $U$ such that $C(X)/U_l$ is $l$-primary. 
By the special case treated above $C(X)/U_l$ is finite. 
As $C(X)/U$ is torsion we have
\[
C(X)/U = \oplus_l C(X)/U_l\; ,
\]
so that is suffices to show that for almost all primes $l$ the group $C(X)/U_l$  is trivial. But
Theorem \ref{WeakEx} shows that $C(X)/U_l$ is isomorphic to a quotient of $\ker[ \p(X_{\mathbf{k}(W)}) \to \p( \mathbf{k}(W))]$ and this group
is finite according to Proposition \ref{Katz-Lang}. This finishes the proof of the finiteness theorem.\end{proof}
\begin{coro}
The Chow group of zero cycles $CH_0(X)$ is finite.
\end{coro}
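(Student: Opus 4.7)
The plan is to realize $CH_0(X)$ directly as a quotient $C(X)/\bar U$ by an open subgroup, and then invoke the Finiteness Theorem~\ref{Finiteness}. Unpacking definitions, $CH_0(X)$ is the cokernel of the divisor map $\bigoplus_C \mathbf{k}(C)^\times \to \bigoplus_{x \in |X|} \mathbb{Z}$ sending $f \in \mathbf{k}(C)^\times$ to $\sum_{x \in |C|} \mathrm{ord}_x(f)\,[x]$, with $C$ running over curves on $X$. Projecting $I(X)=\bigoplus_{x\in |X|}\mathbb{Z}\oplus \bigoplus_{C,v\in C_\infty}\mathbf{k}(C)_v^\times$ onto its first summand, the composite $\bigoplus_C \mathbf{k}(C)^\times \to I(X) \to \bigoplus_{x\in |X|}\mathbb{Z}$ is precisely this divisor map, because by definition $C_\infty$ consists exactly of those places of $\mathbf{k}(C)$ that do not lie over closed points of $C$. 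Hence $CH_0(X)$ is naturally the quotient $C(X)/\bar U$, where $\bar U$ denotes the image in $C(X)$ of $U_0 := \bigoplus_{C,v\in C_\infty}\mathbf{k}(C)_v^\times \le I(X)$.

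The only point requiring attention is that $\bar U$ is open in $C(X)$. Since the quotient map $q\colon I(X)\to C(X)$ is open, it suffices to verify that $U_0$ is open in $I(X)$. But $\bigoplus_{x\in |X|}\mathbb{Z}$ carries the discrete topology, so $\{0\}$ is open there, and consequently $U_0 = \{0\}\oplus U_0$ is an open subgroup of the direct sum $I(X)$.

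Once openness of $\bar U$ is in hand, Theorem~\ref{Finiteness} gives $[C(X):\bar U]<\infty$, so $CH_0(X)=C(X)/\bar U$ is finite. I do not expect any genuine obstacle here: after aligning definitions, the corollary reduces to the purely formal topological observation that the zero-cycle summand of $I(X)$ is discrete, and the real content lies entirely in the Finiteness Theorem proved in the previous section.
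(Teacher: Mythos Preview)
Your proof is correct and follows exactly the paper's approach: the paper's one-line argument simply asserts that there is a natural surjective homomorphism $C(X)\to CH_0(X)$ with open kernel, which is precisely what you have verified in detail by exhibiting the kernel as the image of the open subgroup $U_0=\bigoplus_{C,\,v\in C_\infty}\mathbf{k}(C)_v^\times\le I(X)$. The substance in both cases is entirely in Theorem~\ref{Finiteness}.
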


\begin{proof} There exists a natural surjective homomorphism $C(X) \to CH_0(X)$ with open kernel. \end{proof}

\begin{rem}
One can develop an analogous class field theory of smooth varieties over finite fields, see~\cite{Wclass} and~\cite{KeSch}, and can prove the finiteness of the tame class group
in this context by very similar methods. Together with the above finiteness theorem this reproves a result of Bloch and Kato-Saito which says that $CH_0(X)$ is finitely generated 
if $X$ is a scheme of finite type over $\mathbb{Z}$.
\end{rem}

\section{Fundamental theorems}

\noindent The following theorems comprise the essential features of higher class field theory of arithmetic schemes.

\begin{theo}\label{Main}
For a regular arithmetic scheme $X$ the sequence
\[
0\lr C(X)^0 \lr C(X) \stackrel{\rho}{\lr} \p (X) \lr 0
\]
is a topological short exact sequence. Here $C(X)^0$ is the connected component in $C(X)$.
\end{theo}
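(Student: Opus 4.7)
The plan is to deduce Theorem~\ref{Main} directly from the three principal results of the preceding sections. By the Finiteness Theorem~\ref{Finiteness} every open subgroup $U\le C(X)$ has finite index, and hence $C(X)/U$ has finite exponent; the Weak Existence Theorem~\ref{WeakEx} then produces a unique open $V\le\p(X)$ with $\rho^{-1}(V)=U$. Combined with the continuity of $\rho$ this gives a bijection between open subgroups of $C(X)$ and open subgroups of $\p(X)$, and by the Isomorphism Theorem~\ref{Iso} the induced maps on finite quotients $C(X)/U\to\p(X)/V$ are isomorphisms.

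From this setup the kernel identification $\ker\rho=C(X)^0$ is immediate. The continuity of $\rho$ and total disconnectedness of the profinite group $\p(X)$ force $C(X)^0\subseteq\ker\rho$. Conversely, any $c\in\ker\rho$ lies in $\rho^{-1}(V)$ for every open $V\le\p(X)$, hence by the bijection in every open subgroup of $C(X)$, so Proposition~\ref{Intersec} places $c$ in $C(X)^0$.

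The induced continuous injection $\bar\rho\colon C(X)/C(X)^0\hookrightarrow\p(X)$ has dense image by Proposition~\ref{Dense}. The correspondence of open subgroups together with the isomorphism of finite quotients shows that $\bar\rho$ identifies a neighborhood base of $0$ in the source with the collection of intersections of the image with the corresponding open subgroups $V\le\p(X)$, making $\bar\rho$ a topological embedding.

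The main obstacle is to upgrade this dense topological embedding to a surjection. The approach is as follows. Given $g\in\p(X)$ and a cofinal descending sequence $V_1\supseteq V_2\supseteq\cdots$ of open subgroups of $\p(X)$ with trivial intersection, each coset $\rho^{-1}(gV_n)$ is non-empty since $\rho$ surjects onto the finite quotient $\p(X)/V_n$. One constructs a compatible system of representatives $c_n\in\rho^{-1}(gV_n)$ with $c_{n+1}-c_n\in\rho^{-1}(V_n)$ inductively, using that the restricted map $\rho^{-1}(V_n)\twoheadrightarrow V_n/V_{n+1}$ is surjective, which is a consequence of the Isomorphism Theorem applied to the pair $\rho^{-1}(V_{n+1})\le\rho^{-1}(V_n)$. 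The sequence $(c_n)$ is then Cauchy in the profinite uniform structure on $C(X)/C(X)^0$ whose completion is, through the above bijection of open subgroups, exactly $\p(X)$; the essential remaining point---which I expect to be the hard technical step---is to verify that $(c_n)$ actually converges in $C(X)/C(X)^0$, i.e.\ that $C(X)/C(X)^0$ is already complete, so that the resulting limit $c$ provides the desired preimage of $g$.
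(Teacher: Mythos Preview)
Your identification of $\ker\rho=C(X)^0$ and the topological-embedding argument are correct and match the paper's reasoning: Finiteness plus Weak Existence give a bijection between open subgroups, and Proposition~\ref{Intersec} then pins down the kernel. (A small remark: you do not actually need the Isomorphism Theorem for the quotient isomorphisms $C(X)/U\cong\p(X)/V$; injectivity follows from $\rho^{-1}(V)=U$ and surjectivity from density of $\im\rho$ plus finiteness of the target.)

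The genuine gap is exactly where you flag it: you have reduced surjectivity to the completeness of $C(X)/C(X)^0$ in the filtration by open subgroups, but you have given no argument for that completeness. Note that since the open subgroups all have finite index and form a neighborhood base of $0$, the Hausdorff quotient $C(X)/C(X)^0$ sits densely inside the profinite group $\varprojlim C(X)/U$, and completeness is equivalent to this inclusion being an equality, i.e.\ to $C(X)/C(X)^0$ being \emph{compact}. So your Cauchy-sequence construction does not add leverage; one still needs an independent compactness argument.

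The paper supplies this via Katz--Lang finiteness (Proposition~\ref{Katz-Lang}). One first reduces to the case where $X$ admits a smooth surjection $\pi:X\to U$ onto an open subscheme of some $\mathrm{Spec}(\mathcal{O}_K)$ together with a section $s:U\to X$ (achieved by passing to an \'etale neighborhood of the generic point as in Proposition~\ref{ElFib}; the resulting cokernel is finite by Theorem~\ref{Finiteness}, and density from Lemma~\ref{Predense} handles the open immersion). In that fibered situation one compares the two rows
\[
\xymatrix{
0 \ar[r] & \ker_1 \ar[r]\ar[d] & C(X)/C(X)^0 \ar[r]\ar[d]^{\bar\rho} & C(U)/C(U)^0 \ar[r]\ar[d]^{\wr} & 0\\
0 \ar[r] & \ker_2 \ar[r] & \p(X) \ar[r] & \p(U) \ar[r] & 0
}
\]
where the right vertical map is an isomorphism by one-dimensional class field theory (Proposition~\ref{OneClass}) and $\ker_2$ is finite by Katz--Lang. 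Since $\bar\rho$ is injective, $\ker_1$ is finite too, and the section $s$ splits the top row, giving $C(X)/C(X)^0\cong \ker_1\oplus C(U)/C(U)^0$, which is compact. This is the missing ingredient in your proposal; without invoking Katz--Lang (or something of comparable strength) there is no a priori reason for your Cauchy sequence $(c_n)$ to converge inside $C(X)/C(X)^0$.
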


\begin{theo}\label{Main2}
If $X$ is an arithmetic scheme the connected component of the class group $C(X)^0$ has the following characterizations:
\begin{enumerate}
\item[\rm (i)]{ It is the intersection of all open subgroups of $C(X)$. }
\item[\rm (ii)]{ It is the closure of the image of $I(X)^0\to C(X)$.}
\end{enumerate}
If $X$ is regular we have:
\begin{enumerate}
\item[\rm (iii)]{ It is the group of universal norms in $C(X)$.}
\end{enumerate}
If $X$ is regular and furthermore all vertical curves on $X$ are proper:
\begin{enumerate}
\item[\rm (iv)]{  It is the set of divisible elements of $C(X)$.}
\item[\rm (v)]{ It is the maximal divisible subgroup of $C(X)$.}
\end{enumerate}
\end{theo}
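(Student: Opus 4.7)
The plan is to deduce all five characterizations from results already established in the paper: Proposition \ref{Intersec}, the Isomorphism Theorem \ref{Iso}, the Weak Existence Theorem \ref{WeakEx}, the Finiteness Theorem \ref{Finiteness}, the divisibility Theorem \ref{ConCom}, and the fundamental short exact sequence of Theorem \ref{Main}.

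Parts (i) and (ii) require no new work: they are exactly the content of Proposition \ref{Intersec}, which was proved without any regularity or properness hypothesis and applies to an arbitrary arithmetic scheme.

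For part (iii), let $N(X)=\bigcap_{\phi\colon Y\to X}\phi_* C(Y)$, where $\phi$ ranges over all finite \'etale coverings. I would first show that the open subgroups of $C(X)$ are exactly the subgroups of the form $\phi_* C(Y)$ for $\phi$ an abelian Galois covering. For one direction, if $U\le C(X)$ is open, then by Theorem \ref{Finiteness} the quotient $C(X)/U$ is finite, so by Theorem \ref{WeakEx} there is an open $V\le \p(X)$ with $\rho^{-1}(V)=U$; the subgroup $V$ corresponds to an abelian Galois covering $\phi\colon Y\to X$, namely $V=\phi_*\p(Y)$, and then the Isomorphism Theorem identifies $U=\rho^{-1}(\phi_*\p(Y))$ with $\phi_* C(Y)$. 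Conversely, for any abelian Galois $\phi$, openness of $\phi_* C(Y)$ is part of the proof of Theorem \ref{Iso}. To pass from abelian Galois covers to arbitrary finite \'etale covers, one observes that for a finite \'etale $\phi\colon Y\to X$ with Galois closure $\phi'\colon Y'\to X$ one has $\phi'_* C(Y')\subset\phi_* C(Y)$, and for a finite Galois $\phi$ the Isomorphism Theorem gives $\phi_* C(Y)=\phi^{\mathrm{ab}}_* C(Y^{\mathrm{ab}})$ where $Y^{\mathrm{ab}}\to X$ is the maximal abelian sub-cover. Hence $N(X)$ equals the intersection over all open subgroups of $C(X)$, which by (i) is $C(X)^0$.

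For parts (iv) and (v), I would combine Theorem \ref{ConCom} with Theorem \ref{Main}. Under the hypothesis that all vertical curves on $X$ are proper, Theorem \ref{ConCom} states that $C(X)^0$ is divisible, which gives the inclusion $C(X)^0\subset\{$divisible elements$\}$ and at the same time proves the divisibility claim in (v). For the reverse inclusion, note that $\p(X)$ is profinite and hence contains no nontrivial divisible element; therefore any divisible $c\in C(X)$ satisfies $\rho(c)=0$, so by the exactness of $0\to C(X)^0\to C(X)\to\p(X)\to 0$ in Theorem \ref{Main} we have $c\in C(X)^0$. This proves (iv), and since $C(X)^0$ is itself divisible, (v) follows: any divisible subgroup is contained in the set of divisible elements, which equals $C(X)^0$.

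The main obstacle is the identification in (iii) of open subgroups of $C(X)$ with norm subgroups $\phi_* C(Y)$ of abelian Galois covers. This requires the full strength of the package Iso\,+\,Weak Existence\,+\,Finiteness; the remaining parts are then essentially bookkeeping around the short exact sequence in Theorem \ref{Main} and the divisibility statement in Theorem \ref{ConCom}.
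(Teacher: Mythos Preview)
Your proposal is correct and follows essentially the same approach as the paper. The only difference is one of packaging in part (iii): the paper dispatches it in one line by citing Theorem~\ref{Main} (whose proof already absorbs the Finiteness and Weak Existence theorems to give the bijection between open subgroups of $C(X)$ and of $\p(X)$), whereas you unpack this bijection explicitly via Theorems~\ref{Iso}, \ref{WeakEx}, and \ref{Finiteness}; for (i), (ii), (iv), (v) your argument matches the paper's essentially verbatim.
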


\noindent {\em Proof of Theorem~\ref{Main}.} The weak existence theorem, Theorem~\ref{WeakEx}, together with the finiteness theorem, Theorem \ref{Finiteness}, show that every
open subgroup of $C(X)$ is the preimage of an open subgroup in $\p (X)$. So the exactness at $C(X)$ is clear, 
because the intersection of all open subgroups
of $C(X)$ is $C(X)^0$ by  Proposition \ref{Intersec}. Moreover the global splitting
result, Proposition \ref{GlobalSplit}, shows that $\rho$ has dense image. Now in order to finish the proof we
show that $C(X)/C(X)^0$ is compact.\\[2mm]
{\bf Claim.} { If $X$ maps smoothly and surjectively onto some open subscheme $U \subset Spec(\mathcal{O}_K)$
for some number field $K$ and there exists
a section $s: U \to X$ then $C(X)/C(X)^0$ is compact.}\\[2mm]
We reduce the general case to the one treated in the claim. In fact there exists $X''$ satisfying the
assumption of the claim and an \'etale covering $\phi:X''\to X'$ where $i:X' \to X$ is some open subscheme of $X$.
Let $G$ be defined by the exact sequence
\[
C(X'')/C(X'')^0 \lr C(X')/C(X')^0 \lr G \lr 0\; .
\]
As the image of the first arrow in the sequence is open Theorem \ref{Finiteness} implies that $G$ is finite. As the
claim says that
$C(X'')/C(X'')^0$ is compact this implis that $C(X')/C(X')^0$ is compact too. From Lemma~\ref{Predense}
we known that the image of $C(X')/C(X')^0 \to C(X) /C(X)^0$ is dense and so $C(X)/C(X)^0$ is compact.

\begin{proof}[Proof of the claim.] We are in a situation where we can apply Katz-Lang finiteness, Proposition \ref{Katz-Lang}.
So consider the commutative diagram with exact rows
\[
\xymatrix{
0 \ar[r]  &  ker_1 \ar[r] \ar[d]  &  C(X)/C(X)^0  \ar[r] \ar[d]_\alpha & C(U)/C(U)^0 \ar[d]^\beta \ar[r] & 0 \\
0 \ar[r]  &  ker_2 \ar[r]  &  \p (X)  \ar[r]  & \p (U) \ar[r] & 0 
}
\]

Katz-Lang says that $ker_2$ is finite. The injectivity of $\alpha$ was shown above so that $ker_1$ is finite too.
Finally $\beta$ is an isomorhism by Proposition \ref{OneClass}, so that $C(U)/C(U)^0 $ is compact. The section $s:U\to X$ induces an isomorphism
$C(X)/C(X)^0 \cong ker_1 \oplus C(U)/C(U)^0 $ which completes the proof of the claim. \end{proof}

\begin{proof}[Proof of Theorem~\ref{Main2}]
(i) and (ii) are just Proposition~\ref{Intersec}. (iii) follows immediately from Theorem~\ref{Main}. By Theorem~\ref{ConCom}
we know that $C(X)^0$ is divisible if all vertical curves on $X$ are proper, so for (iv) and (v) we have to show that an element $\alpha \in C(X) \backslash C(X)^0 $ cannot
be divisible. In fact the image of $\alpha$ in $\p(X)$ is non-zero by Theorem~\ref{Main} and a non-zero element of a pro-finite group is not divisible.
\end{proof}

\section{Comparison with Kato-Saito class field theory}

\noindent We shortly recall Kato-Saito class field theory of arithmetic schemes and explain why Wiesend's class field
theory is stronger, i.e.~implies the main results of Kato and Saito. First of all we have to translate Wiesend's theory
from the complete world to the henselian world. For an arithmetic scheme $X$ we define $C^h(X)$ in the same
way as $C(X)$ but replacing the complete field $K_v$ by the henselian local field $K_v^h$, which is defined
as the algebraic closure of $K$ in $K_v$ if $v$ is archimedean. 
\begin{lem}\label{LemAppendix}
The natural map $\phi:C^h(X)\to C(X)$ induces a bijection between the open subgroups of $C(X)$ and the open subgroups of
$C^h(X)$.
\end{lem}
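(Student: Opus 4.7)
My plan is to establish the bijection by showing that the pullback $U\mapsto\phi^{-1}(U)$ from open subgroups of $C(X)$ to open subgroups of $C^h(X)$ is both injective and surjective. The key background fact is that $\phi$ is continuous with dense image: continuity is clear from the definitions, and density follows from Lemma~\ref{Predense}, since $\phi$ is the identity on the common summand $\bigoplus_{x\in|X|}\mathbb{Z}$, whose image is already dense in $C(X)$.

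For injectivity, if $U_1,U_2\le C(X)$ are open subgroups (hence clopen) satisfying $\phi^{-1}(U_1)=\phi^{-1}(U_2)$, then $\phi(\phi^{-1}(U_i))=\phi(C^h(X))\cap U_i$ is dense in $U_i$, so $U_i$ equals the closure in $C(X)$ of $\phi(\phi^{-1}(U_i))$. This closure depends only on the common pullback, so $U_1=U_2$.

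The heart of the proof is surjectivity, which I would reduce to a local extension principle for unit groups. Let $F=\mathbf{k}(C)_v$ be a non-archimedean completion with maximal ideal $\mathfrak{m}$, and $F^h$ its henselian subfield with maximal ideal $\mathfrak{m}^h$. For each $n\ge 1$ the natural map
\[
F^{h,\times}/(1+(\mathfrak{m}^h)^n)\lr F^\times/(1+\mathfrak{m}^n)
\]
is an isomorphism: density of $F^{h,\times}$ in $F^\times$ together with discreteness of the right-hand side forces surjectivity, and the kernel is visibly $1+(\mathfrak{m}^h)^n$. The archimedean case is handled directly, since both $F^{h,\times}$ and $F^\times$ have at most two open subgroups, in bijection via intersection. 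Consequently every continuous homomorphism from $F^{h,\times}$ to a discrete abelian group extends uniquely and continuously to $F^\times$.

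Given an open subgroup $V\le C^h(X)$, I would lift to $\tilde V\le I^h(X)$ and consider the quotient map $\pi:I^h(X)\to C^h(X)/V$ into a discrete group. The restriction of $\pi$ to each local factor has open kernel and thus, by the local principle, extends continuously to the corresponding factor of $I(X)$. By the universal property of the direct sum topology these extensions assemble into a continuous homomorphism $\tilde\pi:I(X)\to C^h(X)/V$ extending $\pi$. Since rational functions from $\bigoplus_C\mathbf{k}(C)^\times$ have identical images in $I^h(X)$ and in $I(X)$ under $\phi$, the map $\tilde\pi$ vanishes on them and descends to a continuous $C(X)\to C^h(X)/V$, whose kernel is the desired open subgroup $U\le C(X)$ with $\phi^{-1}(U)=V$. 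The main obstacle will be the local extension principle above --- particularly checking the archimedean cases and being precise about the topologies on henselian versus completed unit groups --- while the global assembly is then formal once the local input is in place.
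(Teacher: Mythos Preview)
Your proposal is correct and follows essentially the same approach as the paper: injectivity via density of the image of $\phi$, and surjectivity via the local correspondence between open subgroups of $F^{h,\times}$ and $F^\times$ through the filtration by $1+\mathfrak{m}^n$. The paper packages the surjectivity step more tersely by directly writing down $U=\phi(U^h)+\im(I(X)^0)+\sum_v\im(1+\pi_v^{n_v}\mathcal{O}_{\mathbf{k}(C)_v})$ for suitably chosen $n_v$, which is precisely the kernel of your extended quotient map $\tilde\pi$; your formulation via extending the quotient homomorphism factor by factor makes the underlying local principle more explicit but is otherwise the same argument.
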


\begin{proof}
Since $\phi$ is continuous and has dense image the natural map
\[
 \{\text{open subgroups of } C(X) \}  \lr  \{\text{open subgroups of } C^h(X)  \}
\]
is injective. So we have to show it is surjective. Let $U^h\le C^h(X)$ be an open subgroup. For every curve $C$ on $X$ and
every $v\in C_\infty^{\mathrm{na}}$ choose $n_v\in \mathbb{N}$ such that the image of $1+\pi_v^{n_v} \mathcal{O}_{\mathbf{k}(C)^h_v}$ in $C^h(X)$
is contained in $U^h$. Here $\pi_v$ is a prime element. Setting 
\[
U= \phi(U^h)+ \im (I(X)^0) + \sum_{v} \im(1+\pi_v^{n_v} \mathcal{O}_{\mathbf{k}(C)_v})
\]
we obtain $\phi^{-1} (U)= U^h$.
\end{proof}

Let $\bar{X}$ be an arithmetic scheme of dimension $d$ which is proper over $\mathbb{Z}$ and let $X\subset \bar{X}$ be a 
dense open subscheme which is smooth over $\mathbb{Z}$. For simplicity we will assume that $\mathbf{k}(X)$ 
contains a totally imaginary number field.\\
For a coherent ideal sheaf $I$ an $\bar{X}$ such that $I|_{X}= \mathcal{O}_X$ 
Kato and Saito define their cohomological class group as
\[
C_I(\bar{X})  =  H^d_{Nis}(\bar{X},\mathcal{K}^M_d(\mathcal{O}_{\bar{X}} , I ))\; .
\]
Here  $\mathcal{K}^M_d(\mathcal{O}_X, I)$ is the relative Milnor $K$-sheaf in the Nisnevich topology, defined as
\[
\mathcal{K}^M_d(\mathcal{O}_{\bar{X}}, I) =  \ker[\mathcal{K}^M_d(\mathcal{O}_{\bar{X}}) \to \mathcal{K}^M_d(\mathcal{O}_{\bar{X}} /I) ]\; .
\]
For general properties of the Milnor $K$-sheaf see \cite{Kerzklocal}.
The main result of Kato and Saito in \cite{KS} and \cite{Raskind} reads now:
\begin{theo}\label{KatoSaito}
For all $I$ as above the group $C_I(\bar{X})$ is finite and there is a natural
reciprocity isomorphism of topological groups
\[
KS(X)=\lim_{\stackrel{\longleftarrow}{I}} C_I(\bar{X} ) \tilde{\lr} \p (X)\; .
\] 
\end{theo}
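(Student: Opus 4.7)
The plan is to combine the fundamental theorems of Wiesend's theory (Theorem~\ref{Main} and Theorem~\ref{Finiteness}) together with Lemma~\ref{LemAppendix} to match up the inverse system of Kato--Saito class groups $\{C_I(\bar X)\}$ with the system of open subgroups of $C^h(X)$, and hence with the profinite group $\p(X)$. The reciprocity map will play the role of the comparison morphism.

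First, I would construct for each coherent ideal sheaf $I$ on $\bar X$ with $I|_X = \mathcal O_X$ a natural continuous surjective homomorphism $\psi_I : C^h(X) \to C_I(\bar X)$. The construction proceeds via the Gersten-type resolution of the Nisnevich Milnor $K$-sheaf $\mathcal K^M_d(\mathcal O_{\bar X}, I)$ together with the local reciprocity maps on henselian higher local fields (see \cite{Kerzklocal}). Concretely, each closed point $x \in |X|$ contributes through the unramified reciprocity on $\mathcal O_{\bar X, x}^h$, while each boundary place $v \in C_\infty^{\mathrm{na}}$ of a horizontal curve $C \subset X$ contributes through the reciprocity for the henselian higher local field $\mathbf k(C)_v^h$; the relations cutting out $C^h(X)$ map to the boundary relations that compute $H^d_{Nis}(\bar X, \mathcal K^M_d(\mathcal O_{\bar X}, I))$.

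Next, I would verify two properties. \textbf{(a)} The map $\psi_I$ is surjective with open kernel $U_I^h \le C^h(X)$, so $C_I(\bar X) \cong C^h(X)/U_I^h$ as abstract groups. \textbf{(b)} As $I$ varies, the family $\{U_I^h\}$ is cofinal in the directed set of open subgroups of $C^h(X)$: given any open $U^h \le C^h(X)$, a sufficiently small $I$ (imposing a strong enough congruence condition along $\bar X \setminus X$) satisfies $U_I^h \subset U^h$. This cofinality is essentially a local statement, saying that the congruence subgroups $K^M_d(\mathbf k(C)_v^h, m)$ form a neighborhood base of $0$ in $\mathbf k(C)_v^{h,\times}$.

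Granting (a) and (b), the finiteness assertion of the theorem follows from Theorem~\ref{Finiteness} together with Lemma~\ref{LemAppendix}: $C_I(\bar X) = C^h(X)/U_I^h$ has finite index in $C^h(X)$. For the topological isomorphism with $\p(X)$, combine Lemma~\ref{LemAppendix} (open subgroups of $C(X)$ and of $C^h(X)$ correspond bijectively) with Theorem~\ref{Main} (the reciprocity map induces a bijection between the open subgroups of $C(X)$ and the open subgroups of $\p(X)$). Passing to the inverse limit along the cofinal system $\{U_I^h\}$ yields
\[
KS(X) = \varprojlim_I C_I(\bar X) = \varprojlim_{U^h} C^h(X)/U^h = \varprojlim_V \p(X)/V = \p(X),
\]
with $V$ running over the open subgroups of the profinite group $\p(X)$; the resulting map is continuous and bijective between profinite groups, hence a topological isomorphism.

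The main obstacle is the construction of $\psi_I$ and the verification of (b). The construction itself is routine modulo the higher local Milnor $K$-theoretic formalism, but matching Wiesend's idele relations (written in terms of henselizations of function fields of curves) with the Gersten boundary relations in $H^d_{Nis}(\bar X, \mathcal K^M_d(\mathcal O_{\bar X}, I))$ is technical; in particular one needs the local reciprocity computations for higher local fields from \cite{KS, Kerzklocal} and must check compatibility of the boundary symbols at the chains $(p_0, \dots, p_n)$ running between $X$ and the boundary of $\bar X$. Once this bookkeeping is done, no further global finiteness or existence input from \cite{KS} is needed, since Theorems~\ref{Main} and~\ref{Finiteness} supply everything on the Wiesend side.
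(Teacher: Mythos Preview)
Your overall strategy matches the paper's: both arguments feed Wiesend's theory into the Kato--Saito picture via a continuous surjection $C^h(X)\to C_I(\bar X)$ with open kernel (the paper simply cites \cite[Proposition~2.9]{KS} for this, rather than reconstructing it from a Gersten resolution), then use Theorem~\ref{Finiteness} together with Lemma~\ref{LemAppendix} for finiteness of $C_I(\bar X)$, and Theorem~\ref{Main} together with Lemma~\ref{LemAppendix} to control open subgroups.

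The one substantive difference is how the isomorphism $KS(X)\cong\p(X)$ is established. The paper uses the reciprocity map $KS(X)\to\p(X)$ already constructed by Kato and Saito: it sits in a commutative triangle under $C^h(X)$, profiniteness of $KS(X)$ upgrades density of the image to surjectivity, and then the bijection between open subgroups of $C^h(X)$ and of $\p(X)$ forces every open subgroup of $KS(X)$ to be pulled back from $\p(X)$, giving injectivity. Your route instead goes through the cofinality claim~(b), identifying $\varprojlim_I C_I(\bar X)$ directly with the profinite completion $\varprojlim_{U^h} C^h(X)/U^h$. This avoids invoking the Kato--Saito reciprocity map as a black box, which is a genuine gain in self-containedness; but~(b) is not as innocuous as you suggest. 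An open subgroup of $C^h(X)$ imposes conditions at infinitely many places $v\in C_\infty$ across infinitely many curves $C$, whereas a single ideal $I$ on $\bar X$ controls congruences along the fixed boundary $\bar X\setminus X$; matching these requires unwinding the idelic presentation of $C_I(\bar X)$ from~\cite{KS} and is more than the purely local assertion about $K^M_d(\mathbf k(C)^h_v,m)$ that you cite. The paper's triangle argument sidesteps this entirely, so if you are willing to take the existence of the Kato--Saito reciprocity map as given, that route is shorter.
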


\begin{proof}
It follows from \cite[Proposition 2.9]{KS} that there is a natural continuous surjective homomorphism $ C^h(X) \to C_I(\bar{X})$
(the Kato-Saito class group $C_I(\bar{X})$ has the discrete topology).
The finiteness theorem, Theorem \ref{Finiteness}, and Lemma \ref{LemAppendix} imply the finiteness of $C_I(\bar{X})$ 
and therefore the pro-finiteness of $KS(X)$. This shows that the continuous homomorphism $KS(X) \to \p (X)$ 
defined by Kato and Saito is surjective, since we already know that the image is dense.
 The diagram
\[
\xymatrix{
C^h(X) \ar[rr]\ar[dr]_\rho & & KS(X) \ar[dl] \\
& \p (X) & 
}
\] 
commutes. 
 Finally, from this diagram, Lemma 
\ref{LemAppendix} and Theorem \ref{Main} it follows that the open subgroups of $KS(X)$ are in bijective correspondence 
with the open subgroups of $\p (X)$, so that the injectivity of $KS(X) \to \p (X)$ results. \end{proof}

\vspace{1cm}

\noindent Moritz Kerz\\
Universit\"at Duisburg-Essen\\
Fachbereich Mathematik, Campus Essen\\
45117 Essen\\
Germany

\noindent{\tt moritz.kerz@uni-due.de}

\end{document}